\documentclass[a4paper, 11pt, twoside, notitlepage]{amsart}

\usepackage{amsmath,amscd}
\usepackage{amssymb}
\usepackage{amsthm}
\usepackage{comment}
\usepackage{graphicx, xcolor}
\usepackage{subcaption}
\usepackage{mathrsfs}
\usepackage[ocgcolorlinks, linkcolor=blue]{hyperref}
\usepackage[normalem]{ulem}
\usepackage{xcolor,cancel}

\newcommand\redsout{\bgroup\markoverwith{\textcolor{red}{\rule[0.5ex]{2pt}{0.4pt}}}\ULon}

\usepackage{bm}
\usepackage{bbm}
\usepackage{url}

\usepackage[utf8]{inputenc}
\usepackage{mathtools,amssymb}
\usepackage{esint}
\usepackage{tikz}
\usepackage{dsfont}
\usepackage{relsize}
\usepackage{url}
\usepackage{xcolor}
\usepackage{graphicx}
\usepackage{mathrsfs}
\usepackage[shortlabels]{enumitem}
\usepackage{lineno}
\usepackage{amsmath}
\usepackage{enumitem}
\usepackage{amsthm} 
\usepackage{verbatim}
\usepackage{dsfont}
\numberwithin{equation}{section}

\allowdisplaybreaks
 
\def\embed{\hookrightarrow}
\newsavebox{\mycases}

\mathtoolsset{showonlyrefs}

\graphicspath{{images/}}

\newtheorem{theorem}{Theorem}[section]
\newtheorem{lemma}[theorem]{Lemma}
\newtheorem{definition}[theorem]{Definition}
\newtheorem{corollary}[theorem]{Corollary}

\newtheorem{proposition}[theorem]{Proposition}

\newtheorem{remark}[theorem]{Remark}
\newtheorem{assumption}[theorem]{Assumption}
\newtheorem{example}[theorem]{Example}

\def\qda{q^\delta_{h,\alpha}}

\title[]{Finite element error analysis for elliptic parameter identification with power-type nonlinearity}

\author[D.-H. Chen]{De-Han Chen}
\address{School of Mathematics and Statistics, Central China Normal University. Wuhan, China}
\email{dhchen@ccnu.edu.cn}

\author[Y.-H. Lin]{Yi-Hsuan Lin}
\address{Department of Applied Mathematics, National Yang Ming Chiao Tung University, Hsinchu, Taiwan}
\email{yihsuanlin3@gmail.com}

\author[I. Yousept]{Irwin Yousept$^{\dagger}$}
\address{Universit\"at Duisburg-Essen, Essen, Germany}
\email{irwin.yousept@uni-due.de}

\thanks{$^{\dagger}$Corresponding author.}

\keywords{numerical parameter reconstruction, elliptic equations with power-type nonlinearity, finite element approximations, conditional stability estimates, a priori error estimates}
\subjclass[2020]{}

\newcommand{\R}{{\mathbb R}}

\newcommand{\N}{{\mathbb N}}

\newcommand{\eps}{\epsilon}

\newcommand {\p} {\partial}

\newcommand{\LC}{\left(}
\newcommand{\RC}{\right)}
\newcommand{\wt}{\widetilde}

\newcommand{\norm}[1]{\lVert #1 \rVert}
\newcommand{\abs}[1]{\left\lvert #1 \right\rvert}
\DeclareMathOperator{\dist}{dist} 

\begin{document}

	\maketitle
	
	\begin{abstract}
This paper studies the numerical analysis of a parameter identification problem governed by elliptic equations with power-type nonlinearity. We propose a numerical reconstruction via a suitable least-squares minimization problem based on piecewise linear finite elements. As one of our main novelties, we establish conditional stability estimates at the continuous level, which form the theoretical foundation of the present finite element analysis. Our stability analysis relies on tailored analytical tools, including Hardy-type inequalities, fractional Gagliardo–Nirenberg inequalities, and weighted spaces with singular distance weights. By invoking the achieved conditional stability together with the Carstensen quasi-interpolation operator and associated estimates in negative Sobolev spaces, we derive a priori error estimates for the proposed finite element approximation in terms of the mesh size, the regularization parameter, the noise level, and the nonlinearity exponent. Our results extend the recent stability and error estimates for the linear case by Jin et al. \cite{jin2022convergence} and sharpen their error estimates and convergence order under weaker regularity assumptions.

	\end{abstract}
	

	\section{Introduction}\label{sec: introduction}
    
	This paper examines the error analysis of a finite element   method (FEM) for a parameter identification problem governed by an elliptic partial differential equation (PDE) involving power-type nonlinearity as follows:
	\begin{equation}\label{eq: main}
		\begin{cases}
			-\nabla \cdot (\sigma\nabla u)+ qu^m =f &\text{ in }\Omega \\  
			u  =0 &\text{ on }\p \Omega
		\end{cases}
	\end{equation}
	for a bounded Lipschitz domain $\Omega\subset \R^n$ with $n \ge 2$ and an odd natural number $m \ge 1$. Under this nonlinear PDE-model, we focus on the ill-posed inverse problem for the reconstruction of the zero-order coefficient $q$ in \eqref{eq: main}. To be more precise, given $\sigma$ and $f$ satisfying certain regularity conditions (see Assumption \ref{Assump: regularity}) and noisy measurement  data $y^\delta$ for the unknown true weak solution $u^\dag$ of \eqref{eq: main}, we numerically reconstruct   the unknown true coefficient $q^\dagger$ based on piecewise linear FEM, by solving the following  least squares minimization problem:
    \begin{equation}\label{minimzation intro}
    \begin{aligned}
        &\textrm{Minimize } \frac{1}{2}\big\|u_h(q_h)-y^\delta\big\|^2_{L^2(\Omega)}+\frac{\alpha}{2}\big(\|q_h\|_{L^2(\Omega)}^2+\|\nabla q_h\|^2_{L^2(\Omega)}\big) \\
	&\textrm{subject to } q_h \in S_h \textrm{ and } \underline q \le q_h  \le \overline q. 
    \end{aligned} 
       \end{equation}
In the setting of \eqref{minimzation intro}, $ S_h \subset H^1(\Omega)$ stands for the standard piecewise linear finite element space  (see \eqref{FEM}), $u_h(q_h)$ the associated finite element approximation of the forward problem (see \eqref{numerical:op} for its precise definition), $\alpha>0$ the Tikhonov regularization parameter, and $0 \le \underline q \le \overline q$ some prescribed lower and upper bounds for the feasible zero-order coefficient. Our goal is to establish rigorous error estimates for the reconstruction error both in the coefficient and in the corresponding weak PDE-solution with respect to the discretization parameter $h$, the regularization parameter $\alpha$,  the noisy level $\delta$, as well as the nonlinearity exponent $m$.

Existing contributions to such a parameter identification problem are predominantly devoted to the linear forward equations  (see \cite{CJZ20, EKN1989_convergence, EZ2000_new, HQ2010_convergence, KJ2011_new, JFZ2012}), focusing primarily on the convergence rate of the Tikhonov regularization at the continuous level.  To obtain a computationally tractable formulation,  as we propose in \eqref{minimzation intro}, the regularized output least-squares minimization problem is discretized using  FEM due to its adaptability to complex geometries and low regularity. The finite element discretization, however,  leads to additional numerical errors in the reconstruction, making a rigorous a priori error analysis essential for the accuracy of the computed approximation.  The plain finite element convergence, without error estimates,  has been widely studied,  see, e.g., \cite{mchen99,bangti10} for elliptic inverse coefficient problems, \cite{bangti12,keu98,xie05} for
 parabolic inverse coefficient problems, and \cite{feng12} for Maxwell inverse coefficient problems. However, results regarding a priori error estimates for the reconstruction error remain limited. 
 
Wang and Zou \cite{wang10} were the first to initiate the finite element error analysis and derive weighted-norm estimates for elliptic and parabolic inverse conductivity problems. Their analysis, however, requires full-time measurements and offers no standard estimates in $L^2$-norm. Jin and Zhou \cite{bangti21} extended the developed results \cite{wang10} to a more general setting, allowing measurements on a subinterval $[T-\sigma, T]$ and reducing the underlying regularity assumptions. More recently, Jin et al. \cite{jin2022convergence} rigorously analyzed the finite element reconstruction of $q$ in \eqref{eq: main} for the linear case $m=1$ and its linear parabolic counterpart. Their major contributions include conditional stability estimates at the continuous level and finite element error estimates with respect to the noisy level $\delta$, the regularization parameter $\alpha$, and the mesh size $h$. In particular, they introduced a novel tailored test function, different from \cite{wang10}
and \cite{bangti21},   serving as the theoretical basis for deriving    error estimates in the standard $L^2$-norm and the interior
$L^2$-norm for the finite element solutions.

To the best of our knowledge, this paper is the first to address and explore finite element estimates in the nonlinear case \eqref{eq: main}. Although the linear case $m=1$  has readily been investigated in \cite{jin2022convergence}, their results, however, cannot be transferred to the nonlinear case $m >1$ and require substantial extension.  Our main contributions are threefold:  First, we establish novel conditional stability estimates for \eqref{eq: main}  (see Theorem \ref{thm: cse})  using a functional-analytic framework based on weighted Sobolev spaces with singular weights, Hardy-type inequalities, and fractional Gagliardo-Nirenberg inequalities. In particular, our analysis avoids the explicit use of a nonstandard test-function required in the previous contribution \cite{jin2022convergence} for the linear case $m=1$.   Second, within this functional framework, we not only obtain stronger stability estimates than those available so far only for the linear case   \cite{jin2022convergence}, but also manage to verify the required positivity condition   (see Assumption \ref{Assump: cse}) for the nonlinear case $m>1$ (see Proposition \ref{prop: D1}).
Third, by combining the improved conditional stability with the Carstensen quasi-interpolation operator \cite{carstensen1999quasi} and suitable estimates in negative Sobolev spaces, we derive in Theorem \ref{final theo}  a priori error estimates in terms of the discretization parameter $h$, the regularization parameter \( \alpha \), the noise level \( \delta \), and the nonlinearity exponent \( m \). In particular, for the linear case $m=1$,  our framework achieves sharper error estimates than \cite{CJZ20} under the weaker regularity condition $q^\dag \in H^1(\Omega)$ in contrast to the $H^2$-regularity assumption used previously in \cite{jin2022convergence}. This relaxation significantly broadens the applicability of the proposed approach (see Remark \ref{remark:comp:FEM}).\\

	\noindent \textbf{Organization of this article.} In Section~\ref{sec: forward problem}, we review relevant function spaces as well as the well-posedness and regularity results for the semilinear elliptic equation~\eqref{eq: main}. Section~\ref{sec: convergence rates} is devoted to the establishment of the conditional stability results under appropriate assumptions. In Section~\ref{sec: finite element}, we provide the rigorous error estimates for the finite element approximation. The final section is devoted to numerical tests that illustrate the theoretical findings achieved.

	\section{Preliminaries}\label{sec: forward problem}
	Let us begin by introducing our notation. If $V$ is a normed vector space,  then $\|\cdot\|_{V}: V \to \R_+\cup \{0\}$ stands for the norm used in the space $V$, and $\langle \cdot,\cdot\rangle_{V^*,V}$  denotes  the duality pairing with the corresponding dual space $V^*$. If $V$ is a Hilbert space, then $(\cdot,\cdot)_V$ and $\|\cdot\|_{V}$ denote, respectively, its scalar product and the induced norm. For the special case $V = \R^n$,  we use the Euclidean scalar product and the Euclidean norm that are denoted by a dot and by $|\cdot|:= \|\cdot\|_{\R^n}$.   
    
    Now, for given $s\in [0,\infty)$ and  $p\in [1,\infty]$, we denote by 
	$W^{s,p}(\Omega)$ the conventional Sobolev space  (see, e.g., \cite[Definition 8.10.10]{bhattacharyya2012distributions}).  The conjugate exponent of $p$ is denoted by $p'$, i.e., $\frac{1}{p} +\frac{1}{p'}=1$.   We recall that, in the case of $1\leq p < \infty$, the Sobolev norm $\|\cdot\|_{{W}^{s,p}(\Omega)}$ is given by
	\begin{equation}\label{Wsp:norm}
		\|u\|_{{W}^{s,p}(\Omega)} := \begin{cases}
			\big( \sum_{|\beta|\leq s}\|D^\beta u\|_{L^p(\Omega)}^p\big)^{\frac{1}{p}} \quad  \text{ if }\  s\in \mathbb N\\ 
			\big( \sum_{|\beta|\leq m}\|D^\beta u\|_{L^p(\Omega)}^p  +\sum_{|\beta|= m}         \int_{\Omega}\int_\Omega\frac{|D^\beta u(x)-D^\beta u(y)|^p}{|x-y|^{n+\sigma p}}\, dxdy\big)^{\frac{1}{p}} \\
			\qquad \qquad \qquad \qquad \qquad \text{ if }\  s= m+\sigma \text{ for }m \in \mathbb N \text{ and } \sigma\in (0,1)
		\end{cases}   
	\end{equation}
	where $D^\beta u$ denoting the  weak partial derivative of $u$ with multi-index $\beta\in (\N \cup \{0\})^n$. Similarly,   $W^{s,p}(\Omega)^{n\times n}$ denotes  the Sobolev space of $\R^{n\times n}$-valued functions. 
	Furthermore, 
	$C_0^\infty(\Omega)$ stands for the set of all smooth functions $\varphi$ with a compact support $\textup{supp}\,\varphi$ contained in $\Omega$. By $W_0^{s,p}(\Omega)$, we denote the closure of $C_0^\infty(\Omega)$ with respect to the norm $\|\cdot\|_{W^{s,p}(\Omega)}$. Finally,  ${W}^{-s,p}(\Omega)$ stands for the dual space of  $W_0^{s,p'}(\Omega)$.

	\subsection{Well-posedness}
	Let us discuss the well-posedness for the forward problem \eqref{eq: main} as well as the associated maximum and comparison principles. We begin by formulating the standing assumption for \eqref{eq: main}:
	\begin{assumption}\label{Assump: regularity} Let $\Omega \subset \R^n$ with $n \ge 2$ be a bounded Lipschitz domain, $m=2l+1$ with $l\in \N\cup\{0\}$, $r > n/2$, $f \in L^r(\Omega)$,   and  $\sigma=\LC\sigma_{ij}(x)\RC _{1\leq i,j\leq n}\in L^\infty(\Omega)^{n\times n}$ satisfying the following symmetric and elliptic conditions:   
		\begin{equation}\label{ellipticity}
			\begin{cases}
				\underline{\lambda}\,|\xi|^2 \le \sigma(x) \xi \cdot \xi  \le  \overline{\lambda}|\xi|^2   &\text{ for  all }\xi \in \R^n \textrm{ and a.e. } x \in \Omega\\
				\sigma_{ij}(x)=\sigma_{ji}(x) &\text{ for all }i,j=1,\ldots, n
			\end{cases}
		\end{equation} 
		for some fixed constants $0 < \underline \lambda \le \overline \lambda <\infty$, independent of $\xi$ and $x$. 
	\end{assumption}

	\begin{lemma}[{\cite[Theorem 1]{Meyer63_Lp} and \cite[Theorem 0.5]{JK95_Dirichlet}}]\label{lemma: Meyer-Jerison-Kenig}
		Let Assumption \ref{Assump: regularity}  be satisfied. Then, there exists a real number $\hat p>2$ such that for every $p \in (\hat p',\hat p)$ the linear operator  
		\begin{equation}
			\begin{split}
				-\nabla\cdot\sigma \nabla  : W^{1,p}_0(\Omega) \to W^{-1,p}(\Omega)
			\end{split}
		\end{equation}
		is a topological isomorphism. 
	\end{lemma}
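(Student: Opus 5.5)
The plan follows the classical Meyers perturbation argument: establish the isomorphism at $p=2$, then extend it to an open interval around $2$ using complex interpolation and the stability of invertibility under small perturbations.

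\textbf{Step 1: the case $p=2$.} Here the statement is a direct consequence of the Lax--Milgram lemma. By the ellipticity bounds \eqref{ellipticity}, the bilinear form $(u,v)\mapsto\int_\Omega \sigma\nabla u\cdot\nabla v\,dx$ is bounded and coercive on $H^1_0(\Omega)$, the coercivity following from the Poincaré inequality on the bounded domain $\Omega$. Hence $A:=-\nabla\cdot\sigma\nabla\colon H^1_0(\Omega)\to H^{-1}(\Omega)$ is an isomorphism.

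\textbf{Step 2: a reference operator.} Next we extend the result to $p$ near $2$. Write
\[
A=\tfrac{\overline\lambda}{1}\big(-\Delta-\nabla\cdot(\overline\lambda^{-1}\sigma-I)\nabla\big)=\overline\lambda\,(-\Delta)\big(I-T\big),
\qquad T:=(-\Delta)^{-1}\nabla\cdot(\overline\lambda^{-1}\sigma-I)\nabla .
\]
The ellipticity gives the pointwise bound $|I-\overline\lambda^{-1}\sigma|\le 1-\underline\lambda/\overline\lambda=:k<1$. The key ingredient is that the Dirichlet Laplacian $-\Delta\colon W^{1,p}_0(\Omega)\to W^{-1,p}(\Omega)$ is an isomorphism for $p$ in an open interval around $2$ on a bounded Lipschitz domain. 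This is exactly the Jerison--Kenig result \cite[Theorem 0.5]{JK95_Dirichlet}, which gives the interval $(3/2-\varepsilon,3+\varepsilon)$ for $n\ge3$ and a similar interval for $n=2$. Granting this, we denote by $C_p$ the norm of the map
\[
L^p(\Omega)^n\ni F\mapsto \nabla(-\Delta)^{-1}\nabla\cdot F\in L^p(\Omega)^n .
\]

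\textbf{Step 3: the Neumann series.} Since $C_2=1$ (the operator is an orthogonal projection in $L^2$), the Riesz--Thorin theorem shows that $p\mapsto C_p$ is continuous near $2$. Hence there is $\hat p>2$ such that $kC_p<1$ for all $p\in(\hat p',\hat p)$. For such $p$, measuring $W^{1,p}_0$ by $\|\nabla u\|_{L^p}$, we get $\|T\|_{W^{1,p}_0\to W^{1,p}_0}\le kC_p<1$. The Neumann series then shows that $I-T$ is invertible on $W^{1,p}_0(\Omega)$, and composing with the Laplace isomorphism makes $A$ a topological isomorphism $W^{1,p}_0(\Omega)\to W^{-1,p}(\Omega)$.

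\textbf{Main obstacle.} The delicate point is Step 2, the $W^{1,p}$-isomorphism of the Laplacian on a merely Lipschitz domain. On smooth domains it follows from Calderón--Zygmund theory, but on Lipschitz domains it requires the harmonic-analysis input of \cite{JK95_Dirichlet}. I would cite that result rather than reprove it. The remaining steps are routine functional analysis, with the continuity of $C_p$ at $p=2$ handled by interpolating between two exponents $p_0<2<p_1$ inside the Jerison--Kenig range.
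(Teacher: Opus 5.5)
Your argument is correct, and it is the standard way the two results cited in the paper fit together. The paper gives no proof beyond citing Meyers (perturbation from $p=2$ using $C_2=1$, interpolation, and a Neumann series) and Jerison--Kenig (the $W^{1,p}_0\to W^{-1,p}$ isomorphism for the Laplacian on Lipschitz domains), and the steps you leave implicit are routine: consistency of $(-\Delta)^{-1}$ across exponents for the interpolation, real-versus-complex constants in Riesz--Thorin, and shrinking the neighbourhood of $2$ to the form $(\hat p',\hat p)$.
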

	
	\begin{lemma}[Well-posedness]\label{thm: well-posed N}
		Let Assumption \ref{Assump: regularity}  hold and   $q \in L^r(\Omega)$ be nonnegative. Then,    the forward problem \eqref{eq: main} admits a unique weak solution $u\in H^1_0(\Omega)\cap C(\overline \Omega)$ satisfying
		\begin{equation}\label{est: well 0}
			\|u\|_{H^1(\Omega)}+ \norm{u}_{C(\overline\Omega)} \leq C \norm{f}_{L^r(\Omega)}
		\end{equation}
		with a constant $C>0$  independent of $u$, $f$, and $q$. Furthermore, there exists an $\overline p \in (2,\infty]$ such that the weak solution satisfies the regularity  $u \in W^{1,\overline p}(\Omega)$   and  
		\begin{equation}\label{est: well 1}
			\norm{u}_{W^{1,\overline p}(\Omega)}\leq C \big(\norm{f}_{L^r(\Omega)}+ \|q\|_{L^r(\Omega)}\norm{f}_{L^r(\Omega)}^m \big)
		\end{equation} 
		with a constant $C>0$  independent of $u$, $f$, and $q$. Moreover, if $\Omega$ is convex, $r>n$, and   $\sigma  \in W^{1,\omega}(\Omega)^{n\times n}$ for some $\omega>n$, then the proceeding regularity result and \eqref{est: well 1} hold with $\overline p=\infty$.   If additionally $\Omega$ is   convex or of class $C^{1,1}$ and $\sigma  \in W^{1,\infty}(\Omega)^{n\times n}$, then the weak solution satisfies the additional regularity  $u \in H^2(\Omega)$   and     
		\begin{equation}\label{est: well 2}
			\begin{split}
			    \|u(q)\|_{H^2(\Omega)}\leq C \big(\|f\|_{L^2(\Omega)}+\|q\|_{L^2(\Omega)}\norm{f}_{L^r(\Omega)}^m\big) 
			\end{split}
		\end{equation} 
		with a constant $C>0$  independent of $u$, $f$, and $q$.

	\end{lemma}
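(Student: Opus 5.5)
Existence and uniqueness of the weak solution come from the theory of monotone operators; the remaining claims follow from Stampacchia truncation, Meyers' estimate (Lemma \ref{lemma: Meyer-Jerison-Kenig}) and classical $W^{2,p}$/$H^2$ theory, each applied after moving the nonlinear term to the right-hand side.

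\emph{Step 1 (existence, uniqueness, $H^1$-bound).} Define $A:H^1_0(\Omega)\to H^{-1}(\Omega)$ by $\langle A u,v\rangle=\int_\Omega \sigma\nabla u\cdot\nabla v+\int_\Omega q u^m v$. Since $m$ is odd and $q\ge0$, the map $t\mapsto qt^m$ is monotone, so $A$ is strictly monotone and coercive by \eqref{ellipticity}. However, $qu^m v$ need not be integrable for general $u\in H^1_0$ when $q\in L^r$ and $m$ is large. I would therefore truncate the nonlinearity to $q\,T_k(u)^m$, with $T_k$ the cut-off at level $k$. The truncated operator is hemicontinuous, bounded, monotone and coercive, so Minty--Browder gives a solution $u_k$. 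Testing with $u_k$ and dropping the nonnegative term $\int q T_k(u_k)^m u_k$ gives $\|u_k\|_{H^1}\le C\|f\|_{H^{-1}}\le C\|f\|_{L^r}$, because $r>n/2$ implies $L^r\hookrightarrow H^{-1}$.

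\emph{Step 2 (the $L^\infty$ bound, the key step).} For $k_0\ge0$, test the truncated equation with $(u_k-k_0)^+$. On the set $\{u_k>k_0\}$ the term $qT_k(u_k)^m(u_k-k_0)^+$ is nonnegative, so it can be dropped. This leaves exactly the linear Stampacchia setting with right-hand side $f\in L^r$, $r>n/2$, and the De Giorgi--Stampacchia iteration yields $\sup u_k\le C\|f\|_{L^r}$. The same argument applied to $(-u_k-k_0)^+$ bounds $\inf u_k$ from below. The constant depends only on $\Omega,\underline\lambda,r$ and not on $q$ or $k$; this $q$-independence is the crucial point of \eqref{est: well 0}. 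Choosing $k>C\|f\|_{L^r}$ makes the truncation inactive, so $u=u_k$ solves \eqref{eq: main}. Uniqueness follows by subtracting two solutions and testing with their difference, using monotonicity. Continuity up to the boundary comes from De Giorgi--Nash--Moser applied to $-\nabla\cdot\sigma\nabla u=f-qu^m\in L^r$, $r>n/2$, together with the Lipschitz exterior-cone condition. I expect this step to require the most care: keeping every constant uniform in $q$ and justifying the truncation.

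\emph{Step 3 (higher regularity).} Write $-\nabla\cdot\sigma\nabla u=g:=f-qu^m$. Then $\|g\|_{L^r}\le\|f\|_{L^r}+\|q\|_{L^r}\|u\|_{L^\infty}^m\le C(\|f\|_{L^r}+\|q\|_{L^r}\|f\|_{L^r}^m)$. Since $r>n/2$, we have $L^r\hookrightarrow W^{-1,p}$ for some $p>2$. Choosing $\overline p\in(2,\hat p)$ with this embedding valid, Lemma \ref{lemma: Meyer-Jerison-Kenig} gives \eqref{est: well 1}. In the convex case with $r>n$ and $\sigma\in W^{1,\omega}$, $\omega>n$, I would invoke $W^{2,r}$ regularity for $\sigma$ in VMO (or $W^{1,\infty}$ theory) on convex domains. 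Then $W^{2,r}\hookrightarrow W^{1,\infty}$ for $r>n$ gives $\overline p=\infty$; the main subtlety is that $W^{2,r}$ estimates on merely convex domains need to be cited carefully. Finally, for convex or $C^{1,1}$ domains with $\sigma\in W^{1,\infty}$, the standard $H^2$ regularity (Grisvard) gives $\|u\|_{H^2}\le C\|g\|_{L^2}\le C(\|f\|_{L^2}+\|q\|_{L^2}\|u\|_\infty^m)$. Combined with Step 2, this yields \eqref{est: well 2}.
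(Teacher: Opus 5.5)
\textbf{Steps 1--2.} These are correct. They differ from the paper only in being self-contained. The paper simply cites \cite[Theorem 4.10]{TF_optimal_10} for existence, uniqueness and \eqref{est: well 0}. You instead reprove it by truncation, Minty--Browder, and Stampacchia's level-set argument, dropping $\int q\,T_k(u_k)^m(u_k-k_0)^+\ge 0$. This makes explicit that the constants are independent of $q$ and that merely $q\in L^r(\Omega)$ suffices, at the cost of length. Your derivations of \eqref{est: well 1} (via $f-qu^m\in L^r(\Omega)\hookrightarrow W^{-1,\overline p}(\Omega)$ and Lemma \ref{lemma: Meyer-Jerison-Kenig}) and of \eqref{est: well 2} (via Grisvard) coincide with the paper's.

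\textbf{The case $\overline p=\infty$.} Here there is a genuine gap. Your primary route is $W^{2,r}$-regularity on convex domains followed by $W^{2,r}(\Omega)\hookrightarrow W^{1,\infty}(\Omega)$. For $r>n$ this regularity is false on general convex domains, however smooth $\sigma$ is, so no careful citation can supply it.

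To see why, take a convex polygon with an interior angle $\beta\in(\pi/2,\pi)$, and polar coordinates $(s,\phi)$ at that vertex. The Dirichlet solution in general contains the corner singularity $s^{\pi/\beta}\sin(\pi\phi/\beta)$. Its second derivatives are of order $s^{\pi/\beta-2}$ and lie in $L^p$ only for $p<2/(2-\pi/\beta)$. This threshold tends to $2$ as $\beta\to\pi$, so $W^{2,r}$ fails for any fixed $r>2=n$ once $\beta$ is close enough to $\pi$. Edge singularities of convex polyhedra with a dihedral angle near $\pi$ give the same obstruction for $n=3$.

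What rescues the statement is that $\pi/\beta>1$, so the gradient, of order $s^{\pi/\beta-1}$, stays bounded. You therefore need the direct Lipschitz estimate
$\|\nabla u\|_{L^\infty(\Omega)}\le C\|\nabla\cdot(\sigma\nabla u)\|_{L^r(\Omega)}$ on convex domains, for $r>n$ and $\sigma\in W^{1,\omega}(\Omega)^{n\times n}$ with $\omega>n$. This bypasses second derivatives entirely, and it is exactly what the paper uses (\cite[Lemma 2.1]{li2017maximal}), combined with \eqref{est: well 0} to bound $\|f-qu^m\|_{L^r(\Omega)}$. Your parenthetical ``or $W^{1,\infty}$ theory'' is the right tool, and it should replace the $W^{2,r}$ argument rather than stand beside it as an alternative.
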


	\begin{proof}
		Thanks to Assumption \ref{Assump: regularity}, since $q \in L^r(\Omega)$ is a nonnegative function, the existence of a unique weak solution $u \in H^1(\Omega) \cap C(\overline \Omega)$ of \eqref{eq: main} satisfying \eqref{est: well 0} is a well-known result (see, e.g., \cite[Theorem 4.10]{TF_optimal_10}).  By definition, the unique weak solution $u \in H^1_0(\Omega) \cap C(\overline \Omega)$ of \eqref{eq: main} satisfies
		\begin{equation}\label{identity:elliptic} 
			    -\nabla \cdot (\sigma \nabla u)  =  f- qu^m \quad\text{ in } \Omega \quad \textup{and}
                \quad
				u=0 \quad\text{ on } \partial \Omega,
		\end{equation}
		in the weak sense. Since $u \in C(\overline \Omega)$ and $f,q \in L^r(\Omega)$, the Sobolev embedding theorem implies that the right-hand side in \eqref{identity:elliptic} enjoys the regularity property   
		\begin{equation}
			f - qu^m  \in     L^{r}(\Omega) \embed W^{1,\overline p'}_0(\Omega)^* \quad \textrm{with} \quad \overline p= 
			\begin{cases}
				\frac{rn}{n-r} &\textup{if } 
				r<n \\
				\infty &\textup{if }  r\geq n.
			\end{cases}
		\end{equation}
		Direct computation shows that  $\overline p>2$ holds since $r>\frac{n}{2}$. If necessary we may reduce $\overline p>2$ such that $\overline p\in (2,\hat p)$ with $\hat p \in (2,\infty)$ as in Lemma \ref{lemma: Meyer-Jerison-Kenig}. Thus, applying Lemma \ref{lemma: Meyer-Jerison-Kenig} to \eqref{identity:elliptic} implies   $u \in W^{1,\overline p}(\Omega)$ and
		\begin{equation}
			\begin{split}
				\|u\|_{W^{1,\overline p}(\Omega)}&\le C \|f - qu^m \|_{W^{1,\overline p'}_0(\Omega)^* } \le C \|f - qu^m \|_{L^r(\Omega) }  \\
				&\le C( \|f\|_{L^r(\Omega)}  +\|q\|_{L^r(\Omega)}\|u \|^m_{C(\overline{\Omega})}) 
				\underbrace{\le}_{\eqref{est: well 0}} C\big(\|f\|_{L^r(\Omega)}+ \|q\|_{L^r(\Omega)}\|f\|^m_{L^r(\Omega)}\big)
			\end{split}
		\end{equation}
		with a constant $C>0$  independent of $u$, $f$, and $q$.

		Now, suppose  additionally that $\Omega$ is convex, $r>n$, and $\sigma  \in W^{1,\omega}(\Omega)^{n\times n}$ for some $\omega>n$. Then,  thanks to \cite[Lemma 2.1]{li2017maximal},  the unique weak solution of \eqref{eq: main} fulfills
		\begin{equation}\label{est:grad}
			\begin{split}
			    \|\nabla u\|_{L^\infty(\Omega)}\leq C\|\nabla \cdot (\sigma \nabla u)\|_{L^r(\Omega)} 
			\underbrace{=}_{\eqref{identity:elliptic}} C \| f-  qu^m \|_{L^r(\Omega)}
			\underbrace{\le}_{\eqref{est: well 0}} C\big(\|f\|_{L^r(\Omega)} + \|q\|_{L^r(\Omega)}\|f\|^m_{L^r(\Omega)}\big)
			\end{split}
		\end{equation}
		with a constant $C>0$ independent of $u$, $f$, and $q$. Combining \eqref{est:grad} and  \eqref{est: well 0}, the desired estimate  \eqref{est: well 1} is obtained for $\overline p=\infty$. Analogously, if $\Omega$ is additionally convex or of class $C^{1,1}$ and $\sigma  \in W^{1,\infty}(\Omega)^{n\times n}$,   the classical $H^2$-regularity results \cite[Theorem 3.2.1.2]{Grisvard2011} and \cite[Corollary 2.2.2.4]{Grisvard2011} imply   $u \in H^2(\Omega)$ and the corresponding a priori estimate.
	\end{proof}

	For the upcoming results, we shall make use of the algebraic identity
    \begin{equation}\label{difference binomial}
			a^m -b^m = \Big(\sum_{k=0}^{m-1} a^kb^{m-1-k} \Big) (a-b) \quad \forall a, b \in \R.
		\end{equation}
    By the mean value theorem, we have  $ a^m-b^m = m (a+\theta_{ab}(b-a))^{m-1} (a-b)$ for some $\theta_{ab}\in (0,1)$, and applying this to \eqref{difference binomial}, it follows that
    \begin{equation}\label{difference binomial ineq}
      \sum_{k=0}^{m-1} a^kb^{m-1-k}   \geq 0  \quad \forall a,b \in \R,
	\end{equation}
    since $m$ is an odd natural number.

	\begin{lemma}[Maximum principle and comparison principle]\label{lemma:MP}
		Let Assumption \ref{Assump: regularity} hold,   $q \in L^r(\Omega)$ be nonnegative, and $u \in H^1_0(\Omega) \cap C(\overline \Omega)$ denote the unique weak solution to \eqref{eq: main}. Then, the following claims hold:
		\begin{enumerate}[(i)]
			\item\label{item 1 MP} If $f(x)\geq 0$ holds for a.e. $x \in \Omega$, then $u(x)\geq 0$ holds for all $x \in \overline \Omega$.
			
			\item \label{item 2 MP} If $f(x)>0$  holds for a.e. $x \in \Omega$, then $u(x)>0$ holds for all $x \in  \Omega$.
			
			\item\label{item 3 MP} Let $f_1,f_2 \in L^r(\Omega)$ and 
			$u_1,u_2\in H^1_0(\Omega) \cap C(\overline \Omega)$ denote the unique weak solution to \eqref{eq: main} with $f=f_1$ and $f=f_2$, respectively.  If $f_1(x)\geq f_2(x)$ holds for a.e. $x \in\Omega$, then   $u_1(x)\geq u_2(x)$ holds for all $x \in \overline \Omega$. 
		\end{enumerate}
		
	\end{lemma}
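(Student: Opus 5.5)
I would prove (iii) first and then read off (i) as the special case $f_2=0$, whose unique solution is $u_2\equiv 0$. Part (ii) then needs an extra strong maximum principle argument.

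For (iii), set $w:=u_2-u_1\in H^1_0(\Omega)\cap C(\overline\Omega)$. The test function $w^+:=\max\{w,0\}$ lies in $H^1_0(\Omega)\cap L^\infty(\Omega)$ and is therefore admissible in both weak formulations. Subtracting them gives
\[
\int_\Omega \sigma\nabla w\cdot\nabla w^+\,dx+\int_\Omega q\,(u_2^m-u_1^m)\,w^+\,dx=\int_\Omega (f_2-f_1)\,w^+\,dx\le 0 .
\]
By \eqref{difference binomial}, $u_2^m-u_1^m=\big(\sum_{k=0}^{m-1}u_2^ku_1^{m-1-k}\big)w$, and the bracket is nonnegative by \eqref{difference binomial ineq}. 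Since also $q\ge 0$ and $w\,w^+=(w^+)^2\ge0$, the second integral is nonnegative. Moreover $\nabla w\cdot\nabla w^+$ equals $\nabla w^+$ a.e., so the first integral equals $\int_\Omega\sigma\nabla w^+\cdot\nabla w^+\,dx$. By \eqref{ellipticity} it is bounded below by $\underline\lambda\|\nabla w^+\|^2_{L^2(\Omega)}$. Hence $\nabla w^+=0$, and Poincaré's inequality on $H^1_0(\Omega)$ gives $w^+=0$ a.e. By continuity, $u_1\ge u_2$ on $\overline\Omega$.

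For (ii), (i) already gives $u\ge0$. I would then view $u$ as a nonnegative supersolution of a linear equation, $-\nabla\cdot(\sigma\nabla u)+c\,u=f\ge 0$ with $c:=q\,u^{m-1}$. The coefficient $c$ is nonnegative because $m-1$ is even, and it lies in $L^r(\Omega)$ with $r>n/2$ because $u\in C(\overline\Omega)$. The weak Harnack inequality for De Giorgi–Nash–Moser supersolutions with lower-order coefficient in $L^r$, $r>n/2$ (Gilbarg–Trudinger Thm.~8.18/8.19), then applies. It shows that if $u(x_0)=0$ for some interior $x_0$, then $u\equiv0$ on the connected domain $\Omega$. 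Inserting $u\equiv0$ into the equation forces $f=0$ a.e., which contradicts $f>0$ a.e. Hence $u>0$ in $\Omega$.

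The only delicate point is (ii): checking that the weak Harnack/strong maximum principle holds with merely bounded measurable $\sigma$ and an unbounded zero-order coefficient in $L^r$, $r>n/2$. The cited De Giorgi–Nash–Moser theory covers this. An alternative is to absorb $c\,u$ into the right-hand side, $-\nabla\cdot(\sigma\nabla u)=f-cu$, but that can be negative, so I would stick with the version including the lower-order term.
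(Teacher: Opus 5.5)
Your proposal is correct, and it differs from the paper's proof in two places.

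For (i) and (iii) you use the same truncation idea as the paper, but in the opposite order. The paper first tests with $u^-$ to get (i). It then obtains (iii) by writing $u_1-u_2$ as the solution of a \emph{linear} problem with the nonnegative potential $q\sum_{k=0}^{m-1}u_1^ku_2^{m-1-k}$ and applying (i) with $m=1$. You instead test the difference of the two nonlinear equations directly with $(u_2-u_1)^+$, and recover (i) as the case $f_2=0$, $u_2\equiv 0$. The two are equivalent; yours skips the linearized problem. One small slip: the a.e.\ identity should read $\sigma\nabla w\cdot\nabla w^+=\sigma\nabla w^+\cdot\nabla w^+$.

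For (ii) the routes genuinely differ, and yours actually proves the claim where the paper's does not. The paper assumes $u$ vanishes on a nonempty open set $O$ and derives $f=0$ a.e.\ on $O$. That only shows the zero set of $u$ in $\Omega$ has empty interior. It does not exclude a continuous nonnegative $u$ vanishing at an isolated interior point, which is exactly what a strong maximum principle is needed for. Your weak Harnack argument closes this gap. You do not even need connectedness: $u(x_0)=0$ already gives $u=0$ a.e.\ on a ball around $x_0$, contradicting $f>0$ a.e.\ there.

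The one point to tighten is the citation. Gilbarg--Trudinger's Theorem 8.18 is stated under the boundedness condition (8.6) on the lower-order coefficients. For $c=qu^{m-1}\in L^r$ with $r>n/2$, you need the classical extension of the weak Harnack inequality to zeroth-order coefficients in $L^{r}$, $r>n/2$ (e.g.\ Trudinger's extension of Moser's theory), not the theorem verbatim. In the paper's later applications $q\in\mathcal A\subset L^\infty(\Omega)$, so there $c\in L^\infty(\Omega)$ and Theorem 8.18 applies as stated.
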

	
	\begin{proof}
		\ref{item 1 MP} Let us write $u=u^+-u^-$ where $u^+ :=\max\{u, 0\}\geq 0$ and $u^- :=\max\{ - u , 0\}\geq 0$.
		As $u\in H^1_0(\Omega)\cap C(\overline \Omega)$, well-known arguments (for example, see \cite[Chapter II, Theorem A.1]{KS2000_variational}) yield that $u^+, u^- \in H^1_0(\Omega)\cap C(\overline \Omega)$. Now, suppose that $f(x)\geq 0$ holds for a.e. $x \in \Omega$.  To show that $u(x)\geq 0$ for all $x \in \overline \Omega$, we consider $u^-\in H^1_0(\Omega) \cap C(\overline \Omega)$ as the test function in the weak formulation of \eqref{eq: main} to deduce that 
		\begin{equation}
			\begin{split}
				0 & \underbrace{\leq}_{\text{$f\geq 0$ a.e. }} \int_{\Omega} f u^- \, dx  
				=\int_{\Omega}  \sigma\nabla u  \cdot \nabla u^- \, dx + qu^m u^- \, dx \\
				&\ \ \ \  =\int_{\Omega} \sigma \nabla \LC u^+-u^-\RC \cdot \nabla u^- \, dx +\int_{\Omega} q\LC u^+-u^-\RC^m u^- \, dx\\
				& \,\underbrace{=}_{u^+\cdot u^- =0 } -\int_{\Omega}\sigma \nabla u^- \cdot \nabla u^- \, dx -\int_{\Omega}  q\LC u^-\RC^{m+1} \, dx 
				\underbrace{\le}_{u^-,q \ge 0} 0,  
			\end{split}
		\end{equation}
		from which it follows that $\|\nabla u^-\|_{L^2(\Omega)}^2 \le 0 $ implies $ u^- \equiv 0$.
		Thus, the assertion \ref{item 1 MP} follows.
		
		\ref{item 2 MP} Suppose that $f(x)>0$  holds for a.e. $x \in \Omega$. We proceed by contradiction and assume that there exists a nonempty open set $O \subset \Omega$ such that $u(x) = 0$ for all $x \in O$.
		Then, the weak formulation gives 
		\begin{equation}
			\begin{split}
				\int_{O} f\varphi \, dx =\int_{O}\sigma \nabla u \cdot \nabla \varphi \, dx + \int_{O} q u^m \varphi \, dx =0 \quad \forall \varphi\in C^\infty_0(O).
			\end{split}
		\end{equation}
		Thus, it follows that $f=0$ a.e. in $O$, which is a contradiction. In conclusion, along with \ref{item 1 MP}, it follows that $u$ is positive.
		
		\ref{item 3 MP} In view of \eqref{difference binomial}, we see that $v:= u_1-u_2 \in  H^1_0(\Omega) \cap C(\overline \Omega)$ is the unique weak solution to 
		\begin{equation} \label{lincomparison}
			-\nabla \cdot \LC \sigma\nabla v \RC +  {q\Big(\sum_{k=0}^{m-1} u_1^k u_2^{m-1-k} \Big)}  v = f_1 -f_2 \geq 0 \text{ in }\Omega, \quad 
			v  =0 \text{ on }\p \Omega.
		\end{equation}
		Note that  \eqref{lincomparison} is a special case of \eqref{eq: main} with $m=1$ and the coefficient $q\Big(\sum_{k=0}^{m-1} u_1^k u_2^{m-1-k} \Big)\in L^r(\Omega)$, which is nonnegative  due to \eqref{difference binomial ineq}  and $q\ge 0$. Therefore, applying  (i) to \eqref{lincomparison} implies that $v(x)\geq 0 $ holds for all  $x \in \overline \Omega$, which concludes the proof.
	\end{proof}

	\section{Stability analysis}\label{sec: convergence rates}
	
	The goal of this section is to establish conditional stability estimates for  \eqref{eq: main}. Our analysis is based on the use of Hardy-type inequalities, Gagliardo-Nirenberg inequalities with fractional negative norms, and specific weighted Sobolev spaces with singular weights related to the distance function \eqref{distance function}. All these   analytical tools are collected in the following subsection:

	\subsection{Analytical tools} \label{sec: anal tools} In all what follows, let Assumption \ref{Assump: regularity} hold. Let us first discuss Hardy-type inequalities relevant for our stability analysis. To simplify the notation, we introduce the following distance function:
	\begin{equation}\label{distance function}
		\begin{split}
			\rho(x):=\dist(x,\p\Omega) \quad \forall x\in \overline{\Omega},
		\end{split}
	\end{equation}
	that satisfies the regularity property $\rho\in W^{1,\infty}(\Omega)$ (see, e.g., \cite[Theorem 2.1]{delfour2011shapes}).   For every $p\in (1,\infty)$ and $s\in (-\infty, p-1)$, it is well-known that 
	the so-called weighted Hardy inequality
	\begin{equation}\label{Hardy:weighted}
		\begin{split}
		    \int_{\Omega}  \abs{\varphi(x)}^p \rho(x)^{s-p} \, dx 
		\leq C  \sum_{|\beta|=1}\int_\Omega |D^\beta \varphi(x)|^p \rho(x)^s \, dx 
		\quad \forall \varphi\in C_0^\infty(\Omega)
		\end{split}
	\end{equation}
	holds with a constant $C>0$ depending only on   $\Omega$, $s$, and $p$ (see, e.g., \cite[Theorem 1.2]{koskela2009weighted} and \cite[Theorem 1.6]{nevcas1962methode}). Moreover, for $\sigma\in (0,1)$ and $p\in (1,\infty)$, it was shown in \cite[Theorem 1.1]{dyda2004fractional} that 
	\begin{equation}\label{Hardy:fractional}
		\int_{\Omega}|\varphi(x)|^p \rho(x)^{-\sigma p}\,  dx
		\leq C \int_{\Omega}\int_{\Omega}\frac{|\varphi(x)-\varphi(y)|^p}{|x-y|^{n+\sigma p}} \, dxdy \quad\forall\,\varphi\in C_0^\infty(\Omega).
	\end{equation}
	A useful consequence of   \eqref{Hardy:weighted} and \eqref{Hardy:fractional} is as follows:
	
	\begin{lemma}\label{lemma:hardy}
		Let $p \in (1,\infty)$ and $s \in (0,\infty)$. Then, there exists a constant $C>0$ such that 
		\begin{equation}
			\|\rho^{-s} \varphi\|_{L^p(\Omega)}\leq C \|\varphi\|_{W^{s,p}(\Omega)}   \quad \forall \varphi\in C_0^\infty(\Omega).
		\end{equation}  
	\end{lemma}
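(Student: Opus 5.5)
Write $s = k + \theta$ with $k \in \N \cup \{0\}$ and $\theta \in [0,1)$. The plan is to peel off one power of $\rho^{-1}$ at a time using the weighted Hardy inequality \eqref{Hardy:weighted}, applied to $\varphi$ and its derivatives. The remaining fractional power $\rho^{-\theta}$ is then handled by the fractional Hardy inequality \eqref{Hardy:fractional}, applied to derivatives of order $k$. All functions involved are derivatives of $\varphi \in C_0^\infty(\Omega)$, so they stay in $C_0^\infty(\Omega)$ and both inequalities apply directly.

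\emph{Step 1 (integer reduction).} For a fixed $\psi\in C_0^\infty(\Omega)$ and $j\ge 1$, apply \eqref{Hardy:weighted} with exponent $p$ and weight parameter $s_j:=p-jp$. This needs $s_j<p-1$, which holds for every $j\ge1$ because $p-jp\le 0<p-1$. The result is
\begin{equation}
\int_\Omega |\psi|^p\rho^{-jp}\,dx\le C\sum_{|\beta|=1}\int_\Omega |D^\beta\psi|^p\rho^{-(j-1)p}\,dx .
\end{equation}
Iterating this $k$ times, with $\psi$ replaced successively by derivatives of $\varphi$, gives
\begin{equation}
\|\rho^{-s}\varphi\|_{L^p}^p\le C\sum_{|\beta|=k}\int_\Omega |D^\beta\varphi|^p\rho^{-\theta p}\,dx .
\end{equation}
At the $i$-th step the exponent is $\theta p+(k-i+1)p$. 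Writing it as $jp$ with non-integer $j$ is no problem, because \eqref{Hardy:weighted} holds for any real $s<p-1$. Precisely, the weight parameter used at that step is $s'=p-(\theta+k-i+1)p=-(\theta+k-i)p\le 0<p-1$, which is admissible.

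\emph{Step 2 (fractional remainder).} If $\theta=0$, the right-hand side above is bounded by $\|\varphi\|_{W^{k,p}}^p$ and we are done. If $\theta\in(0,1)$, apply \eqref{Hardy:fractional} with $\sigma=\theta$ to each $D^\beta\varphi\in C_0^\infty(\Omega)$ with $|\beta|=k$. Its right-hand side is exactly the Gagliardo seminorm part of $\|\varphi\|^p_{W^{k+\theta,p}(\Omega)}$ in \eqref{Wsp:norm}, and the claim follows.

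\emph{Expected obstacle.} The only delicate point is keeping the admissibility condition $s'<p-1$ valid throughout the iteration, and this is why the fractional part is treated last, at the level of the $k$-th derivatives. If instead one applied \eqref{Hardy:fractional} first to $\varphi$, one would need weighted fractional Hardy inequalities, which are not available among the earlier results. With the ordering above, every application uses a nonpositive weight parameter, which is always admissible, so the argument closes. The constant depends only on $\Omega$, $p$, and $s$.
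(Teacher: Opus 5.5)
Your proof follows the paper's proof. You apply \eqref{Hardy:weighted} $k$ times to reduce $\int_\Omega|\varphi|^p\rho^{-sp}\,dx$ to $\sum_{|\beta|=k}\int_\Omega|D^\beta\varphi|^p\rho^{-\theta p}\,dx$, and then apply \eqref{Hardy:fractional} to the $k$-th derivatives. Your bookkeeping is correct and more explicit than the paper's: the weight parameter $-(\theta+k-i)p\le 0<p-1$ is admissible at every step, and you also cover the case $k=0$.

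\textbf{Gap in Step 2.} You inherit this gap from the way \eqref{Hardy:fractional} is quoted in the paper. On a bounded Lipschitz domain, that seminorm-only inequality holds only for $\sigma p>1$; this is the hypothesis in \cite{dyda2004fractional}. For $\sigma p\le 1$ it is false, because then $C_0^\infty(\Omega)$ is dense in $W^{\sigma,p}(\Omega)$. Choose $\psi_j\in C_0^\infty(\Omega)$ with $\psi_j\to 1$ in $W^{\sigma,p}(\Omega)$, and (for a subsequence) a.e. Then the Gagliardo seminorms tend to $0$, whereas Fatou's lemma gives $\liminf_j\int_\Omega|\psi_j|^p\rho^{-\sigma p}\,dx\ge\int_\Omega\rho^{-\sigma p}\,dx>0$. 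So Step 2 is justified only when $\theta=0$ or $\theta p>1$.

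\textbf{Repair for $\theta p<1$.} Here the argument can be saved, because the lemma has the full norm on the right. Use the full-norm inequality $\|\rho^{-\theta}\psi\|_{L^p(\Omega)}\le C\|\psi\|_{W^{\theta,p}(\Omega)}$ for $\psi\in W^{\theta,p}(\Omega)$, a standard result when $\theta p<1$ (see e.g.\ \cite{Grisvard2011}). Apply it with $\psi=D^\beta\varphi$, and note $\|D^\beta\varphi\|_{W^{\theta,p}(\Omega)}\le\|\varphi\|_{W^{k+\theta,p}(\Omega)}$.

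\textbf{The case $s=1/p$.} No argument can work here. The same sequence $\psi_j$ stays bounded in $W^{1/p,p}(\Omega)$. The lemma combined with Fatou would then give $\int_\Omega\rho^{-1}\,dx<\infty$, which is false on a Lipschitz domain. So the statement must at least exclude $s=1/p$. With the modified Step 2, your argument proves it whenever the fractional part $\theta$ of $s$ differs from $1/p$.
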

	\begin{proof}
		Suppose that $s$ is not an integer. Thus, we have  $s=m+\sigma$ with $m\in \mathbb N$ and $\sigma\in (0,1)$. Let $\varphi\in C_0^\infty(\Omega)$ be arbitrarily fixed.
		Applying the weighted Hardy inequality \eqref{Hardy:weighted} $m$-times, we  infer that 
		\begin{equation} 
			\begin{split}
				\int_{\Omega} |\varphi|^p \rho^{-sp}\,  dx &=\int_{\Omega} \!\! |\varphi|^p\rho^{-mp - \sigma p} \, dx \leq C \sum_{|\beta|=1}  \int_{\Omega} |D^\beta \varphi|^p\rho^{(1-m)p - \sigma p}\,  dx \\
				&\leq { {n} } C^2 \sum_{|\beta|=2}  \int_{\Omega} |D^\beta \varphi|^p\rho^{(2-m)p - \sigma p}\,  dx \le\cdots\leq {{n^{m-1}} }C^m \sum_{|\beta|=m} \int_{\Omega} |D^\beta \varphi|^p \rho^{-\sigma p}\,  dx.
			\end{split}
		\end{equation}
		Then, applying   \eqref{Hardy:fractional} yields  the existence of a   constant $C>0$ such that 
		\begin{equation*}\label{semi norm estimate}
			\begin{split}
				\int_{\Omega} |\varphi|^p \rho^{-sp} \, dx
				\leq C \sum_{|\beta|=m}\int_\Omega\int_\Omega
				\frac{|D^\beta \varphi(x)-D^\beta \varphi(y)|^p}{|x-y|^{n+\sigma p}} \, dxdy \underbrace{\leq}_{\eqref{Wsp:norm}} C \|\varphi\|^p_{W^{s,p}(\Omega)}.
			\end{split}
		\end{equation*}
		The case of $s\in \mathbb N$ is proved in the same way without the above additional estimate. This completes the proof.
	\end{proof}
	Serving as our second analytical tool, we provide Gagliardo--Nirenberg inequalities for functions on the Lipschitz domain $\Omega$  with fractional negative norms. To this end, for $s\in \R$ and $p,p_1\in [1,\infty]$, we denote by $B^{s}_{p,p_1}(\Omega)$  the conventional Besov on $\Omega$ (see \cite[Definition 1.95]{triebel2006theory}).  It is well-known  (see \cite[(5) on page 139]{triebel1983theory}) that   
		\begin{equation}\label{wsp:characterization0}
			W^{s,p}(\Omega)= B^s_{p,p}(\Omega) \quad\forall s\in (0,\infty)\backslash \mathbb N  \quad \forall p \in (1,\infty),
		\end{equation}
		which, in particular, implies   
		$$ 
		W^{s,p}_0(\Omega)={\mathring{B}^s_{p,p}}(\Omega) \quad \forall s\in {(0,\infty)}\backslash \mathbb N \quad  \forall p \in (1,\infty)
		$$ 
		with ${\mathring{B}^s_{p,p}}(\Omega)$ being the closure of 
		$C_0^\infty(\Omega)$ with respect to the norm of $B^s_{p,p}(\Omega)$. Combined with \cite[(43)]{triebel2002function}, this ensures that 
		\begin{align}
			W^{-s,p}(\Omega)= (W_0^{s,p'}(\Omega))^*= {\mathring{B}^s_{p',p'}}(\Omega)^* = B^{-s}_{p,p}(\Omega) \quad \forall s \in {(0,\infty)}\setminus \mathbb N \quad \forall p \in (1,\infty). 
		\end{align}
		Recalling the definition $W^{-k,p}(\Omega)= (W^{k,p'}_0(\Omega))^*$, we conclude  that 
		\begin{equation}\label{wsp:characterization}
			W^{s,p}(\Omega)= B^s_{p,p}(\Omega) \quad\forall s\in \R\backslash { \mathbb Z}  \quad \forall p \in (1,\infty).
		\end{equation}
		\begin{lemma}\label{lemma: interpolation}
			Let $p\in (1,\infty)$ and   $s_0>1$ be real numbers. Then, there exists a constant $C>0$, independent of $u$, such that 
			\begin{equation}\label{lemma:interpolation1}
				\|u\|_{W^{-1,p}(\Omega)}\leq C \|u\|_{W^{-s_0,p}(\Omega)}^{\frac{1}{s_0}} \|u\|_{L^p(\Omega)}^{\frac{s_0-1}{s_0}}
				\quad \forall \varphi\in L^p(\Omega).
			\end{equation}
			Furthermore, for every $s_1>0$, there exists a constant $C>0$, independent of $u$, such that 
			\begin{equation}\label{lemma:interpolation2} 
				\|u\|_{L^p(\Omega)}\leq C \|u\|^{\frac{1}{s_1+1}}_{W^{-s_1,p}(\Omega)} \|u\|^{\frac{s_1}{s_1+1}}_{W^{1,p}(\Omega)}
				\quad \forall u\in W^{1,p}(\Omega).
			\end{equation}
		\end{lemma}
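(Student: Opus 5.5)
The plan is to obtain both inequalities from complex (or real) interpolation of Besov spaces on the Lipschitz domain $\Omega$, using the identification \eqref{wsp:characterization}. For Lipschitz domains, Triebel's theory provides a universal extension operator (Rychkov's extension) that acts simultaneously on all $B^s_{p,p}(\Omega)$, $s\in\R$. Consequently the Besov scale on $\Omega$ is an interpolation scale:
\[
\big(B^{s_a}_{p,p}(\Omega),B^{s_b}_{p,p}(\Omega)\big)_{[\theta]} = B^{s_\theta}_{p,p}(\Omega), \qquad s_\theta=(1-\theta)s_a+\theta s_b .
\]
The interpolation inequality $\|u\|_{[\theta]}\le C\|u\|_{a}^{1-\theta}\|u\|_{b}^{\theta}$ then holds. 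The only difficulty is that integer indices ($-1$, $0$, $1$) are not Besov spaces with $q=p$ unless $p=2$. I will handle this with embeddings and a small perturbation of the exponents.

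\emph{First inequality.} Take $\varepsilon>0$ small with $1\pm\varepsilon$ non-integer. We have the Triebel--Lizorkin identifications $L^p=F^0_{p,2}$ and $W^{-1,p}=F^{-1}_{p,2}$; the latter holds because $W^{1,p'}_0(\Omega)=\mathring F^1_{p',2}$ and duality works on Lipschitz domains. Rather than perturbing indices, I will interpolate directly in the $F$-scale:
\[
\big(F^{-s_0}_{p,2}(\Omega),F^{0}_{p,2}(\Omega)\big)_{[\theta]}=F^{-1}_{p,2}(\Omega), \qquad \theta=\tfrac{s_0-1}{s_0}.
\]
The same universal extension argument applies to the $F$-scale. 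On the left, $W^{-s_0,p}$ for non-integer $s_0$ equals $B^{-s_0}_{p,p}$, which embeds into $F^{-s_0-\varepsilon}_{p,2}$. This loses $\varepsilon$ in the exponent. To avoid the loss, I will instead use the real method with fine index $p$, i.e.
\[
\big(B^{-s_0}_{p,p},L^p\big)_{\theta,p}\hookrightarrow\cdots ,
\]
and compare via the standard sandwich $B^{-1}_{p,\min(p,2)}\hookrightarrow F^{-1}_{p,2}\hookrightarrow B^{-1}_{p,\max(p,2)}$. Concretely, I would use the general Gagliardo--Nirenberg principle: if $A_0\cap A_1\hookrightarrow X$ and $X$ is of class $J_\theta$ between $A_0$ and $A_1$, then $\|u\|_X\le C\|u\|_{A_0}^{1-\theta}\|u\|_{A_1}^\theta$. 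Here $X=W^{-1,p}$, $A_0=W^{-s_0,p}$, $A_1=L^p$. The class-$J_\theta$ property follows from $(A_0,A_1)_{\theta,1}\hookrightarrow X$. Since $(B^{-s_0}_{p,p},F^0_{p,2})_{\theta,1}=B^{-1}_{p,1}\hookrightarrow F^{-1}_{p,2}$, the inequality holds with exactly $\theta=(s_0-1)/s_0$. This gives $\|u\|_{W^{-1,p}}\le C\|u\|_{W^{-s_0,p}}^{1/s_0}\|u\|_{L^p}^{(s_0-1)/s_0}$. For integer $s_0$ the same reasoning applies with $F^{-s_0}_{p,2}$ in place of $B^{-s_0}_{p,p}$.

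\emph{Second inequality.} I use the same $J_\theta$ argument with $A_0=W^{-s_1,p}$, $A_1=W^{1,p}=F^1_{p,2}$ and target $L^p=F^0_{p,2}$, with $\theta=s_1/(s_1+1)$. We have
\[
(A_0,A_1)_{\theta,1}=B^0_{p,1}(\Omega)\hookrightarrow F^0_{p,2}(\Omega)=L^p(\Omega).
\]
This yields $\|u\|_{L^p}\le C\|u\|_{W^{-s_1,p}}^{1/(s_1+1)}\|u\|_{W^{1,p}}^{s_1/(s_1+1)}$. The underlying real interpolation identity $(B^{s_a}_{p,q_a},B^{s_b}_{p,q_b})_{\theta,1}=B^{s_\theta}_{p,1}$ for $s_a\neq s_b$ holds on $\mathbb R^n$ and transfers to $\Omega$ by retraction–coretraction via Rychkov's extension. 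Since $F$-spaces sit between $B$-spaces with the same $s$, the $F$ endpoints can be replaced by neighbouring $B$-spaces with the same smoothness, so the identity applies to the $F$ endpoints as well.

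\emph{Main obstacle.} The delicate step is justifying these identifications on a merely Lipschitz domain, particularly for negative smoothness. This needs the duality $W^{-k,p}=(W^{k,p'}_0)^*=F^{-k}_{p,2}(\Omega)$, which on $\Omega$ is defined via restriction, and this requires the results of \cite{triebel2002function}. Together with the existence of a common extension/retraction, this lets the $\mathbb R^n$ interpolation theory transfer to $\Omega$. I would cite Triebel for these facts and keep the remaining argument at the level of the $J_\theta$ reiteration described above.
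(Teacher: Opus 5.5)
Your final argument is correct and is essentially the paper's proof. The paper quotes Triebel's Besov interpolation inequality $\|u\|_{B^{s_\theta}_{p,1}(\Omega)}\le C\|u\|_{B^{s_a}_{p,\infty}(\Omega)}^{1-\theta}\|u\|_{B^{s_b}_{p,\infty}(\Omega)}^{\theta}$, which is exactly your $J_\theta$ bound for $(A_0,A_1)_{\theta,1}$ combined with the real-interpolation identity on the Lipschitz domain, and it then uses the same sandwich embeddings $W^{-s_0,p}\hookrightarrow B^{-s_0}_{p,\infty}$, $L^p\hookrightarrow B^0_{p,\infty}$, $B^{-1}_{p,1}\hookrightarrow W^{-1,p}$ (respectively $W^{1,p}\hookrightarrow B^1_{p,\infty}$, $B^0_{p,1}\hookrightarrow L^p$); the complex-interpolation and $\varepsilon$-perturbation detours in your write-up are unnecessary and can be dropped, since only these one-sided embeddings are needed.
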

		\begin{proof} 
			By invoking \cite[(1.368)]{triebel2006theory}, we obtain the following interpolation inequalities  
			\begin{align}
				\|u\|_{B^{-1}_{p,1}(\Omega)} &\leq C  \|u\|_{B^{-s_0}_{p,\infty}(\Omega)}^{\frac{1}{s_0}} \|u\|_{B^0_{p,\infty}(\Omega)}^{\frac{s_0-1}{s_0}}\quad \forall u\in   B^{0}_{p,\infty}(\Omega)\label{intep:besov1}\\
				\|u\|_{B^0_{p,1}(\Omega)} & \leq C \|u\|^{\frac{1}{s_1+1}}_{B^{-s_1}_{p,\infty}(\Omega)} \|u\|^{\frac{s_1}{s_1+1}}_{B^{1}_{p,\infty}(\Omega)}
				\quad \forall u\in B^{1}_{p,\infty}(\Omega), \label{intep:besov2}
			\end{align}
			where $C>0$ is a constant independent of $u$. Using \eqref{wsp:characterization}, we deduce from the embedding theorem in \cite[(1.299)]{triebel2006theory} that
			\begin{equation}\label{intep:embed}
				B^{-1}_{p,1}(\Omega) \embed W^{-1,p}(\Omega), 
				\quad W^{-s_0,p}(\Omega)\embed B_{p,\infty}^{-s_0}(\Omega), \quad L^p(\Omega)\embed B^0_{p,\infty}(\Omega).
			\end{equation}
			Combining \eqref{intep:embed} with \eqref{intep:besov1} yields   \eqref{lemma:interpolation1}. Similarly,   \eqref{lemma:interpolation2} follows from \eqref{intep:besov2} together with  
			\begin{equation}
				B^0_{p,1}(\Omega) \embed L^p(\Omega),   
				\quad  W^{-s_1,p}(\Omega)\embed B^{-s_1}_{p,\infty}(\Omega),
				\quad  W^{1,p}(\Omega)\embed B^{1}_{p,\infty}(\Omega). 
			\end{equation}
            This concludes the proof.
		\end{proof}

		Serving as our third analytical tool, we consider the following weighted Sobolev space with a singular weight related to the distance function \eqref{distance function}.
		
		\begin{definition} \label{def: weightedspace}
			Suppose that  $w \in C(\overline{\Omega}) \cap  H^1_0(\Omega) \cap W^{1,\overline p}(\Omega)$ for some  $\overline p \in (2,\infty]$  satisfies  
			\begin{equation}\label{assump:weight}
				\exists C_\gamma,\gamma>0  \quad \forall x \in \overline \Omega: \quad 	w(x)\geq C_\gamma \rho(x)^\gamma  \quad   
			\end{equation}
			with the distance function $\rho(x)$ as defined in \eqref{distance function}.
			Then, for every $\theta\in (1,2]$, we denote by  ${\mathcal{X}}^{\theta }_{w}(\Omega)$  the closure of $C_0^\infty(\Omega)$ under the norm 
			\begin{equation}\label{def:weighted_space}
				\|\varphi\|_{\mathcal{X}^{\theta}_{w}(\Omega)}:=\big\|w^{-m} \varphi\big\|_{W^{1,\theta}(\Omega)} 
				\quad \forall \varphi\in C_0^\infty(\Omega). 
			\end{equation}
		\end{definition}

		\begin{remark} According to Definition \ref{def: weightedspace}, $w$ has a vanishing trace so that the weight function $w^{-m}$ appearing in $\|\cdot\|_{\mathcal{X}^{\theta}_{w}(\Omega)}$ features singularity on the boundary. However, 
			since every $\varphi\in C_0^\infty(\Omega)$ admits a compact support $\textup{supp}(\varphi) \subset \Omega$, the condition \eqref{assump:weight} ensures that 
			\begin{equation}\label{reg w}
				\begin{aligned}
					0 < w^{-m}(x)\leq  C_\gamma^{-m}\rho(x)^{-\gamma m} \leq  C_\gamma^{-m} \textup{dist} (\textup{supp}(\varphi),\p\Omega)^{-\gamma m} \quad \forall x \in \textup{supp}(\varphi) \quad \forall \varphi \in  C_0^\infty(\Omega).
				\end{aligned}
			\end{equation}
			Therefore, $\|\cdot\|_{\mathcal{X}^{\theta}_{w}(\Omega)}$ defines indeed a norm in $ C_0^\infty(\Omega)$. In conclusion,  the closure of $C_0^\infty(\Omega)$ with respect to $\|\cdot\|_{\mathcal{X}^{\theta}_{w}(\Omega)}$ is well-defined.

		\end{remark}

		In view of  $w \in C(\overline{\Omega})$ given by Definition \ref{def: weightedspace},  we see that
		\begin{align*}
			\|\varphi\|_{L^{\theta}(\Omega)}\leq \|w\|_{C(\overline \Omega)}^m \|w^{-m}\varphi\|_{L^{\theta}(\Omega)} \leq \|w\|_{C(\overline \Omega)}^m\|w^{-m}\varphi\|_{W^{1,\theta}(\Omega)}  = 	\|w\|_{C(\overline \Omega)}^m			\|\varphi\|_{\mathcal{X}^{\theta}_{w}(\Omega)} \quad \forall \varphi \in C^\infty_0(\Omega),
		\end{align*} 
		implying by  Definition \ref{def: weightedspace}   that 
		\begin{equation}\label{Gammar embed}
			\mathcal{X}^{\theta}_{w}(\Omega)\embed L^{\theta}(\Omega) \quad\implies 
			\quad L^{\theta'}(\Omega)\embed  \mathcal{X}^{\theta}_{w}(\Omega)^*,
		\end{equation}
        where $\theta'=\frac{\theta}{\theta-1}$ denotes the conjugate exponent.
		To investigate further properties of  $\mathcal{X}^{\theta}_{w}(\Omega)$, we introduce $\mathcal{Z}^\theta_w(\Omega)$ to be the closure  of $C_0^\infty(\Omega)$ under the norm
		\begin{equation}\label{def:Z}
			\|\varphi\|_{\mathcal{Z}^\theta_w(\Omega)}:=\|\rho^{-(m+1)\gamma}\varphi\|_{L^{\vartheta}(\Omega)}+\|\rho^{-m\gamma } \nabla \varphi\|_{L^\theta(\Omega)^n}\quad  \forall \varphi\in C_0^\infty(\Omega) 
		\end{equation} 
		with
		\begin{equation}\label{def:vartheta}
			\vartheta := (1/{\theta} -1/{\overline p})^{-1} \in (1,\infty),
		\end{equation} 
		since $\overline p \in (2,\infty]$ and $\theta \in (1,2]$.  
        
		\begin{lemma}\label{lemma:W dual0}
        Let $w \in C(\overline{\Omega}) \cap  H^1_0(\Omega) \cap W^{1,\overline p}(\Omega)$ for some  $\overline p \in (2,\infty]$ satisfy \eqref{assump:weight} and  $\theta \in (1,2]$. Then, it holds that 
			\begin{equation}\label{W embed Gammar}
				\mathcal{Z}^\theta_w(\Omega)\embed \mathcal{X}^{\theta}_{w}(\Omega).
			\end{equation}
			In other words,   there exists a constant $C>0$ such that 
			$$
			\|\varphi\|_{\mathcal{X}^{\theta}_{w}(\Omega)} \le C\|\varphi\|_{\mathcal{Z}^\theta_w(\Omega)} \quad \forall \varphi \in \mathcal{X}^{\theta}_{w}(\Omega).
			$$
		\end{lemma}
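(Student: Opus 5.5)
By density, it suffices to prove the estimate $\|\varphi\|_{\mathcal{X}^{\theta}_{w}(\Omega)} \le C\|\varphi\|_{\mathcal{Z}^\theta_w(\Omega)}$ for all $\varphi\in C_0^\infty(\Omega)$; both spaces are closures of $C_0^\infty(\Omega)$, so the bound then extends to the continuous embedding \eqref{W embed Gammar}. For fixed $\varphi\in C_0^\infty(\Omega)$, the remark around \eqref{reg w} shows that $w^{-m}$ is bounded away from the boundary on $\textup{supp}(\varphi)$. Since $w\in W^{1,\overline p}(\Omega)$ and $w\ge C_\gamma\rho^\gamma>0$ on the support, the chain rule gives $w^{-m}\varphi\in W^{1,\theta}(\Omega)$ with
\begin{equation*}
\nabla(w^{-m}\varphi)= w^{-m}\nabla\varphi - m\, w^{-m-1}\varphi\,\nabla w .
\end{equation*}
Hence $\|\varphi\|_{\mathcal{X}^{\theta}_{w}(\Omega)}$ is controlled by three terms:
\begin{equation*}
\|w^{-m}\varphi\|_{L^\theta(\Omega)},\qquad \|w^{-m}\nabla\varphi\|_{L^\theta(\Omega)^n},\qquad m\|w^{-m-1}\varphi\nabla w\|_{L^\theta(\Omega)^n}.
\end{equation*}

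We bound each term using \eqref{assump:weight}, which gives $w^{-k}\le C_\gamma^{-k}\rho^{-k\gamma}$.
\begin{itemize}
\item[(a)] The second term satisfies $\|w^{-m}\nabla\varphi\|_{L^\theta}\le C_\gamma^{-m}\|\rho^{-m\gamma}\nabla\varphi\|_{L^\theta}$, which is exactly the gradient part of $\|\varphi\|_{\mathcal{Z}^\theta_w(\Omega)}$.
\item[(b)] For the third term, Hölder's inequality with $1/\theta=1/\vartheta+1/\overline p$ (this is the definition \eqref{def:vartheta}) gives
\begin{equation*}
\|w^{-m-1}\varphi\nabla w\|_{L^\theta}\le \|\nabla w\|_{L^{\overline p}}\,\|w^{-(m+1)}\varphi\|_{L^\vartheta}\le C_\gamma^{-m-1}\|w\|_{W^{1,\overline p}}\,\|\rho^{-(m+1)\gamma}\varphi\|_{L^\vartheta}.
\end{equation*}
The case $\overline p=\infty$ is the same with $\vartheta=\theta$.
\item[(c)] For the first term, $\rho$ is bounded on $\overline\Omega$, so $\rho^{-m\gamma}\le (\operatorname{diam}\Omega)^{\gamma}\rho^{-(m+1)\gamma}$. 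Since $\vartheta\ge\theta$ and $\Omega$ is bounded, $\|\cdot\|_{L^\theta}\le C\|\cdot\|_{L^\vartheta}$. Together these give
\begin{equation*}
\|w^{-m}\varphi\|_{L^\theta}\le C\|\rho^{-(m+1)\gamma}\varphi\|_{L^\vartheta}.
\end{equation*}
\end{itemize}
Summing (a)--(c) yields the estimate with a constant $C$ depending only on $m,\gamma,C_\gamma,\|w\|_{W^{1,\overline p}(\Omega)},\Omega$, and not on $\varphi$.

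The only delicate point is justifying the product/chain rule for $w^{-m}\varphi$ when $w$ is merely $W^{1,\overline p}$. This is handled by the compact support of $\varphi$: on a neighbourhood of $\textup{supp}(\varphi)$ we have $w\ge c>0$, so $t\mapsto t^{-m}$ is smooth and Lipschitz on the range of $w$ there. The standard chain rule for Sobolev functions then applies, and multiplying by the smooth, compactly supported $\varphi$ is harmless.
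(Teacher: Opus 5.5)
Your proposal is correct and follows the paper's proof essentially step for step. It uses the same chain-rule splitting of $\nabla(w^{-m}\varphi)$, the same H\"older step with $1/\theta=1/\vartheta+1/\overline p$, and the same pointwise bound $w^{-k}\le C_\gamma^{-k}\rho^{-k\gamma}$ on $\textup{supp}(\varphi)$; the only cosmetic difference is in the zeroth-order term, where the paper writes $w^{-m}=w\cdot w^{-(m+1)}$ and uses $\|w\|_{C(\overline\Omega)}$ while you use $\rho\le\operatorname{diam}\Omega$, and you also spell out the chain-rule justification and the inclusion $L^\vartheta(\Omega)\hookrightarrow L^\theta(\Omega)$ that the paper leaves implicit.
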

		
		\begin{proof}
			In view of the regularity $w \in C(\overline{\Omega}) \cap W^{1, \overline p}(\Omega)$ and \eqref{assump:weight}, we may apply the chain rule to obtain 
			\begin{equation}
				\begin{aligned}
					\nabla ( w^{-m} \varphi)=-m [\rho^{-(m+1)\gamma}\varphi]\cdot [\rho^{(m+1)\gamma} w^{-{m-1}} \nabla w]  +[\rho^{m\gamma} w^{-m}]\cdot [\rho^{-m\gamma}  \nabla \varphi] \quad  \forall \varphi \in C^\infty_0(\Omega).
				\end{aligned}
			\end{equation}
			Using the above identity as well as
			$$
			\frac{1}{\theta}\underbrace{=}_{\eqref{def:vartheta}} \frac{1}{\vartheta}+\frac{1}{\infty}+\frac{1}{\overline p} \quad \text{and}\quad   \frac{1}{\theta}=\frac{1}{\theta}+\frac{1}{\infty} ,
			$$
			we deduce from the generalized  H\"{o}lder's inequality that 
			\begin{equation}\label{cse:eq0}
				\begin{split}
					\|\nabla(w^{-m}  \varphi)\|_{L^{\theta}(\Omega)} 
					&\leq   m\|\rho^{-(m+1)\gamma}\varphi\|_{L^{\vartheta}(\Omega)}\, \|\rho^{(m+1)\gamma} w^{-m-1}\|_{L^\infty(\textup{supp}(\varphi))} \|\nabla w \|_{L^{\overline{p}}(\Omega)}\\
					&\quad \, +\|\rho^{-m\gamma}\nabla \varphi\|_{L^{\theta}(\Omega)}  \|\rho^{m\gamma} w^{-m}\|_{L^\infty(\textup{supp}(\varphi))}\\
					& \!\!\!\underbrace{\leq}_{\eqref{assump:weight}}  m \|\rho^{-(m+1)\gamma}\varphi\|_{L^{\vartheta}(\Omega)}C_\gamma^{-(m+1)} \|\nabla w \|_{L^{\overline{p}}(\Omega)}+ C_\gamma^{-m}\|\rho^{-m\gamma}\nabla \varphi\|_{L^{\theta}(\Omega)}  \\
					&\!\!\!\underbrace{\leq}_{\eqref{def:Z}} \max\big\{mC_\gamma^{-(m+1)} \|\nabla w \|_{L^{\overline p}(\Omega)},\, C_\gamma^{-m}\big\}\|\varphi\|_{ \mathcal{Z}^\theta_w(\Omega)} \quad \forall \varphi\in C_0^\infty({\Omega})
				\end{split}
			\end{equation}
			and 
			\begin{equation}\begin{split}
					\|w^{-m}  \varphi\|_{L^{\theta}(\Omega)}&\leq  \|\rho^{-(m+1)\gamma}\varphi\|_{L^{\theta}(\Omega)}  \|\rho^{(m+1)\gamma} w^{-(m+1)}\|_{L^\infty(\textup{supp}(\varphi))}\|w\|_{C(\overline \Omega)} \\ 
					&\leq C_\gamma^{-(m+1)}\|w\|_{C(\overline \Omega)} \norm{\varphi}_{\mathcal{Z}^\theta_w(\Omega)} \quad \forall \varphi\in C_0^\infty({\Omega}).
				\end{split}
			\end{equation}
			In conclusion, the continuous embedding \eqref{W embed Gammar} holds. 
		\end{proof}

		\subsection{Conditional stability estimates}
		
		For the rest of this article, let Assumption \ref{Assump: regularity} hold and $\overline p\in (2,\infty]$ be as in Lemma \ref{thm: well-posed N}.  For every given nonnegative coefficient  $q \in L^r(\Omega)$ of the nonlinearity, we make use of the standing notation $u=u(q) \in H^1_0(\Omega)\cap C(\overline \Omega) \cap W^{1,\overline p}(\Omega)$ to denote the unique weak solution of \eqref{eq: main}. Furthermore, in the sequel, we consider the following two admissible sets for the coefficient $q$:  
		\begin{equation}  
			    \mathcal{A}:= \left\{q\in L^\infty(\Omega)  : \, { \underline q \leq q\leq \overline q } \ \textrm{ a.e. in  }   \Omega\right\} \label{def:adimisble}
			\  \textrm{and} \ 
			\widetilde{\mathcal{A}}_p:=  \{q \in \mathcal{A} \cap W^{1,p}(\Omega) :    \| q\|_{W^{1,p}(\Omega)}\leq \widetilde M  \}
		\end{equation}
		for some prescribed constants  {$0 \le \underline q \le \overline q  < \infty$}, $p\in [2,\infty)$ and $\widetilde M>0$.

		\begin{assumption} \label{Assump: cse}  Let Assumption \ref{Assump: regularity} be satisfied. Suppose that the   weak solution $ u(q^\dag)$  of \eqref{eq: main} associated with the true coefficient $q^\dag \in \mathcal A$ satisfies 
			\begin{equation}\label{positive}
				\exists C_\gamma,\gamma>0 \quad \forall x \in \overline \Omega: \quad	u(q^\dag)(x)\geq C_\gamma \rho(x)^\gamma  \quad    \forall x \in \overline \Omega
			\end{equation}
			with the distance function $\rho(x)$ as defined in \eqref{distance function}.
			
		\end{assumption}
		
		
		\begin{remark}\label{remark:trivail}
			Note that Assumption \ref{Assump: cse} is inspired by \cite{jin2022convergence}, who focused on the linear case  
			\begin{equation}\label{eq: main3}
				    -\Delta u + qu =f\quad \text{ in }\Omega\quad\textup{and}\quad
				u  =0 \quad\text{ on }\p \Omega,
			\end{equation}
			and  proved in \cite[Theorem 2.1]{jin2022convergence} the following  conditional stability estimate:  
			\begin{equation}\label{cselinear}
				\|q-q^\dag\|_{L^2(\Omega)}\leq C \|u(q)-u(q^\dag)\|_{H^1_0(\Omega)}^{\frac{1}{2+4\gamma}}
				\quad\forall q\in \widetilde{\mathcal{A}}_2.
			\end{equation}
			In the upcoming theorem, we prove our main result on the conditional stability estimate for the nonlinear case \eqref{eq: main}. In particular, for $m=1$, our result improves the convergence order \cite[Theorem 2.1]{jin2022convergence} to the order $\frac{1}{1+2\gamma}$. 
		\end{remark}

		Note that Assumption \ref{Assump: cse}  allows us to set $w=u^\dag:=u(q^\dag)$ in Definition \ref{def: weightedspace}. Furthermore, for  any given $p \in   [2,\overline p]\cap[2,\infty)$, its conjugate exponent satisfies $p' = \frac{p}{p-1}\in (1,2]$ such that the choice $\theta=p'$ is also allowed for Definition \ref{def: weightedspace}. Altogether, employing the weighted space $\mathcal X^{p'}_{u^\dag}(\Omega)$ and all analytical tools from the previous section, we are in a position to prove the following key lemma:
		
		\begin{lemma}\label{lemma:W dual}
			Let Assumptions \ref{Assump: regularity} and \ref{Assump: cse} be satisfied    and $\kappa:= m\gamma+\gamma+n/\overline{p}$ with $\overline{p}$ from Lemma \ref{thm: well-posed N}. Then, for every $p \in [2,\overline{p}] \cap [2,\infty)$,  there exists a constant $C>0$  such that 
			\begin{align}
				\|\varphi\|_{W^{-1,p}(\Omega)}&\leq\begin{cases}
					C\|\varphi\|_{\mathcal{X}^{p'}_{u^\dag}(\Omega)^*} \quad\quad\quad\quad\,\,\,\,\forall \varphi\in    L^{p}(\Omega)  \quad&\textup{if}\quad \kappa\in (0,1], \\ 
					C\|\varphi\|_{\mathcal{X}^{p'}_{u^\dag}(\Omega)^*}^{\frac{1}{\kappa}}\|\varphi\|_{L^{p}(\Omega)}^{\frac{\kappa-1}{\kappa}}
					\quad \forall \varphi\in L^{p}(\Omega) 
					\quad&\textup{if}\quad \kappa>1,
				\end{cases}\label{Gammar:interp1}
			\end{align}
			and 
			\begin{align}
				\|\varphi\|_{L^{p'}(\Omega)}\leq
				C\|\varphi\|_{\mathcal{X}^{p'}_{u^\dag}(\Omega)^*}^{\frac{1}{1+\kappa}}\|\varphi\|_{W^{1,p}(\Omega)}^\frac{\kappa}{1+\kappa} \quad \forall \varphi\in W^{1,p}(\Omega).\label{Gamma:interp2} 
			\end{align}
	
		\end{lemma}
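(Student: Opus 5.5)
The core of the argument is a single embedding: we aim to show
\[
W^{\kappa,p'}_0(\Omega)\embed \mathcal{X}^{p'}_{u^\dag}(\Omega)
\qquad\text{(with $W^{1,p'}_0$ in place of $W^{\kappa,p'}_0$ if $\kappa\le 1$)}.
\]
Dualizing gives $\|\varphi\|_{W^{-\kappa,p}(\Omega)}\le C\|\varphi\|_{\mathcal{X}^{p'}_{u^\dag}(\Omega)^*}$ for $\varphi\in L^p(\Omega)$. Here $L^p\embed\mathcal X^*$ follows from \eqref{Gammar embed}, since $(p')'=p$, so $L^p\embed\mathcal X^{p'}_{u^\dag}(\Omega)^*$.

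Once this is available, both estimates follow from Lemma \ref{lemma: interpolation}.
\begin{itemize}
\item For \eqref{Gammar:interp1} with $\kappa>1$, apply \eqref{lemma:interpolation1} with $s_0=\kappa$ and replace $\|\varphi\|_{W^{-\kappa,p}}$ by the dual $\mathcal X$-norm.
\item For $\kappa\le 1$, the embedding $W^{1,p'}_0\embed W^{\kappa,p'}_0\embed\mathcal X$ directly gives $\|\varphi\|_{W^{-1,p}}\le C\|\varphi\|_{\mathcal X^*}$.
\item For \eqref{Gamma:interp2}, apply \eqref{lemma:interpolation2} with $s_1=\kappa$ in the exponent $p$. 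This yields $\|\varphi\|_{L^p}\le C\|\varphi\|_{W^{-\kappa,p}}^{1/(1+\kappa)}\|\varphi\|_{W^{1,p}}^{\kappa/(1+\kappa)}$. The left-hand side $L^{p'}$ norm is then bounded by the $L^p$ norm, because $p'\le 2\le p$ and $\Omega$ is bounded. (This slightly wasteful step is presumably how the stated form arises.)
\end{itemize}

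To prove the embedding, use Lemma \ref{lemma:W dual0} with $\theta=p'$ and $w=u^\dag$, which is admissible by Assumption \ref{Assump: cse} and Lemma \ref{thm: well-posed N}. It therefore suffices to bound the two pieces of the $\mathcal Z^{p'}_{u^\dag}$-norm by $\|\varphi\|_{W^{\kappa,p'}}$ for $\varphi\in C_0^\infty(\Omega)$, and then pass to closures.

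The gradient term $\|\rho^{-m\gamma}\nabla\varphi\|_{L^{p'}}$ is handled by Lemma \ref{lemma:hardy} applied to $\nabla\varphi$ with $s=m\gamma$. This gives a bound by $\|\nabla\varphi\|_{W^{m\gamma,p'}}\le\|\varphi\|_{W^{1+m\gamma,p'}}$, and $1+m\gamma\le\kappa$ needs checking. In fact $\kappa=m\gamma+\gamma+n/\overline p$ need not exceed $1+m\gamma$, so the careful route is to bound $\rho^{-m\gamma}\nabla\varphi$ as the derivative part coming from the first term. A cleaner option is to note that $\|\rho^{-m\gamma}\nabla\varphi\|$ is controlled via weighted Hardy \eqref{Hardy:weighted} by higher weighted derivatives. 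If that bookkeeping fails, I would instead directly estimate $\|u^{\dag,-m}\varphi\|_{W^{1,p'}}$ by fractional product and Hardy estimates at order $\kappa$.

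The zero-order term $\|\rho^{-(m+1)\gamma}\varphi\|_{L^{\vartheta}}$ has $\vartheta=(1/p'-1/\overline p)^{-1}>p'$. Here I would first use the fractional Sobolev embedding $W^{n/\overline p,p'}_0\embed L^\vartheta$, which loses exactly $n/\overline p$ derivatives since $1/\vartheta=1/p'-(n/\overline p)/n$. Combining this with Lemma \ref{lemma:hardy}-type weighted estimates with $s=(m+1)\gamma$ yields control by $\|\varphi\|_{W^{(m+1)\gamma+n/\overline p,p'}}=\|\varphi\|_{W^{\kappa,p'}}$.

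The main obstacle is making rigorous this commutation of the weight $\rho^{-(m+1)\gamma}$ with the Sobolev embedding, i.e. proving $\|\rho^{-s}\varphi\|_{L^\vartheta}\lesssim\|\varphi\|_{W^{s+n/\overline p,p'}}$. I would try the route $\|\rho^{-s}\varphi\|_{L^\vartheta}\lesssim\|\rho^{-s}\varphi\|_{W^{n/\overline p,p'}_0}$ and then estimate the fractional seminorm of the product $\rho^{-s}\varphi$ by Hardy inequalities \eqref{Hardy:weighted}, \eqref{Hardy:fractional} applied to derivatives of $\varphi$. In parallel, the gradient term must be checked to fit within order $\kappa$ in the case $n/\overline p+\gamma<1$.
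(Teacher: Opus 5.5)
Your skeleton matches the paper's. You reduce via Lemma \ref{lemma:W dual0} to bounding the two pieces of the $\mathcal Z^{p'}_{u^\dag}(\Omega)$-norm by $\|\varphi\|_{W^{\kappa,p'}(\Omega)}$, dualize to $\mathcal X^{p'}_{u^\dag}(\Omega)^*\embed W^{-\kappa,p}(\Omega)$, and finish with Lemma \ref{lemma: interpolation}. Your remark that \eqref{Gamma:interp2} really comes out with $L^p$ on the left and is then weakened to $L^{p'}$ is correct.

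The gap is that the embedding $W^{\kappa,p'}_0(\Omega)\embed\mathcal Z^{p'}_{u^\dag}(\Omega)$, which is the whole content of the lemma, is never proved. For the zero-order term you apply the Sobolev embedding before the Hardy inequality. That forces you to commute the weight $\rho^{-(m+1)\gamma}$ through a fractional norm, i.e. a product estimate with the merely Lipschitz $\rho$, and you stop there. The paper reverses the order, and then nothing needs to be commuted. Lemma \ref{lemma:hardy} is valid for every integrability exponent in $(1,\infty)$, so with exponent $\vartheta$ it gives $\|\rho^{-(m+1)\gamma}\varphi\|_{L^{\vartheta}(\Omega)}\le C\|\varphi\|_{W^{(m+1)\gamma,\vartheta}(\Omega)}$. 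The \emph{unweighted} embedding $W^{\kappa,p'}_0(\Omega)\embed W^{(m+1)\gamma,\vartheta}_0(\Omega)$ then holds because $\vartheta\ge p'$ and $\kappa-n/p'=(m+1)\gamma-n/\vartheta$.

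For the gradient term you assert that $\kappa$ need not exceed $1+m\gamma$, and you offer only unspecified alternatives. Under the lemma's hypotheses this is false, because $\gamma+n/\overline p\ge 1$ is forced:
\begin{itemize}
\item If $\overline p\le n$, this is trivial.
\item If $\overline p>n$, then $u^\dag\in W^{1,\overline p}(\Omega)\embed C^{0,1-n/\overline p}(\overline\Omega)$ together with $u^\dag=0$ on $\partial\Omega$ gives $u^\dag(x)\le C\rho(x)^{1-n/\overline p}$. This is compatible with \eqref{positive} as $\rho(x)\to 0$ only if $\gamma\ge 1-n/\overline p$.
\end{itemize}
Hence $\kappa\ge 1+m\gamma$, so $W^{\kappa,p'}_0(\Omega)\embed W^{1+m\gamma,p'}_0(\Omega)$. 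Your own application of Lemma \ref{lemma:hardy} with $s=m\gamma$ to $\nabla\varphi$ then closes this term; the paper uses this embedding without comment.

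The same inequality shows $\kappa>1$ always, so the branch $\kappa\in(0,1]$ is vacuous. That is just as well, because the embedding $W^{\kappa,p'}_0(\Omega)\embed\mathcal X^{p'}_{u^\dag}(\Omega)$ you invoke there for $\kappa\le 1$ is unjustified. It cannot be expected either, since the $\mathcal X$-norm contains a full derivative of $(u^\dag)^{-m}\varphi$ with a weight that is singular at $\partial\Omega$.
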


		\begin{proof} 
			Let $p \in [2,\overline{p}]\cap[2,\infty)$ and $\vartheta :=(1/{p'} -1/{\overline p})^{-1} \in (1,\infty)$. Then, Lemma \ref{lemma:hardy} yields 
			\begin{align}
				\|\rho^{-(m+1)\gamma}\varphi\|_{L^{ \vartheta}(\Omega)}&\leq C\|\varphi\|_{W^{(m+1)\gamma, \vartheta}(\Omega)} \quad \forall \varphi\in C_0^\infty(\Omega) \\
				\|\rho^{-m\gamma}\nabla \varphi\|_{L^{p'}(\Omega)^n}&\leq C\|\varphi\|_{W^{m\gamma+1,p'}(\Omega)} \quad \forall \varphi\in C_0^\infty(\Omega)
			\end{align}
			with a constant $C>0$ independent of $\varphi$. Thus, along with the definition \eqref{def:Z} with $\theta=p'$, it follows that 
			$$
			\|\varphi\|_{\mathcal{X}^{p'}_{u^\dag}(\Omega)} \le C\big( \|\varphi\|_{W^{(m+1)\gamma, \vartheta}(\Omega)} +\|\varphi\|_{W^{m\gamma+1,p'}(\Omega)}\big) \quad \forall \varphi\in C_0^\infty(\Omega),
			$$
			implying
			\begin{equation}\label{embeddingZ}
				W_0^{(m+1)\gamma,\vartheta}(\Omega) \cap W_0^{m\gamma+1, p'}(\Omega) \embed  \mathcal{Z}^{p'}_{u^\dag}(\Omega) \embed \mathcal{X}^{p'}_{u^\dag}(\Omega).
			\end{equation}
			By definition, we have $\kappa= (m+1)\gamma +n/\overline{p}$ and $1/\vartheta =  1/{p'} -1/{\overline p}$, and so 
			$
			\kappa - n/p'=   (m+1)/\gamma - n/\vartheta.
			$
			Thus, the Sobolev embedding theorem (see \cite[(1.301)]{triebel2006theory}) implies $ W_0^{\kappa, p'}(\Omega)\embed W^{(m+1)\gamma,\vartheta}_0(\Omega)$. In combination with \eqref{embeddingZ}, we obtain that
			$$
		W^{\kappa,p'}_0(\Omega)\embed W_0^{(m+1)\gamma,\vartheta}(\Omega)\cap W_0^{m\gamma+1, p'}(\Omega) \embed \mathcal{X}^{p'}_{u^\dag}(\Omega),
			$$
			and so
			\begin{equation}\label{W:embed2}
				\mathcal{X}^{p'}_{u^\dag}(\Omega)^* \embed 	W^{-\kappa,p}(\Omega).
			\end{equation}
			In the case of $\kappa\in (0,1]$, the estimate  \eqref{Gammar:interp1} follows directly from  the  embeddings \eqref{W:embed2} and \eqref{Gammar embed} (with $\theta=p'$ and $w= u^\dag$) as well as  $W^{-\kappa,p}(\Omega)\embed W^{-1,p}(\Omega)$.  If $\kappa >1$, it holds that 
			
			\begin{equation}
				\|\varphi\|_{W^{-1,p}(\Omega)}
				\underbrace{\leq}_{\text{Lem. \ref{lemma: interpolation}} } C \|\varphi\|_{W^{-\kappa,p}(\Omega)}^{\frac{1}{\kappa}} \|\varphi\|_{L^{p}(\Omega)}^{\frac{\kappa-1}{\kappa}}\underbrace{\leq}_{\eqref{W:embed2}} C\|\varphi\|_{\mathcal{X}^{p'}_{u^\dag}(\Omega)^*}^{\frac{1}{\kappa}}\|\varphi\|_{L^{p}(\Omega)}^{\frac{\kappa-1}{\kappa}}
				\quad \forall \varphi\in L^{p}(\Omega), 
			\end{equation}
			which is exactly \eqref{Gammar:interp1} for the case of $\kappa>1$. By 
			\eqref{W:embed2} and inequality \eqref{lemma:interpolation2} (with $s_1=\kappa$),  
			the estimate \eqref{Gamma:interp2} follows.
		\end{proof}

		\begin{theorem}\label{thm: cse}
			Let Assumptions \ref{Assump: regularity} and \ref{Assump: cse} be satisfied and $\kappa:= m\gamma+\gamma+n/\overline p$ with $\overline{p}$ as in Lemma \ref{thm: well-posed N}.  
            Then,  for every $p \in [2,\overline p]\cap[2,\infty)$, there exists a constant $C>0$  such that 
			\begin{align}
				&\big\|q-q^\dag \big\|_{W^{-1,p}(\Omega)}\leq
				\begin{cases}
					C \big\|u(q)-u(q^\dag)\big\|_{W^{1,p}(\Omega)} \quad \forall q\in \mathcal{A} \quad\textup{if } \kappa\in (0,1],\\
					C \big\|u(q)-u(q^\dag)\big\|^{\frac{1}{\kappa}}_{W^{1,p}(\Omega)}  \quad \forall q\in \mathcal{A} \quad\textup{if } \kappa\in (1,\infty), \label{cse:W-1p}
				\end{cases}\\
				&\big\|q-q^\dag \big\|_{L^p(\Omega)}  \leq C\big\|u(q)-u(q^\dag)\big\|^{\frac{1}{1+\kappa}}_{W^{1,p}(\Omega)}  \quad \forall q\in \widetilde{\mathcal{A}}_p. \label{eq:cse:Lp}
			\end{align} 
			
		\end{theorem}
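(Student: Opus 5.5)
By Lemma \ref{lemma:W dual}, it suffices to bound the dual norm $\|q-q^\dag\|_{\mathcal{X}^{p'}_{u^\dag}(\Omega)^*}$ by $C\|u(q)-u(q^\dag)\|_{W^{1,p}(\Omega)}$. All other quantities appearing in \eqref{Gammar:interp1} and \eqref{Gamma:interp2} are bounded uniformly on $\mathcal A$ or $\widetilde{\mathcal A}_p$: we have $\|q-q^\dag\|_{L^p}\le C(\overline q,|\Omega|)$ on $\mathcal A$, and $\|q-q^\dag\|_{W^{1,p}}\le \widetilde M+\|q^\dag\|_{W^{1,p}}$ on $\widetilde{\mathcal A}_p$. (For \eqref{eq:cse:Lp} we also need $q^\dag\in W^{1,p}$, which we tacitly assume; otherwise we use $q^\dag\in\widetilde{\mathcal A}_p$.) Once this bound holds, \eqref{cse:W-1p} follows from \eqref{Gammar:interp1}. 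Likewise \eqref{Gamma:interp2} gives an $L^{p'}$ bound with exponent $1/(1+\kappa)$. To upgrade from $L^{p'}$ to $L^p$ with the same exponent, I would interpolate $L^p$ between $L^{p'}$ and $L^\infty$, using $|q-q^\dag|\le \overline q$. This loses a power, since $\|v\|_{L^p}\le \|v\|_{L^{p'}}^{1/(p-1)}\|v\|_\infty^{(p-2)/(p-1)}$. Because of that loss, it is cleaner to rerun the proof of Lemma \ref{lemma:W dual} so as to get \eqref{lemma:interpolation2} directly in $L^p$: apply \eqref{W:embed2} together with Lemma \ref{lemma: interpolation} in the $L^p$ scale. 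Since $\mathcal X^{p'}_{u^\dag}(\Omega)^*\embed W^{-\kappa,p}(\Omega)$, this yields $\|v\|_{L^p}\le C\|v\|_{W^{-\kappa,p}}^{1/(1+\kappa)}\|v\|_{W^{1,p}}^{\kappa/(1+\kappa)}$, which is exactly what is needed.

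The key step is the duality estimate. Write $u=u(q)$ and $u^\dag=u(q^\dag)$. Subtracting the weak formulations gives, for every test function $\psi\in C_0^\infty(\Omega)$,
\[
\int_\Omega (q-q^\dag)(u^\dag)^m\psi\,dx=\int_\Omega \sigma\nabla(u^\dag-u)\cdot\nabla\psi\,dx+\int_\Omega q\,(u^m-(u^\dag)^m)\psi\,dx .
\]
Given $\varphi\in C_0^\infty(\Omega)$, choose $\psi=(u^\dag)^{-m}\varphi$. This lies in $W^{1,p'}_0$ with compact support, since $u^\dag\ge C_\gamma\rho^\gamma>0$ on $\operatorname{supp}\varphi$ and $u^\dag\in W^{1,\overline p}$, so it is an admissible test function after density. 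Moreover $\|\psi\|_{W^{1,p'}}=\|\varphi\|_{\mathcal X^{p'}_{u^\dag}}$ by definition. Then $\int(q-q^\dag)\varphi$ is bounded by $\overline\lambda\|\nabla(u-u^\dag)\|_{L^p}\|\nabla\psi\|_{L^{p'}}$ plus the nonlinear term. Using \eqref{difference binomial}, $|u^m-(u^\dag)^m|\le C|u-u^\dag|$, with $C$ depending on the uniform $C(\overline\Omega)$ bounds from \eqref{est: well 0}. So the nonlinear term is at most $\overline q\,C\|u-u^\dag\|_{L^p}\|\psi\|_{L^{p'}}$. Taking the supremum over $\varphi$ gives $\|q-q^\dag\|_{(\mathcal X^{p'}_{u^\dag})^*}\le C\|u-u^\dag\|_{W^{1,p}}$, using the density of $C_0^\infty$ in $\mathcal X^{p'}_{u^\dag}$ and the identification $L^{p}\embed(\mathcal X^{p'}_{u^\dag})^*$ from \eqref{Gammar embed}.

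I expect the main obstacle to be justifying the test function $\psi=(u^\dag)^{-m}\varphi$ in the weak formulation. The weak form is stated for $H^1_0$ test functions, whereas $\psi$ is only in $W^{1,p'}$ with $p'\le 2$. However, $\psi$ has compact support, and on that support $u^\dag$ is bounded below and $\nabla u^\dag\in L^{\overline p}$. Extending the weak form to $W^{1,p'}_0$ test functions is legitimate because $u,u^\dag\in W^{1,p}$ and the lower-order terms lie in $L^\infty$. This fixes the requirement $p\le\overline p$. Finally, the constants must be tracked uniformly in $q\in\mathcal A$, which holds because \eqref{est: well 0} and \eqref{est: well 1} depend only on $\overline q$.
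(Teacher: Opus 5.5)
Your proposal is correct and follows essentially the same route as the paper: test the difference of the two weak formulations with $(u^\dag)^{-m}\varphi$, obtain $\|q-q^\dag\|_{\mathcal{X}^{p'}_{u^\dag}(\Omega)^*}\le C\|u(q)-u(q^\dag)\|_{W^{1,p}(\Omega)}$ uniformly on $\mathcal A$, and conclude with Lemma~\ref{lemma:W dual} together with the uniform bounds on $\|q-q^\dag\|_{L^p(\Omega)}$ and $\|q-q^\dag\|_{W^{1,p}(\Omega)}$. Your side remarks are accurate: the $L^p$ rather than $L^{p'}$ version of \eqref{Gamma:interp2} is exactly what \eqref{W:embed2} and \eqref{lemma:interpolation2} yield, and the paper's constant $2\widetilde M$ tacitly assumes $q^\dag\in\widetilde{\mathcal A}_p$. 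The sign slip in your nonlinear term, which should read $q\,((u^\dag)^m-u^m)$, is harmless since you immediately take absolute values.
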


		\begin{proof} 
			Let $p \in [2,\overline p]\cap[2,\infty)$ and $q \in \mathcal A$. 
			From the weak formulations of $u(q)$ and $u(q^\dag)$, it follows for all $v\in W^{1,p'}_0(\Omega)$ that 
			\begin{equation}\label{weak id2 in sec 4}
				\begin{split}
					& \int_{\Omega} (q^\dag-q) u(q^\dag)^m v \, dx =\int_{\Omega} \big[  \sigma \nabla (u(q)-u(q^\dag)) \cdot \nabla v   +  q (u(q)^m-u(q^\dag)^m) v \big] \, dx \\
					&\quad \underbrace{=}_{\eqref{difference binomial}} \int_{\Omega} \sigma \nabla (u(q)-u(q^\dag)) \cdot \nabla v + {q}\bigg( \sum_{k=0}^{m-1}u(q)^k u(q^\dag)^{{m-1-k}} \bigg)(u(q)-u(q^\dag)) v \,  dx.
				\end{split}
			\end{equation}
			Furthermore, by Lemma \ref{thm: well-posed N},   \eqref{def:adimisble},  we have 
			$C_{\mathcal{A}}:=\sup\limits_{q\in \mathcal{A}} \|u(q)\|_{C(\overline{\Omega})}  < \infty$, which together with \eqref{weak id2 in sec 4} and \eqref{ellipticity} implies that 
			\begin{equation}\label{weak id3 in sec 4}
				\begin{split}
					\int_{\Omega} (q^\dag-q) u(q^\dag)^m v \, dx &
					\leq   \underbrace{\big(n\max_{1\leq i,j\leq n}\|\sigma_{i,j}\|_{L^\infty(\Omega)}  +m\overline{q} C^{m-1}_\mathcal{A}\big)}_{=:\hat C}\|u(q)-u(q^\dag)\|_{W^{1,p}(\Omega)}\| v\|_{W^{1,p'}(\Omega)} 
				\end{split}
			\end{equation}
			holds for all $ v\in W_0^{1,p'}(\Omega)$. In view of Assumption \ref{Assump: cse} and since $p' = \frac{p}{p-1} \in (1,2]$, the weighted space $\mathcal{X}^{p'}_{u^\dag}(\Omega)$ as in Definition \ref{def: weightedspace} is well-defined for $w=u(q^\dag)$ and $\theta= p'$.  For every arbitrarily fixed $\varphi \in \mathcal{X}^{p'}_{u^\dag}(\Omega)$,  the construction of $\mathcal{X}^{p'}_{u^\dag}(\Omega)$ (see \eqref{def:weighted_space} with $w=u(q^\dag)$ and $\theta=p'$) implies the existence of a sequence of $\{\varphi_k\}_{k\in \mathbb N}\subset C_0^\infty(\Omega)$ such that $u(q^\dag)^{-m}\varphi_k\in W_0^{1,p'}(\Omega)$ and  $\varphi_k\to \varphi$ in  $\mathcal{X}^{p'}_{u^\dag}(\Omega)$ as $k\to \infty$. Thus, we may set  $v=u(q^\dag)^{-m} \varphi_k$ in \eqref{weak id3 in sec 4} to obtain that
			\begin{equation}\label{pre:condi12:0}
				\left|\int_{\Omega} (q-q^\dag)\varphi_k \, dx \right|\leq   \hat C \big\|u(q)-u(q^\dag)\big\|_{W^{1,p}(\Omega)}  \|\varphi_k\|_{\mathcal{X}^{p'}_{u^\dag}(\Omega)} \quad \forall  k \in \mathbb N \quad  \forall q \in \mathcal A.
			\end{equation} 
			Thus, in view of the continuous embedding \eqref{Gammar embed}, we may pass to the limit $k \to \infty$ in \eqref{pre:condi12:0} to obtain
			\begin{equation}\label{pre:condi12}
				\left|\int_{\Omega} (q-q^\dag)\varphi \, dx \right|\leq \hat C  \big\|u(q)-u(q^\dag)\big\|_{W^{1,p}(\Omega)}  \|\varphi \|_{\mathcal{X}^{p'}_{u^\dag}(\Omega)} \quad \forall  q\in \mathcal A.
			\end{equation}
			Since $\varphi \in \mathcal{X}^{p'}_{u^\dag}(\Omega)$ was chosen arbitrarily, it follows that  
			\begin{equation}\label{cse:Gamma}
				\|q-q^\dag\|_{\mathcal{X}^{p'}_{u^\dag}(\Omega)^*} = \sup_{ \varphi\neq0} \frac{\left|\int_{\Omega} (q-q^\dag)\varphi \, dx \right|}{ \|\varphi \|_{\mathcal{X}^{p'}_{u^\dag}(\Omega)}}\leq \hat C   \big\|u(q)-u(q^\dag)\|_{W^{1,p}(\Omega)} \quad\forall q\in \mathcal{A}.
			\end{equation} 
			
			On the one hand, for the case $\kappa\in (0,1)$, the estimate \eqref{cse:W-1p} follows immediately from 
			\eqref{cse:Gamma} and  \eqref{Gammar:interp1}.       
			On the other hand, for the case $\kappa>1$,  we have that  \begin{equation}
				\begin{split}
					\|q-q^\dag\|_{W^{-1,p}(\Omega)}&\underbrace{\leq}_{\eqref{Gammar:interp1}} C \|q-q^\dag\|_{\mathcal{X}^{p'}_{u^\dag}(\Omega)^*}^{\frac{1}{\kappa}}\|q-q^\dag\|_{L^{p}(\Omega)}^{\frac{\kappa-1}{\kappa}} 
					\underbrace{\leq}_{\eqref{cse:Gamma}}  C  \hat C^{\frac{1}{\kappa}} \|u(q)-u(q^\dag)\big\|^{\frac{1}{\kappa}}_{W^{1,p}(\Omega)}  \|q-q^\dag\|_{L^{p}(\Omega)}^{\frac{\kappa-1}{\kappa}} \\
					&\ \, \leq C  \hat C^{\frac{1}{\kappa}} 2\overline{q}|\Omega|^{\frac{\kappa-1}{\kappa}}\|u(q)-u(q^\dag)\big\|^{\frac{1}{\kappa}}_{W^{1,p}(\Omega)} \quad \forall q \in \mathcal A, 
				\end{split}
			\end{equation}
			leading to \eqref{cse:W-1p}.  Similarly, the combination of \eqref{cse:Gamma} and
			\eqref{Gamma:interp2} shows for all $q\in \widetilde{\mathcal{A}}$ that 
			\begin{align}
				\|q-q^\dag\|_{L^{p}(\Omega)}&\underbrace{\leq}_{\eqref{Gamma:interp2}}
				C\|q-q^\dag\|_{\mathcal{X}^{p'}_{u^\dag}(\Omega)^*}^{\frac{1}{1+\kappa}}\|q-q^\dag\|_{W^{1,p}(\Omega)}^\frac{\kappa}{1+\kappa}  
				\underbrace{\leq}_{ \eqref{cse:Gamma}} C \hat C^{\frac{1}{1+\kappa}} (2\widetilde M)^\frac{\kappa}{1+\kappa} \|u(q)-u(q^\dag)\big\|_{W^{1,p}(\Omega)}^{\frac{1}{1+\kappa}},
			\end{align}
			where we have used the uniform boundedness of $q\in \widetilde{\mathcal{A}}_p$ in $W^{1,p}(\Omega)$. This gives \eqref{eq:cse:Lp} and hence completes the proof.
		\end{proof}

		\subsection{Verification of Assumption \ref{Assump: cse}}	
		
		This subsection is devoted to the verification of Assumption \ref{Assump: cse} on a convex domain. The 
		proof relies on the following pointwise lower bound estimate. 
		
		\begin{lemma}\label{lemma:low:est}  Let Assumption \ref{Assump: regularity}  hold and   $q \in C(\overline \Omega)\cap C^\vartheta(\Omega)$ for some $\vartheta >0$   be nonnegative. Suppose additionally that $\Omega$ is   convex,   $f \in C(\overline \Omega)\cap C^\vartheta(\Omega)$ is non-negative, $m\ge2$, and   $\sigma  \in C^{\infty}({\Omega})^{n\times n}$. Then, the unique weak solution $u(q)$ to \eqref{eq: main} satisfies
			\begin{equation}
				u(q)(x)\geq H(x)\bigg(1+(m-1)\int_{\Omega} G(x,y) H(y)^m q(y)\,  dy\bigg)^{-\frac{1}{m-1}}\quad \forall x\in \Omega,
			\end{equation}
			where $H(x):=\int_{\Omega} G(x,y)f(y)\, dy$.
		\end{lemma}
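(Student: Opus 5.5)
The plan is a sub-solution argument combined with the comparison principle of Lemma \ref{lemma:MP}. Here $G$ is the Green function of $-\nabla\cdot(\sigma\nabla\cdot)$ with homogeneous Dirichlet data, so $H\ge 0$ solves $-\nabla\cdot(\sigma\nabla H)=f$, $H=0$ on $\p\Omega$. Set
\[
W(x):=\int_\Omega G(x,y)H(y)^m q(y)\,dy \ge 0,
\]
which solves $-\nabla\cdot(\sigma\nabla W)=qH^m$ with $W=0$ on $\p\Omega$. The candidate is
\[
\underline u:=H\,\Phi(W),\qquad \Phi(t):=(1+(m-1)t)^{-\frac{1}{m-1}}.
\]
Then $\Phi(0)=1$, $\Phi'=-\Phi^m$, $\Phi''=m\Phi^{2m-1}\ge0$, and $0<\Phi\le1$.

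First I would use the regularity hypotheses. Since $f,q$ are Hölder, $\sigma$ is smooth, and $\Omega$ is convex (hence Lipschitz), Schauder theory gives $H,W\in C^2(\Omega)\cap C(\overline\Omega)$, and both vanish on $\p\Omega$. Next I compute, with $L:=-\nabla\cdot(\sigma\nabla\cdot)$,
\[
L(H\Phi(W))=\Phi(W)LH-2\Phi'(W)\,\sigma\nabla H\cdot\nabla W+H\Phi'(W)LW-H\Phi''(W)\,\sigma\nabla W\cdot\nabla W .
\]
The last term is $\le0$ by ellipticity. The third term equals $-\Phi^m qH^{m+1}=-q\underline u^m H$, which is only the wrong power of $H$ if naively compared. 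The cross term $2\Phi^m\sigma\nabla H\cdot\nabla W$ has no sign.

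This is the main obstacle: the naive product ansatz does not directly give $L\underline u+q\underline u^m\le f$. I would therefore pass to the variable $v:=u/H$ on $\{H>0\}=\Omega$ (by Lemma \ref{lemma:MP} $H>0$ when $f\not\equiv0$; the case $f\equiv 0$ is trivial since then $\underline u=0$). The operator transforms as
\[
L(Hv)=vf-\frac{1}{H}\nabla\cdot(H^2\sigma\nabla v)\cdot\frac{1}{H}\cdot H ,
\]
more precisely $H\,L(Hv)=H f v-\nabla\cdot(H^2\sigma\nabla v)$. The equation for $u$ becomes
\[
-\nabla\cdot(H^2\sigma\nabla v)+fHv+qH^{m+1}v^m=fH .
\]
For $\underline v=\Phi(W)$ I would verify the inequality $\le fH$ using $1-\Phi\ge0$ and the fact that $\Phi$ is convex decreasing. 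The divergence term gives
\[
-\nabla\cdot(H^2\sigma\Phi'\nabla W)=\Phi^m\nabla\cdot(H^2\sigma\nabla W)-H^2\Phi''\sigma\nabla W\cdot\nabla W .
\]
Expanding $\nabla\cdot(H^2\sigma\nabla W)=-H^2qH^m+2H\sigma\nabla H\cdot\nabla W$ again leaves a cross term. To control it, I would instead take the ansatz based on the Green representation, i.e.\ a Kato/Jensen-type convexity argument. By convexity of $s\mapsto s^{-(m-1)}$ and the representation $u=H-\int G\,q u^m$, the function $Y:=u/H$ satisfies $Y\ge 1-\int G q H^m Y^m/H$. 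The aim is to show $Y^{-(m-1)}\le 1+(m-1)W/H$, or the stronger $\Phi$ form as stated, via an ODE comparison along the "$H$-transform" (Doob $h$-transform) diffusion. The first thing I would try is this: in the $h$-transformed operator $L_H v:=-H^{-2}\nabla\cdot(H^2\sigma\nabla v)$ the cross terms disappear. Writing $L_H$ applied to $\widetilde W:=W/H$ gives $L_H\widetilde W=qH^{m-1}-(f/H)\widetilde W$, which is the structure needed to close an ODE comparison $\underline v=\Phi$ with the zero-order term $f/H\ge0$ helping.

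Finally, with $\underline u$ shown to be a weak sub-solution (with equality of boundary values, since both vanish), I would apply a comparison argument as in Lemma \ref{lemma:MP}(iii). Test with $(\underline u-u)^+$ and use the monotonicity \eqref{difference binomial ineq} of $t\mapsto t^m$ to conclude $u\ge\underline u$ in $\Omega$. The delicate points are the sign of the cross term, which I expect to resolve via the $h$-transform, and the justification near $\p\Omega$, where $H\to0$; this I would handle by comparing on $\Omega_\epsilon=\{H>\epsilon\}$ and letting $\epsilon\to0$.
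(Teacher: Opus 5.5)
\textbf{The printed inequality is false, and part of your proposal aims at it.} As printed, the statement has $\int_\Omega G(x,y)H(y)^m q(y)\,dy$ with no factor $1/H(x)$, and that version fails. Take $\sigma=I$, $q\equiv1$, $f\equiv\lambda>0$, and write $H=\lambda H_1$ and $W:=\int_\Omega G(\cdot,y)H(y)^m\,dy=\lambda^m W_1$. From $u=H-G(u^m)$ and $(H-W)^+\le u\le H$ you get $u=\lambda H_1-\lambda^m W_1(1+o(1))$ as $\lambda\to0$. Bernoulli's inequality, on the other hand, gives $H\Phi(W)\ge \lambda H_1-\lambda^{m+1}H_1W_1$. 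So $u\ge H\Phi(W)$ fails for small $\lambda$.

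The correct statement is $u\ge H\,\Phi(W/H)$. This is your own intermediate target $Y^{-(m-1)}\le 1+(m-1)W/H$, and it is exactly what Step 3 of the proof of Proposition \ref{prop: D1} uses. Three consequences for your proposal:
\begin{enumerate}[(i)]
\item Your first ansatz $\underline u=H\Phi(W)$ can never be a sub-solution. That, rather than a technicality, is why the cross term refuses to close.
\item Your remark that $\Phi(W)$ is ``the stronger form as stated'' is wrong: the two bounds are incomparable, and the $\Phi(W)$ one is false for small data.
\item Your final comparison step is written for that $\underline u$, so as it stands it targets an inequality that cannot be proved.
\end{enumerate}

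\textbf{The $h$-transform is the right repair, and it closes.} Set $\underline u:=H\Phi(t)$ with $t:=W/H$. With $\mathcal{L}_H v:=-H^{-2}\nabla\cdot(H^2\sigma\nabla v)$ you correctly have $L(Hv)=fv+H\mathcal{L}_H v$ and $\mathcal{L}_H t=qH^{m-1}-\frac{f}{H}t$. The chain rule $\mathcal{L}_H\Phi(t)=\Phi'(t)\mathcal{L}_H t-\Phi''(t)\,\sigma\nabla t\cdot\nabla t$, together with $\Phi'=-\Phi^m$ and $\Phi''\ge0$, gives
\[
L\underline u+q\underline u^m-f=H\Big[\mathcal{L}_H\Phi(t)+\frac{f}{H}\big(\Phi(t)-1\big)+qH^{m-1}\Phi(t)^m\Big]\le f\big(\Phi(t)+t\,\Phi(t)^m-1\big)\le 0 .
\]
The $q$-terms cancel exactly; this is where $\Phi'=-\Phi^m$ is used. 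The last inequality is the tangent-line inequality $\Phi(0)\ge\Phi(t)-t\Phi'(t)$ for the convex function $\Phi$. This is the step your proposal leaves at ``I expect''.

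For the comparison you do not need $\Omega_\epsilon$. Since $u\ge0$ and $\underline u\le H$ with $H\in C(\overline\Omega)$ vanishing on $\partial\Omega$, the function $(\underline u-u-\epsilon)^+$ has compact support in $\Omega$. Testing with it, and using that $s\mapsto s^m$ is increasing, gives $\underline u\le u+\epsilon$ for every $\epsilon>0$. Note also that positivity of $H$ for $f\ge0$, $f\not\equiv0$, needs the strong maximum principle: part (ii) of Lemma \ref{lemma:MP} assumes $f>0$ a.e.

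\textbf{Comparison with the paper.} The paper's proof only establishes regularity ($u\in W^{1,\infty}(\Omega)$ from Lemma \ref{thm: well-posed N}, then $u\in C^{2,\theta}(\Omega)$ by Schauder) and then invokes \cite[Theorem 3.1]{grigor2019pointwise} for the pointwise bound. Your repaired argument replaces that citation with a direct sub-solution construction plus a weak comparison. It needs interior Schauder regularity of $H$ and $W$ rather than of $u$, since the weak formulation suffices for $u$. It is self-contained, and it shows why the bound must be expressed through the ratio $W/H$.
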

		\begin{proof}
			By  Lemma \ref{thm: well-posed N}, it follows that $u(q) \in  H^1_0(\Omega) \cap W^{1,\infty}(\Omega)$.
			Thanks to the regularity properties  $u(q) \in W^{1,\infty}(\Omega)$ and $q,f \in C(\overline \Omega)\cap C^\vartheta(\Omega)$,  we see that  the weak solution $u(q)$ satisfies 
			\begin{equation}\label{sysuinf2}
				-\nabla \cdot (\sigma \nabla {u}(q))= f - {q} \,{u}(q)^m  \in  C(\overline \Omega)\cap C^\vartheta(\Omega)  
			\end{equation}
			Consequently, Schauder's estimate \cite[Thm 6.13]{GT2001_elliptic} implies  $u(q)\in C^{2,\theta}(\Omega)$.  Altogether, we may apply \cite[Theorem 3.1]{grigor2019pointwise}   to the   solution   $u(q)\in C^{2,\theta}(\Omega)$  and complete the proof. 	    
		\end{proof}
		\begin{remark}
			The smooth requirement $\sigma\in C^{\infty}(\overline{\Omega})^{n \times n}$ in the above lemma can be weakened to $\sigma\in W^{1,\beta}(\Omega)^{n\times n}$ for some $\beta>n$. This is possible by 
			a perturbation argument and the pointwise estimates of Green functions on convex domains by 
			\cite{gruter1982green}. 
		\end{remark}
		
		\begin{proposition}\label{prop: D1} Let Assumption \ref{Assump: regularity}  hold.
			Assume additionally that  $\Omega$ is convex with dimension $n=2,3$,   $\sigma\in C^{\infty}(\overline{\Omega})^{n \times n}$,  and   there exists a constant $c_f>0$  such that $f(x)\geq c_f$ holds for a.e. $x \in \Omega$. Then, there exists $c^\dag>0$ such that   
			\begin{equation}\label{low:uq}
				u(q)(x)\geq \begin{cases}
					c^\dag  \rho(x)^{2+\frac{1}{m-1}} & \text{ for all } x\in \overline \Omega, \  q \in \mathcal A, \text{ as }  m>1 \\
					c^\dag  \rho(x)^2 &\text{ for all } x\in \overline \Omega, \ q \in \mathcal A ,\text{ as } m=1.
				\end{cases}.
			\end{equation}
			In particular, Assumption \ref{Assump: cse} is satisfied.
		\end{proposition}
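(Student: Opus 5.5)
We reduce to the case of a constant right-hand side and a frozen coefficient, then read off the bound from Lemma \ref{lemma:low:est} together with Green function bounds on convex domains.

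\textbf{Step 1 (reduction).} Let $\underline u$ be the weak solution of \eqref{eq: main} with $f$ replaced by the constant $c_f$ and $q$ replaced by the constant $\overline q$.

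\begin{itemize}
\item Since $q\le \overline q$, $u(q)\ge0$ and $f\ge c_f$, the function $\underline u$ is a subsolution of the problem for $u(q)$.
\item Equivalently, $v:=u(q)-\underline u$ solves a linear equation. Its zero-order coefficient is $q\sum_k u(q)^k\underline u^{m-1-k}\ge0$ by \eqref{difference binomial ineq}. Its right-hand side is $f-c_f+(\overline q-q)\underline u^m\ge0$, using $\underline u\ge0$ from Lemma \ref{lemma:MP}\ref{item 1 MP}.
\item Applying Lemma \ref{lemma:MP}\ref{item 1 MP} to this linear problem (the case $m=1$) gives $u(q)\ge \underline u$ for every $q\in\mathcal A$.
\end{itemize}

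It therefore suffices to bound $\underline u$ from below, and the constant $c^\dag$ will automatically be uniform in $q\in\mathcal A$.

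\textbf{Step 2 ($m=1$).} The data $c_f$ and $\overline q$ are constants, hence H\"older continuous, and $\sigma$ is smooth. For $m=1$ I would compare $\underline u$ directly with $H$ times a fixed constant. Two ingredients are needed:
\begin{itemize}
\item the classical Green function lower bound $G(x,y)\ge c\,\rho(x)\rho(y)/|x-y|^{n}$ (or the version with $\min\{1,\cdot\}$) on convex, hence Lipschitz, domains in dimension $n=2,3$ with smooth $\sigma$, for example from Gr\"uter--Widman;
\item integrating this in $y$ gives $H(x)=c_f\int G(x,\cdot)\ge c\rho(x)$, and therefore $H\ge c\rho^2$ as well.
\end{itemize}
Writing $\underline u= H-\overline q\int G\,\underline u$ and bounding $\underline u\le C$ is too crude. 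Instead I would use the linear analogue of Lemma \ref{lemma:low:est}, namely $u\ge H\exp(-\int G H q/H)$ in the style of Grigor'yan--Verbitsky, or simply note that $\underline u$ is a positive supersolution. Even the weaker conclusion $\rho^2$ stated in the proposition suffices, so one may use $\underline u\ge c\,\rho(x)^2$, obtained from the Hopf-type bound $\underline u\ge c\rho$, which is available since $\underline u>0$ solves a uniformly elliptic equation with bounded coefficients on a convex domain.

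\textbf{Step 3 ($m>1$).} Apply Lemma \ref{lemma:low:est} with $q=\overline q$ and $f=c_f$:
\[
\underline u(x)\ge H(x)\Big(1+(m-1)\overline q\int_\Omega G(x,y)H(y)^m\,dy\Big)^{-\frac1{m-1}}.
\]
Since $H\le C$ and $\int G(x,y)\,dy\le C$, the bracket is bounded by a constant, so $\underline u\ge cH\ge c\rho(x)$. This is already stronger than $\rho^{2+1/(m-1)}$ because $\rho\le \operatorname{diam}\Omega$.

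\textbf{Main obstacle.} The hard step is the Green function lower bound $H(x)\ge c\,\rho(x)^{s}$ for some power $s\le2$ on a merely convex domain. I would try first Gr\"uter--Widman's estimate $G(x,y)\ge c|x-y|^{2-n}$ for $|x-y|\le \rho(x)/2$ (with a logarithmic version when $n=2$). Integrating only over the ball $B(x,\rho(x)/2)$ gives $H(x)\ge c\rho(x)^2$, which yields exactly the stated exponents after possibly absorbing the bracket factor. With this route the $m>1$ bound becomes $\rho^2\ge c\rho^{2+1/(m-1)}$.

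Finally, the stated bounds hold on $\overline\Omega$ (trivially on $\partial\Omega$, where $\rho=0$). Taking $q=q^\dag$ gives \eqref{positive} with $\gamma=2+\frac1{m-1}$, respectively $\gamma=2$, so Assumption \ref{Assump: cse} holds.
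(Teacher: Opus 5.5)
Your Step~1 is correct (it merges the paper's two comparisons into one), and the route in your final paragraph is exactly the paper's Step~2: integrate $G(x,y)\ge c|x-y|^{2-n}$ over $B(x,\rho(x)/2)$ to get $H\ge c\rho^2$. The gap is in Step~3, where you bound the bracket using Lemma~\ref{lemma:low:est} as printed. That statement is missing a factor $1/H(x)$. The Grigor'yan--Verbitsky estimate, in the form the paper's proof actually uses in \eqref{uq1}, has $\frac{1}{H(x)}\int_\Omega G(x,y)H(y)^m\overline q\,dy$ inside the bracket.

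The printed version cannot hold. Note that $\mu u$ solves \eqref{eq: main} with $(f,q)$ replaced by $(\mu f,\mu^{1-m}q)$. Applying the printed inequality to this and letting $\mu\to0$ would give $u\ge H$. But $u(x)=H(x)-\int_\Omega G(x,y)q(y)u(y)^m\,dy<H(x)$ for positive $f$ and $q\not\equiv0$.

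With the correct bracket, ``$H\le C$ and $\int_\Omega G(x,y)\,dy\le C$'' no longer suffices, since $H(x)^{-1}$ blows up at $\partial\Omega$. The paper bounds the integral by $C\rho(x)$ (Lipschitz regularity on the convex domain) and divides by $H\ge c\rho^2$. The bracket is then $\lesssim\rho^{-1}$, which is where the exponent $2+\frac{1}{m-1}$ comes from. Your claim can be rescued, and even improved, using positivity of $G$: $\int_\Omega G(x,y)H(y)^m\,dy\le\|H\|_{C(\overline\Omega)}^m\int_\Omega G(x,y)\,dy=\|H\|_{C(\overline\Omega)}^mH(x)/c_f$. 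So the correct bracket is at most $1+(m-1)\overline q\|H\|_{C(\overline\Omega)}^m/c_f$, and $\underline u\ge cH\ge c\rho^2$. As written, however, the step is unjustified.

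Second, the bounds $H\ge c\rho$ and $\underline u\ge c\rho$ that you invoke are false on general convex domains. This applies both to the route via $G(x,y)\ge c\rho(x)\rho(y)|x-y|^{-n}$ and to the Hopf-type lemma. Both are tied to an interior ball condition (e.g.\ a $C^{1,1}$ boundary). That condition fails at vertices and edges of convex polygons and polyhedra, which are precisely the domains of Section~\ref{sec: finite element}. For instance, take $\sigma=I$ near a vertex with interior angle $\omega\in(\pi/2,\pi)$. Then $H=O(r^{\pi/\omega})$ with $\pi/\omega>1$, so $H=o(\rho)$ along the bisector, while $\underline u\le H$.

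Hence your $m=1$ case is not proved. The Hopf route is invalid. The exponential bound $\underline u\ge H\exp\bigl(-H^{-1}\int_\Omega G(\cdot,y)\overline qH(y)\,dy\bigr)$ needs the same ratio estimate as above, which you do not supply. Your first idea there, comparing $\underline u$ with a fixed multiple of $H$, is the right one. It works for every odd $m\ge1$ without Lemma~\ref{lemma:low:est}. For $\epsilon>0$ small, $\epsilon H$ is the weak solution of \eqref{eq: main} with $q=\overline q$ and right-hand side $\epsilon c_f+\overline q\epsilon^mH^m\le c_f$. Lemma~\ref{lemma:MP}\ref{item 3 MP} then yields $\underline u\ge\epsilon H\ge\epsilon c\rho^2$. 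Combined with your Step~1 and the ball-integration bound for $H$, this proves the proposition, with exponent $2$ for all $m$.
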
   
		{\color{blue}

		}
		
		\begin{proof}[Proof]
			Since the case $m=1$ has been treated in \cite{jin2022convergence} (i.e., the linear case), we will prove the rest.
			Let us divide the proof into three steps:
			
			\noindent (Step 1.)	Let $q \in \mathcal A$ be arbitrarily fixed. By
			$\overline{u}, u_c  \in H^1_0(\Omega) \cap C(\overline \Omega)$,  we  denote, respectively,  the unique  weak solutions to 
			\begin{equation}\label{sysuinf}
				\begin{cases}
					-\nabla \cdot (\sigma \nabla \overline{u})+ \overline q \,\overline{u}^m = c_f &\text{in }\Omega \\
					\overline{u}=0 &\text{on }\p \Omega
				\end{cases},\quad
				\begin{cases}
					-\nabla \cdot (\sigma \nabla {u}_c)+ q u_c^m = c_f &\text{in }\Omega \\
					{u}_c=0 &\text{on }\p \Omega
				\end{cases} .
			\end{equation}
			Since $c_f > 0$, the maximum principle (Lemma \ref{lemma:MP}) applied to above systems yield   
			\begin{equation}\label{nonnegative}
				\overline{u} (x) >0 \quad \text{and} \quad    {u}_c(x)  >0 \quad \forall x \in \Omega.
			\end{equation}
			Moreover, since $f(x) \ge c_f $ holds for a.e. $x \in \Omega$, the comparison principle (Lemma \ref{lemma:MP} \ref{item 3 MP}) implies  that 
			\begin{equation}\label{uqunder}
				u(q)(x)\geq {u}_c(x)   \quad \forall x \in \overline   \Omega. 
			\end{equation}
			Now, we see that   $e:={u}_c-\overline{u}  \in H^1_0(\Omega) \cap C(\overline \Omega)$ is the unique weak solution to 
			\begin{equation}\label{systemeu}
					-\nabla \cdot (\sigma \nabla e)+  q \bigg(\sum_{k=0}^{m-1} {u}_c^k \overline{u}^{m-1-k}\bigg) e = (\overline q-q) \overline{u}^m  \quad \text{ in }\Omega \quad\textup{and}\quad
					e =0 \quad \text{ on }\p \Omega.
			\end{equation}
			In view of  \eqref{nonnegative} and since  $q(x)\le \overline q$ holds for a.e. $x \in \Omega$,  we have
			\begin{equation}\label{systeminf}
				(\overline q(x)- {q}(x)) \overline {u}^m(x)  \ge 0  \textrm{ for a.e. } x\in \Omega.
			\end{equation}
			Thus, applying
			the maximum principle   (Lemma \ref{lemma:MP})  to \eqref{systemeu} gives
			\begin{equation}
				e_u(x)\geq 0 \quad  \forall x \in  \overline  \Omega   \implies   {u}_c(x)\ge\overline{u}(x), \quad    \forall x \in  \overline  \Omega.  
			\end{equation}
			Combining the above inequality with \eqref{uqunder} and since $q \in \mathcal A$ was chosen arbitrarily, we infer   that 
			$$
			{u}(q)(x)\ge\overline{u}(x), \quad    \forall x \in  \overline  \Omega \quad \forall q \in \mathcal A. 
			$$
			In conclusion, the claim \eqref{low:uq}  is a consequence of
			\begin{equation}\label{low:uqinf}
				\overline{u}(x) \geq   c^\dag \rho(x)^{2+\frac{1}{m-1}}  \quad \forall x\in \overline \Omega,
			\end{equation}
			for some constant $c^\dag >0$. We will then prove \eqref{low:uqinf} by the following steps.

			\noindent (Step 2.) Let us denote by $G$ the   Green function (see \cite{fromm1993potential}) for the elliptic operator $-\nabla \cdot \sigma \nabla: W^{1,p}_0(\Omega) \to  W^{1,p'}_0(\Omega)^*$ with $p\in (2,\overline p)$ as in  Lemma \ref{lemma: Meyer-Jerison-Kenig}. Furthermore, we set
			\begin{equation}\label{H}
				H\in C(\overline \Omega) \cap W^{1,p}_0(\Omega), \quad  H(x):=\int_{\Omega} G(x,y) c_f \, dy \quad \forall x\in  \Omega,
			\end{equation}
			where the regularity $H\in C(\overline \Omega)$ is obtained due to the construction that it is the unique weak solution of 
			$$
			-\nabla \cdot (\sigma \nabla H) = c_f\quad\text{ in } \Omega \quad\textup{and} \quad
			H=0 \quad\text{ on } \partial \Omega.
			$$
			Moreover, one can ensure $H\in C^2(\Omega)$ using the Schauder estimate introduced in \cite[Chapter 6]{GT2001_elliptic}.
			
			Let us show that 
			\begin{equation}\label{claim1}
				\exists C_f \in (0,\infty) \quad \forall x\in  \Omega  : \quad  	C^{-1}_f\rho(x)^2  \leq  H(x)\leq C_f.  
			\end{equation}
			To this end, on the one hand, the right-hand side estimate in \eqref{claim1} obviously holds for any constant being larger than or equal to $\|H\|_{C(\overline \Omega)}$.  On the other hand, since $n\geq 2$,     \cite[Theorem 2.2]{jin2022convergence} implies the existence of a constant $C_{\Omega}>0$ such that 
			\begin{align}
				G (x,y)\geq  \frac{C_\Omega^{-1}}{|x-y|^{n-2}}   \quad \forall  x,y\in  \Omega  \quad  \text{with}\quad  
				|x-y|\leq \frac{1}{2}\rho(x). 
			\end{align}
			The above estimate, along with the nonnegativity of the Green function $G$, implies finally 
			\begin{equation}
				H(x) \geq \frac{c_f}{C_\Omega} \int_{B(x,\rho_x)} |x-y|^{2-n}\,  dy= \frac{c_f}{8C_\Omega}  \rho(x)^2 |\mathbb S_{n-1}|  \quad \forall x \in \Omega,
			\end{equation}
			where $B(x,\frac{1}{2}\rho_x)$  denotes the open ball with radius $\frac{1}{2}\rho_x$ centered at $x$, and $|\mathbb S_{n-1}|$ is the Lebesgue measure of $(n-1)$-dimensional unit sphere. This proves the claim \eqref{claim1} with $
			C_f := \max\big(\|H\|_{C(\overline \Omega)},\,  \frac{c_f}{8C_\Omega} |\mathbb S_{n-1}| \big)$.\\

			\noindent (Step 3) The application of Lemma \ref{lemma:low:est} shows that 
			\begin{equation}\label{uq1}
				\begin{split}
					\overline{u}(x)\geq H(x) \bigg(1+(m-1)\frac{1}{H(x)}\int_{\Omega} G(x,y) H^m(y)\overline q\,  dy \bigg)^{-\frac{1}{m-1}},  \quad\forall x\in \Omega.
				\end{split}
			\end{equation}
			By the definition of the Green function, the function $w:=\int_{\Omega} G (\cdot,y) H^m(y)\overline q\, dy$
			solves the linear elliptic equation 
			$$
				-\nabla \cdot ( \sigma \nabla w) = H^m \overline q \quad \text{ in } \quad\Omega \quad \textup{and}\quad 
				w =0 \quad \text{ on }\quad \partial\Omega. 
			$$ 
			Thus, since $ H^m \overline q \in C(\overline\Omega)$, $\Omega$ is convex, and $\sigma \in C^{\infty}(\Omega)^{n\times n}$, it follows that $w\in W^{1,\infty}(\Omega)$, and then the fact that $w$ vanishes on $\p\Omega$ further implies  
			\begin{equation}
				\begin{split}
					w(x)\leq \norm{\nabla w}_{L^\infty(\Omega)} \rho(x)  \leq C_{p,r}\left\|H^m \overline q \right\|_{L^r(\Omega)}\rho(x) , \quad \text{for all }x\in \Omega,
				\end{split}
			\end{equation}
			where $C_{p,r}>0$ is the constant given in the estimate \eqref{est: well 1}.  
			Combing this with \eqref{claim1}, we have that 
			\begin{equation}
				\frac{1}{H(x)}\int_{\Omega} G (x,y) H^m(y)\overline q  \, dy\leq \frac{C_fC_{p,r}\left\|H^m \overline q \right\|_{L^r(\Omega)}}{\rho(x)} \quad \forall x\in \Omega, 
			\end{equation}
			which, together with \eqref{uq1}, shows that 
			\begin{equation}
				\begin{split}
					\overline{u}(x)\geq c_0\rho(x)^{2+\frac{1}{m-1}}\left(c_0\rho(x)+(m-1)c_1)\right)^{-\frac{1}{m-1}}\geq c^\dag \rho(x)^{2+\frac{1}{m-1}},
				\end{split}
			\end{equation}
			where $c_1= C_{p,r}\left\|H^m \overline q \right\|_{L^r(\Omega)}$. 
			Here, the constant $c^\dag>0$ depends on $c_0$, but is independent of the value $m$.
			This completes the proof, since 
			$\rho(x)\leq \mathrm{diam}(\Omega)< \infty$ is uniformly bounded.
		\end{proof}

		\section{Finite element approximations}\label{sec: finite element}
		This section analyzes the numerical reconstruction of $q$ based on the piecewise linear FEM as readily proposed in the introduction. Let us begin by introducing the standing assumption for our numerical analysis:

		\begin{assumption} \label{Assump: FEM} Let Assumption \ref{Assump: regularity} be satisfied with $\Omega$ being either a bounded convex polygonal domain in $\R^2$, or a  bounded convex polyhedral domain in  $\R^3$.  let additionally   $\sigma\in W^{1,\infty}(\Omega)^{n\times n}$ and $r>n$.  Furthermore, the true coefficient $q^\dag$  is assumed to satisfy  
        \begin{equation}\label{eq:assump:qdag}
            		q^\dag  \in H^{1}(\Omega) \quad \textrm{and} \quad  {\underline  q \leq q^\dag(x) \leq \overline q} \quad \textrm{ for  a.e. } x \in \Omega. 
        \end{equation}
 
		\end{assumption}
        \begin{remark} \label{H1 ass only}
             The proposed regularity condition for $q^\dag$ for our error analysis is significantly weaker than the $H^2(\Omega)$-regularity assumption considered in  \cite{jin2022convergence} for the linear case $m=1$. This relaxation is achieved by employing the Carstensen quasi-interpolation operator \cite{carstensen1999quasi} in combination with the $L^2$ projection operator and corresponding error estimates in $H^{-1}(\Omega)$. 
        \end{remark}
		From Lemma \ref{thm: well-posed N}, it follows that     Assumption \ref{Assump: FEM} implies that $u(q) \in H^1_0(\Omega) \cap H^2(\Omega) \cap W^{1,\infty}(\Omega)$ holds for all $q \in \mathcal A$.
		Supposing that Assumption \ref{Assump: FEM} holds,  in all what follows, let $\{\mathcal{T}_h\}_{h>0}$  be  a quasi-uniform family of simplicial triangulations of $\Omega$    and
		\begin{equation}\label{FEM}
			S_h:=\left\{u_h\in C(\overline{\Omega})   : \,  \left. u_h\right|_T \,\textup{is a polynomial of degree 1 on}\, T \textrm{ for all } T\in \mathcal{T}_h\right\},\quad
			S_h^0:= S_h \cap H^1_0(\Omega).
		\end{equation}
		In view of the quasi uniformity of $\left\{\mathcal{T}_h\right\}_{h>0}$,   the following inequalities  (see \cite[Corollary 1.141]{ern2004theory}) are obtained:
		\begin{equation}\label{inverse:property}
			\|v_h\|_{H^{l}(\Omega)}\leq C h^{-(l-k)}\|v_h\|_{H^{k}(\Omega)}\quad\forall 
			v_h\in S_h \quad {\forall 0\leq k\leq l \le 1.} 
		\end{equation}
		We consider the finite element approximation of \eqref{eq: main}  as follows:
		Given $q\in \mathcal{A}$, let $u_h(q) \in S_h^0$ denote  the unique solution to
		\begin{equation}\label{numerical:op}
			a( u_h(q),  v)+ \big(q u_h(q)^m, v\big)_{L^2(\Omega)}=(f,v)_{L^2(\Omega)} \quad \forall v\in S_{h}^0 
		\end{equation}
		with  $a: H^1_0(\Omega)\times H^1_0(\Omega) \to \R$, $a(u,v):=\int_\Omega \sigma \nabla u\cdot \nabla v \, dx$.
		The well-posedness of the discrete variational problem \eqref{numerical:op} follows standard arguments (cf. \cite[p. 3]{as2002uniform}).  

On the basis of \eqref{numerical:op}, we  numerically reconstruct  $q^\dag$, defined by  a minimizer $\qda \in \mathcal A_h:= \mathcal A \cap S_h$ (see \eqref{def:adimisble}) of the following least squares minimization problem:
\begin{equation}\label{tik dis}
			 \min_{q_h\in \mathcal{A}_h}\mathcal{J}_{\alpha,h}^\delta(q_h) :=\frac{1}{2}\big\|u_h(q_h)-y^\delta\big\|^2_{L^2(\Omega)}+\frac{\alpha}{2}(\|q_h\|_{L^2(\Omega)}^2+\|\nabla q_h\|^2_{L^2(\Omega)})
		\end{equation}
		with noise measurement  $y^\delta\in L^2(\Omega)$ satisfying 
		\begin{equation}\label{noisy} 
			\big\|u(q^\dag)-y^\delta\big\|_{L^2(\Omega)}\leq \delta. 
		\end{equation}
      Note that the existence of a minimizer $q^\delta_{q,h}$   for every fixed  $h,\delta, \alpha$ follows from the Weierstrass lemma, as the feasible set $\mathcal A_h$ is compact and $u_h:\mathcal{A}_h\subset S_h\to S_h$ is continuous (see Remark \ref{remark:conti:uh}).

        \subsection{Error estimates} Our goal is to establish error estimates between the finite element approximation $\qda$   and the true solution $q^\dag$. Let us start by introducing the operators $A: H^1_0(\Omega)\to H^{-1}(\Omega)$ and  $A_h: S_h^0\to S_h^0$ associated with the bilinear form $a$, respectively, for the continuous and discrete setting by
		\begin{align}
			\langle Au,v \rangle_{H^{-1}(\Omega),H^1_0(\Omega)}:=a(u,v) \quad  \forall\, u,v\in H_0^1(\Omega),\quad 
			(A_h w_h,v_h)_{L^2(\Omega)}:=a(w_h,v_h) \quad\forall\, w_h,v_h\in S^0_h.
		\end{align}
		By
		$P_h: L^2(\Omega)\to S_h$ and $R_h: W_0^{1,1}(\Omega) \to S^0_h$, we denote, respectively, the standard $L^2$ projection onto $S_h$ and  the Ritz projection associated with the elliptic operator $A$, i.e., they satisfy 
		$$ 
		(P_h u, v_h)_{L^2(\Omega)}=(u,v_h)_{L^2(\Omega)} \quad \forall\, u\in L^2(\Omega), v_h\in S_h, \quad
		a(R_h w,v_h)= a(w,v_h)     \quad\forall\, w\in H_0^1(\Omega) \, v_h\in S^0_h.
		$$
		For every $k=0,1$, $p\in (1,\infty]$, $m=1,2$, it is well-known that there is a constant $C>0$, depending only on $\Omega$, $\sigma$ and shape regularity constants,  such that  (see\cite[Proposition 1.134]{ern2004theory} and \cite[Theorem 2.2]{thomee2007galerkin}):
		\begin{align}
			&\|u-P_h u\|_{W^{k,p}(\Omega)}\leq Ch^{m-k}\|u\|_{W^{m,p}(\Omega)} \quad  \forall u\in  W^{m,p}(\Omega) \quad \forall h>0  \label{L2projection}\\
			& \|u-R_h u\|_{H^{k}(\Omega)}\leq Ch^{m-k}\|u\|_{H^{m}(\Omega)} \quad  \forall u\in  H^{m}(\Omega) \quad \forall h>0 \label{approx:Ritz} 
			\\
			&  \|P_h u\|_{L^{p}(\Omega)}\leq C\|u\|_{L^p(\Omega)}, \quad \|R_h u\|_{W^{1,p}(\Omega)}\leq C\|u\|_{W^{1,p}(\Omega)} \quad \forall u\in W^{1,p}(\Omega) \quad \forall h>0. \label{stability:Ritz and L2} 
   		\end{align}
     Our error analysis also relies on the use of the Carstensen quasi-interpolation operator \cite{carstensen1999quasi}, defined by 
       \begin{equation}\label{def:quasi-interp}
       \mathcal I_h:L^1(\Omega)\to S_h, \quad  \mathcal I_h(u):= \sum_{i=1}^{{N}_\mathcal{T}} \pi_i(u)\phi_i \quad 
        \pi_i(u):=\frac{\int_{\omega_i}\phi_i u \,dx}{\int_{\omega_i} \phi_i \,dx}\,\,\textup{with}
        \,\,\omega_i:=\textup{supp}\,\phi_i,
       \end{equation}
       where $\{\phi_i\}_{i=1}^{{N}_\mathcal{T}}\subset S_h$ denotes the canonical nodal basis of $S_h$. It is well-known (see \cite[Theorem 3.1]{carstensen1999quasi}) that 
        $\mathcal I_h:L^1(\Omega)\to S_h$ enjoys the   $H^1$-stability: There exists   a constant  $C>0$ independent of $h>0$ such that 
       \begin{equation}\label{Pih:grad:stability}
        \|\nabla \mathcal I_h u\|_{L^2(\Omega)}  \leq C\|\nabla u\|_{L^2(\Omega)}   \quad\forall u\in H^1(\Omega).
       \end{equation}
     By definition, it also satisfies 
       \begin{equation}\label{preserve:A}
        \underline q\leq   q(x)\leq \overline{q}  \textrm{ for  a.e. } x \in \Omega    \implies    \underline q\leq     \mathcal I_h(q)(x)  \leq \overline{q} \textrm{ for  a.e. } x \in \Omega. 
       \end{equation}
      Furthermore, according to \cite[Lemma 4.4]{los2008finite} along with  
      \begin{equation}
       \|u\|_{H^{-1}(\Omega)}= \sup_{\varphi\in H_0^1(\Omega)\setminus\{0\}} \frac{(u,\varphi)_{L^2(\Omega)}}{\|\varphi\|_{H^1(\Omega)}} \leq 
       \sup_{\varphi\in H^1(\Omega)\setminus\{0\}}\frac{(u,\varphi)_{L^2(\Omega)}}{\|\varphi\|_{H^1(\Omega)}} = \|u\|_{H^1(\Omega)^*} \quad \forall u  \in L^2(\Omega),
      \end{equation}
      there exists a constant $C>0$, independent of $h$, such that  
       \begin{equation}\label{lemma:negative-norm}
        \|\mathcal I_h(u)-u\|_{H^{-1}(\Omega)}\leq C h^2 \|u\|_{H^1(\Omega)}\quad\forall u\in H^1(\Omega).
       \end{equation} 
		Moreover, a priori error estimation for  \eqref{numerical:op} is obtained as follows:
        \begin{lemma}[{\cite[Theorem 2]{as2002uniform}}]\label{thm:fem:app}
			Let Assumption \ref{Assump: FEM} be satisfied.  
            Then,  there exist constants  $h_0,C>0$ such that  
				\begin{align}
					&\|u(q)-u_h(q)\|_{L^2(\Omega)}+  h\|u(q)-u_h(q)\|_{H^1(\Omega)}\leq C h^2 \quad \forall h\in (0,h_0) \quad\forall q\in \mathcal{A} \label{opM:error:H1} \\
                    &C_{\mathcal{A}}:=\sup_{h\in (0,h_0)} \sup_{q\in \mathcal{A}}\big(\|u_h(q)\|_{H^1(\Omega)}+\|u_h(q)\|_{C(\overline{\Omega})}\big)<\infty. \label{opM:error:H1:2}
                    \end{align}
		\end{lemma}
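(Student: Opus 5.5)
The statement to prove is Lemma \ref{thm:fem:app}: the discrete semilinear problem \eqref{numerical:op} converges at order $h^2$ in $L^2(\Omega)$ and order $h$ in $H^1(\Omega)$, with constants uniform over $q\in\mathcal A$. The plan is to establish the uniform bounds \eqref{opM:error:H1:2} first and then carry out a Ritz-projection error splitting combined with an Aubin--Nitsche duality argument for the linearized equation.

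\textbf{Uniform bounds.} Testing \eqref{numerical:op} with $v=u_h(q)$ and using $q u_h^{m+1}\ge 0$ (since $m$ is odd and $q\ge0$) together with ellipticity \eqref{ellipticity} gives $\|u_h(q)\|_{H^1(\Omega)}\le C\|f\|_{L^2(\Omega)}$. This bound is uniform in $q$ and $h$.

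The $L^\infty$ bound is the main obstacle, because no discrete maximum principle is available on general meshes. I would take the following route. The continuous solution $u(q)$ is bounded in $H^2(\Omega)\cap W^{1,\infty}(\Omega)$ uniformly over $q\in\mathcal A$ by Lemma \ref{thm: well-posed N}. Next, show $\|u(q)-u_h(q)\|_{H^1(\Omega)}\le Ch$ (argued below). Then use the inverse estimate
\[
\|R_hu-u_h\|_{L^\infty(\Omega)}\le Ch^{-n/2}\|R_hu-u_h\|_{L^2(\Omega)}\le Ch^{2-n/2}.
\]
Since $n\le3$, this gives $\|u_h\|_{L^\infty(\Omega)}\le \|R_hu\|_{L^\infty(\Omega)}+C$, and $R_hu$ is bounded in $L^\infty$ by \eqref{stability:Ritz and L2} and Sobolev embedding. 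This creates a circularity, because the $L^2$ estimate below uses an $L^\infty$ bound on $u_h$. I would break it either by a bootstrap/continuity argument for $h<h_0$, or by invoking directly the uniform $L^\infty$-stability of \cite{as2002uniform}.

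\textbf{$H^1$ and $L^2$ error estimates.} Split $u-u_h=(u-R_hu)+(R_hu-u_h)=:\eta+\xi_h$. The estimate \eqref{approx:Ritz} gives $\|\eta\|_{H^1(\Omega)}\le Ch$ and $\|\eta\|_{L^2(\Omega)}\le Ch^2$, uniformly in $q$. Subtracting the discrete equation from the continuous one gives, for all $v_h\in S_h^0$,
\[
a(\xi_h,v_h)+(q(u^m-u_h^m),v_h)_{L^2(\Omega)}=0 .
\]
By \eqref{difference binomial}, write $u^m-u_h^m=Q\,(u-u_h)$ with $Q=\sum_k u^ku_h^{m-1-k}\ge0$ by \eqref{difference binomial ineq}. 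Testing with $v_h=\xi_h$ and moving the $\eta$-term to the right-hand side yields
\[
\underline\lambda\|\nabla\xi_h\|_{L^2(\Omega)}^2\le \|qQ\|_{L^\infty(\Omega)}\,\|\eta\|_{L^2(\Omega)}\,\|\xi_h\|_{L^2(\Omega)} .
\]
Here $\|qQ\|_{L^\infty(\Omega)}$ is bounded by $m\overline q$ times powers of the $L^\infty$ bounds on $u$ and $u_h$. With Poincaré's inequality this gives $\|\xi_h\|_{H^1(\Omega)}\le Ch^2$, so in fact both parts of \eqref{opM:error:H1} follow: the $H^1$ error is $O(h)$ because of $\eta$, and the $L^2$ error is $O(h^2)$. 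If the $L^\infty$ bound on $u_h$ is unavailable at this stage, a crude $H^1$ bound suffices first: monotonicity gives $a(u-u_h,u-u_h)\le a(u-u_h,u-v_h)+\big(q(u^m-u_h^m),u-v_h\big)$, and bounding the nonlinear term by $H^1$ norms (using $n\le3$, $m$ fixed) and choosing $v_h=R_hu$ gives $O(h)$. That feeds the bootstrap for $L^\infty$ described above.

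If any step turns out not to close, an alternative for the $L^2$ estimate is a standard duality argument. Take $z\in H^2(\Omega)\cap H^1_0(\Omega)$ solving $Az+qQz=u-u_h$, which is well posed because $qQ\ge0$ is bounded and $\Omega$ is convex. Galerkin orthogonality of the full nonlinear error then gives $\|u-u_h\|_{L^2(\Omega)}^2\le C\|u-u_h\|_{H^1(\Omega)}\,h\,\|z\|_{H^2(\Omega)}$. All constants depend only on $\overline q$, $f$, $\sigma$, $\Omega$ and $m$, which gives uniformity in $q\in\mathcal A$.
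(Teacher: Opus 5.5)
The paper does not prove this lemma; it imports it from \cite[Theorem 2]{as2002uniform}. Your conditional argument is correct. Once $\sup_{h,q}\|u_h(q)\|_{C(\overline\Omega)}<\infty$ is known, the splitting $u-u_h=(u-R_hu)+(R_hu-u_h)$ together with $qQ\ge 0$ gives $\|R_hu-u_h\|_{H^1(\Omega)}\le Ch^2$, and hence \eqref{opM:error:H1}.

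The gap is that this uniform $L^\infty$ bound, which is the real content of the lemma, is never established. None of your routes to it closes:
\begin{itemize}
\item The inverse-estimate route is circular, as you note yourself.
\item The duality alternative needs $\|qQ\|_{L^\infty(\Omega)}$, i.e.\ the same bound, to get $\|z\|_{H^2(\Omega)}\le C\|u-u_h\|_{L^2(\Omega)}$.
\item The crude monotonicity estimate does not work as stated. For $n=3$ the $H^1$ bound only controls $|u_h|^{m-1}$ in $L^{6/(m-1)}(\Omega)$. So for $m\ge 7$, H\"older with $H^1(\Omega)\hookrightarrow L^6(\Omega)$ no longer bounds $(q(u^m-u_h^m),u-R_hu)_{L^2(\Omega)}$ by $C\|u-u_h\|_{H^1(\Omega)}\|u-R_hu\|_{H^1(\Omega)}$.
\item The \emph{bootstrap/continuity argument} is not specified.
\item Deferring to \cite{as2002uniform} just cites part of the statement itself, which is what the paper does for the whole lemma.
\end{itemize}

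The missing idea is to truncate the nonlinearity. This removes the circularity and also explains where $h_0$ comes from.
\begin{enumerate}
\item Let $M:=1+\sup_{q\in\mathcal A}\|u(q)\|_{C(\overline\Omega)}$, which is finite by \eqref{est: well 0}. Set $\phi_M(s):=(\max\{-M,\min\{s,M\}\})^m$. This function is nondecreasing and globally Lipschitz with constant $mM^{m-1}$.
\item Let $\tilde u_h\in S_h^0$ solve \eqref{numerical:op} with $u_h^m$ replaced by $\phi_M(\tilde u_h)$. This problem is uniquely solvable by strict monotonicity.
\item Since $|u(q)|<M$, $u(q)$ also solves the truncated continuous problem. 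Your splitting argument then applies verbatim with $\tilde Q:=(\phi_M(u)-\phi_M(\tilde u_h))/(u-\tilde u_h)\in[0,mM^{m-1}]$, and this needs no a priori information on $\tilde u_h$. It gives $\|R_hu-\tilde u_h\|_{H^1(\Omega)}\le Ch^2$ uniformly in $q\in\mathcal A$.
\item The inverse inequality $\|v_h\|_{L^\infty(\Omega)}\le Ch^{-n/2}\|v_h\|_{L^2(\Omega)}$ yields $\|R_hu-\tilde u_h\|_{L^\infty(\Omega)}\le Ch^{2-n/2}$. Comparing $R_hu$ with the nodal interpolant gives $\|u-R_hu\|_{L^\infty(\Omega)}\le Ch^{2-n/2}\sup_{q\in\mathcal A}\|u(q)\|_{H^2(\Omega)}$. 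Hence $\|u-\tilde u_h\|_{L^\infty(\Omega)}\le Ch^{1/2}$ for $n\le 3$.
\item Choose $h_0$ with $Ch_0^{1/2}\le 1$. Then $|\tilde u_h|\le M$, so $\phi_M(\tilde u_h)=\tilde u_h^m$. By uniqueness for \eqref{numerical:op}, $\tilde u_h=u_h(q)$.
\end{enumerate}
This gives \eqref{opM:error:H1} and the $C(\overline\Omega)$ part of \eqref{opM:error:H1:2} at once. The $H^1$ part you already have from testing with $u_h$.
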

        
		\begin{lemma}\label{lemma:FEM:app2}
			Let Assumption \ref{Assump: FEM} hold, and $h_0>0$ be as in Lemma \ref{thm:fem:app}.  Then, there is a constant $C>0$, independent of $h$,  such that 
			\begin{equation}\label{l2:error2}
					\big\|u(q^\dag)-u_h(\mathcal I_h q^\dag)\big\|_{L^2(\Omega)}+ h \big\|\nabla (u(q^\dag)-u_h(\mathcal I_h q^\dag))\big\|_{L^{2}(\Omega)}\leq C h^2  \quad\forall h\in (0,h_0).
			\end{equation}

		\end{lemma}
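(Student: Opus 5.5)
We split the error by the triangle inequality,
\[
u(q^\dag)-u_h(\mathcal I_h q^\dag) = \big(u(q^\dag)-u(\mathcal I_h q^\dag)\big) + \big(u(\mathcal I_h q^\dag)-u_h(\mathcal I_h q^\dag)\big).
\]
By \eqref{preserve:A} and \eqref{eq:assump:qdag}, $\mathcal I_h q^\dag\in\mathcal A$. Lemma \ref{thm:fem:app}, applied with $q=\mathcal I_h q^\dag$, then bounds the second term by $Ch^2$ in $L^2(\Omega)$ and by $Ch$ in $H^1(\Omega)$, uniformly in $h\in(0,h_0)$. It therefore suffices to prove
\[
\|u(q^\dag)-u(\mathcal I_h q^\dag)\|_{H^1(\Omega)}\le Ch^2,
\]
since this bound controls both the $L^2$ part and $h$ times the gradient part.

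Write $e:=u(\mathcal I_h q^\dag)-u(q^\dag)\in H^1_0(\Omega)$. As in \eqref{weak id2 in sec 4}, using \eqref{difference binomial}, $e$ solves the linear problem
\[
-\nabla\cdot(\sigma\nabla e)+ \mathcal I_h q^\dag\, \Big(\sum_{k=0}^{m-1}u(\mathcal I_h q^\dag)^k u(q^\dag)^{m-1-k}\Big)e = (q^\dag-\mathcal I_h q^\dag)\,u(q^\dag)^m
\]
in $\Omega$, with $e=0$ on $\partial\Omega$. The zeroth-order coefficient is nonnegative by \eqref{difference binomial ineq} and bounded by $m\overline q C_{\mathcal A}^{m-1}$. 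We test with $e$ and use ellipticity \eqref{ellipticity} together with the Poincaré inequality. This gives
\[
\underline\lambda\|\nabla e\|_{L^2}^2\le \big\langle (q^\dag-\mathcal I_h q^\dag)\,u(q^\dag)^m, e\big\rangle ,
\]
so $\|e\|_{H^1}\le C\|(q^\dag-\mathcal I_h q^\dag)u(q^\dag)^m\|_{H^{-1}(\Omega)}$.

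The key step is estimating the right-hand side in $H^{-1}$. For $\varphi\in H^1_0(\Omega)$ we write
\[
\big((q^\dag-\mathcal I_h q^\dag)u(q^\dag)^m,\varphi\big)=\big(q^\dag-\mathcal I_h q^\dag,\ u(q^\dag)^m\varphi\big).
\]
Under Assumption \ref{Assump: FEM} we have $u^\dag:=u(q^\dag)\in W^{1,\infty}(\Omega)\cap H^1_0(\Omega)$, so $u(q^\dag)^m\varphi\in H^1_0(\Omega)$ with
\[
\|u(q^\dag)^m\varphi\|_{H^1}\le C\big(\|u^\dag\|_{C(\overline\Omega)}^m+m\|u^\dag\|_{C(\overline\Omega)}^{m-1}\|\nabla u^\dag\|_{L^\infty}\big)\|\varphi\|_{H^1}.
\]
Taking the supremum over $\varphi$ and applying \eqref{lemma:negative-norm} yields
\[
\|(q^\dag-\mathcal I_h q^\dag)u(q^\dag)^m\|_{H^{-1}}\le C\|q^\dag-\mathcal I_h q^\dag\|_{H^{-1}}\le Ch^2\|q^\dag\|_{H^1(\Omega)}.
\]
Combining the three displays gives $\|e\|_{H^1}\le Ch^2$ and completes the argument.

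The main obstacle is this multiplier step. The duality in \eqref{lemma:negative-norm} must accommodate test functions of the form $u(q^\dag)^m\varphi$, which needs the $W^{1,\infty}$ regularity from Lemma \ref{thm: well-posed N}. That lemma requires convexity, $r>n$ and $\sigma\in W^{1,\infty}(\Omega)^{n\times n}$, all of which Assumption \ref{Assump: FEM} provides. Once this holds, all constants are independent of $h$.
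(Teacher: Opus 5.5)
Your proof is correct, but it splits the error through a different intermediate function than the paper does.

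The paper writes $u(q^\dag)-u_h(\mathcal I_h q^\dag)$ as $(u(q^\dag)-u_h(q^\dag))+(u_h(q^\dag)-u_h(\mathcal I_h q^\dag))$. The first term is handled by Lemma \ref{thm:fem:app} at the fixed coefficient $q^\dag$. The second is handled by a \emph{discrete} stability estimate tested in $S_h^0$, whose right-hand side $(\mathcal I_h q^\dag-q^\dag)u_h(q^\dag)^m$ is bounded in $H^{-1}(\Omega)$ by the same multiplier argument you use together with \eqref{lemma:negative-norm}. Because the multiplier there is the discrete function $u_h(q^\dag)^m$, the paper must first prove the uniform bound \eqref{estimate:W1infty} on $\|u_h(q^\dag)\|_{W^{1,\infty}(\Omega)}$. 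It does so via $u_h(q^\dag)=R_hA^{-1}(f-q^\dag u_h(q^\dag)^m)$ and the $W^{1,\infty}$-stability of the Ritz projection in \eqref{stability:Ritz and L2}, which is a nontrivial result on convex polyhedra.

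You instead pass through the continuous solution $u(\mathcal I_h q^\dag)$. This moves the stability estimate to the continuous level, so the multiplier is $u(q^\dag)^m$, whose $W^{1,\infty}$ bound comes directly from Lemma \ref{thm: well-posed N}. The Ritz-projection step disappears entirely. The price is that you apply \eqref{opM:error:H1} with the $h$-dependent coefficient $\mathcal I_h q^\dag$, so you genuinely need that estimate to be uniform over $q\in\mathcal A$. Lemma \ref{thm:fem:app} provides this, and you correctly secure $\mathcal I_h q^\dag\in\mathcal A$ via \eqref{preserve:A}.

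The paper's route invokes \eqref{opM:error:H1} and \eqref{opM:error:H1:2} only at the single coefficient $q^\dag$. Its discrete stability argument is also the one it reuses in Remark \ref{remark:conti:uh}.
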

		\begin{proof}
         By \eqref{opM:error:H1}, it suffices to show that there is a constant $C>0$ such that
        	\begin{equation}\label{l2:error2:0}
					\big\|u_h(q^\dag)-u_h(\mathcal I_h q^\dag)\big\|_{L^2(\Omega)}+  h\big\|\nabla (u_h(q^\dag)-u_h(\mathcal I_h q^\dag))\big\|_{L^{2}(\Omega)}\leq C h^2 \quad \forall h\in (0,h_0).
			\end{equation}
         Thanks to Assumption \ref{Assump: FEM} and \eqref{preserve:A}, it holds that  $\mathcal I_h q^\dag\in \mathcal{A}$ for all $h>0$. 
         Let us next prove that
          \begin{equation}\label{estimate:W1infty}
           C_{\infty}:= \sup_{h\in (0,h_0)}\|u_h(q^\dag) \|_{W^{1,\infty}(\Omega)}<\infty.  
          \end{equation}
          Using the $L^2$ projection operator $P_h$ and the Ritz projection operator $R_h$,  it follows from \eqref{numerical:op} that
			\begin{align}\label{lemma:lp regular eq2}
				A_h u_h(q^\dag) + P_h q^\dag  u_h(q^\dag) = P_h  f & \quad \ \,  \implies \qquad    u_h(q^\dag) = A_h^{-1} P_h ( f- q^\dag  u_h(q^\dag)^m  ) \\ 
               & \underbrace{\implies}_{A_h^{-1}P_h =  R_h A^{-1}} \    u_h(q^\dag)= R_h A^{-1}   ( f- q^\dag  u_h(q^\dag)^m   ). 
			\end{align}
              On the other hand, we know that  $A^{-1} \in \mathcal{L}(L^r(\Omega), W^{1,\infty}(\Omega))$  due to the convexity assumption on $\Omega$ and the regularity $\sigma\in C^{0,1}(\overline \Omega)^{3\times3}$ (see \eqref{est:grad}). Thus, along with  H\"{o}lder's inequality and \eqref{stability:Ritz and L2},   it follows that 
\begin{align*}
\|u_h(q^\dag)\|_{W^{1,\infty}(\Omega)} &\,\,= \|R_h A^{-1}   (f-q^\dag  u_h(q^\dag)^m )\|_{W^{1,\infty}(\Omega)}  \\
&\!\underbrace{\le}_{\eqref{stability:Ritz and L2}} \| A^{-1}   ( f-  q^\dag  u_h(q^\dag)^m )\|_{W^{1,\infty}(\Omega)}  
\le C \| f- q^\dag  u_h(q^\dag)^m\|_{L^{r}(\Omega)} \\
&\,\,\!\leq C (\| f\|_{L^r(\Omega)} +\| q^\dag\|_{L^\infty(\Omega)}\|u_h(q^\dag)\|^m_{C(\overline \Omega)}) \\
&\!\!\underbrace{\leq }_{\eqref{opM:error:H1:2}} C(\| f\|_{L^r(\Omega)} + \overline{q} C_{\mathcal{A}}^m) \quad \forall h \in (0,h_0).  
\end{align*}
In conclusion, \eqref{estimate:W1infty} is valid. 

With this result at hand, we now derive \eqref{l2:error2:0}. Subtracting the weak formulations  for $u_h(q^\dag)$ and $u_h(\mathcal I_h  q^\dag)$ (cf.  \eqref{numerical:op}) from each other and then using the algebraic identity \eqref{difference binomial}, we deduce with $  W_h:=\sum_{k=0}^{m-1} u_h(\mathcal I_h  q^\dag)^k u_h(q^\dag)^{m-1-k}$ that  
			\begin{align}\label{ineq1}
				\begin{split}
              &\quad \, a(u_h(\mathcal I_h  q^\dag)-u_h(q^\dag),v_{h}) +  (\mathcal I_h  q^\dag  W_h  (u_h(\mathcal I_h  q^\dag)-u_h(q^\dag)), v_{h})_{L^2(\Omega)} \\
            & =  ((q^\dag-\mathcal I_h  q^\dag) u_h(q^\dag)^m,v_{h})_{L^2(\Omega)} \quad\forall v_h\in S_h^0. 
				\end{split}
			\end{align}
			On the other hand, thanks to \eqref{difference binomial ineq} and the non-negativity of $\mathcal I_h q^\dag \in \mathcal A$, we have that   $\mathcal I_h  q^\dag  W_h \geq 0$. Thus, it follows from \eqref{ineq1} and Poincar\'e inequality  that 
        \begin{equation}\label{eh:h1}
          \|u_h(\mathcal I_h  q^\dag)-u_h(q^\dag)\|_{H^1(\Omega)}\leq C \|(\mathcal I_h  q^{\dag} - q^\dag) u_h(q^\dag)^m\|_{H^{-1}(\Omega)} \quad \forall h\in(0, h_0) 
        \end{equation}
        with $C>0$ depending only on $\Omega$. By the definition of $H^{-1}(\Omega)$, it follows that 
        \begin{align*}
        \|(\mathcal I_h  q^{\dag} - q^\dag) u_h(q^\dag)^m\|_{H^{-1}(\Omega)}&:= \sup_{\|\varphi\|_{H_0^1(\Omega)}=1}\{ ((\mathcal I_h  q^\dag - q^\dag) u_h(q^\dag)^m, \varphi)_{L^2(\Omega)}\}  \\
        &\ \leq  \|\mathcal I_h  q^\dag -q^\dag\|_{H^{-1}(\Omega)}  \sup_{\|\varphi\|_{H_0^1(\Omega)}=1}\{\|u_h(q^\dag)^m\varphi\|_{H_0^1(\Omega)}\} \quad \forall h\in(0,h_0). 
        \end{align*}
        Combined with \eqref{lemma:negative-norm} and the chain rule, this finally leads to 
        \begin{equation}\label{proof lem ineq}
          \|(\mathcal I_h q^{\dag} - q^\dag) u_h(q^\dag)^m\|_{H^{-1}(\Omega)}\leq C h^2  \|u(q_h)^m\|_{W^{1,\infty}(\Omega)}
          \leq C C_\infty^m h^2 \quad \forall h  \in (0,h_0). 
        \end{equation}
         Applying \eqref{eh:h1} to \eqref{proof lem ineq}     implies finally \eqref{l2:error2:0}. This completes the proof.            
		\end{proof}

	    \begin{remark}\label{remark:conti:uh}
       By the same reasoning leading to \eqref{eh:h1}, we obtain   for every $h>0$   and  $q_h,q'_h\in \mathcal{A}_h$ 
       \begin{equation}
          \|u_h(q_h)-u_h(q'_h)\|_{H^1(\Omega)}\leq C \|(q_h - q'_h) u_h(q_h)^m\|_{H^{-1}(\Omega)}
          \leq  C \|q_h - q'_h\|_{L^2(\Omega)}  \|u_h(q_h)\|^m_{L^\infty(\Omega)}
       \end{equation}
       with a constant $C>0$   independent of $h>0$.   For every fixed $h>0$, this estimate implies in particular the continuity of the mapping $u_h:\mathcal{A}_h\subset S_h\to S_h$.

	    \end{remark}

		\begin{theorem} \label{final theo}
			Let Assumptions  \ref{Assump: cse} and \ref{Assump: FEM} be satisfied, and $h_0>0$  be as in Lemma \ref{thm:fem:app}. 
			Then, there is a constant $C>0$, independent of $h,\delta,$ and $\alpha$, such that 
			\begin{align}\label{discrete u:1p}
               \qquad  \| u^\dag- u_h(\qda)\|_{L^2(\Omega)}&\leq C\overbrace{(h^2+\delta+\sqrt{\alpha})}^{=:\eta}
            \\
            \label{discrete:1p}
				\|q^\dag -\qda\|_{L^2(\Omega)}&\leq C ( \eta+h+\min\left\{ h+h^{-1}(\delta + \sqrt{\alpha}) ,1\right\})^{\frac{1}{1+(m+1)\gamma}} (1+\alpha^{-\frac{1}{2}} \eta)^\frac{(m+1)\gamma}{1+(m+1)\gamma} 
			\end{align}  
			hold  for every  $h \in (0,h_0)$, $\alpha, \delta>0$, and every minimizer $\qda$ of \eqref{tik dis}.

		\end{theorem}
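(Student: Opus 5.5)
The plan is the standard Tikhonov-type comparison argument, followed by an application of the conditional stability from Theorem \ref{thm: cse} with $p=2$.

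\emph{Step 1: compare with the interpolant.} We compare the minimizer $\qda$ with the admissible competitor $\mathcal I_h q^\dag\in\mathcal A_h$. By \eqref{preserve:A} this competitor is indeed in $\mathcal A_h$. Minimality gives
\[
\mathcal J^\delta_{\alpha,h}(\qda)\le \mathcal J^\delta_{\alpha,h}(\mathcal I_h q^\dag).
\]
We bound the right-hand side using Lemma \ref{lemma:FEM:app2}, the noise bound \eqref{noisy}, and the $H^1$-stability \eqref{Pih:grad:stability} together with $L^2$-stability of $\mathcal I_h$:
\[
\|u_h(\mathcal I_h q^\dag)-y^\delta\|_{L^2}\le C(h^2+\delta), \qquad \|\mathcal I_h q^\dag\|_{H^1}\le C\|q^\dag\|_{H^1}.
\]
Hence $\mathcal J^\delta_{\alpha,h}(\qda)\le C(h^2+\delta)^2+C\alpha\le C\eta^2$. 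This yields two consequences:
\[
\|u_h(\qda)-y^\delta\|_{L^2}\le C\eta, \qquad \|\qda\|_{H^1}\le C\alpha^{-1/2}\eta .
\]
Using the triangle inequality with \eqref{noisy} gives \eqref{discrete u:1p}. It also gives $\|q^\dag-\qda\|_{H^1}\le C(1+\alpha^{-1/2}\eta)$.

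\emph{Step 2: estimate the continuous state error in $H^1$.} By \eqref{opM:error:H1},
\[
\|u(\qda)-u^\dag\|_{L^2}\le \|u(\qda)-u_h(\qda)\|_{L^2}+\|u_h(\qda)-u^\dag\|_{L^2}\le C\eta.
\]
We need an $H^1$ bound on $e:=u(\qda)-u^\dag$ to feed into the stability estimate. I would split
\[
e=(u(\qda)-u_h(\qda))+(u_h(\qda)-u_h(\mathcal I_hq^\dag))+(u_h(\mathcal I_hq^\dag)-u^\dag).
\]
The first and third terms are $O(h)$ in $H^1$ by \eqref{opM:error:H1} and Lemma \ref{lemma:FEM:app2}. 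The middle term is a discrete function, so the inverse inequality \eqref{inverse:property} bounds it by $Ch^{-1}$ times its $L^2$ norm. That $L^2$ norm is $\le C(h^2+\delta+\sqrt\alpha)$ by Step 1 and Lemma \ref{lemma:FEM:app2}. This gives $\|e\|_{H^1}\le C(h+h^{-1}(\delta+\sqrt\alpha)+h)$. Independently, $\|e\|_{H^1}\le C$ by \eqref{est: well 0}, which accounts for the minimum with $1$. Altogether,
\[
\|e\|_{H^1}\le C\bigl(\eta+h+\min\{h+h^{-1}(\delta+\sqrt\alpha),1\}\bigr).
\]

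\emph{Step 3: apply conditional stability.} The core of the proof of Theorem \ref{thm: cse} is \eqref{cse:Gamma} with $p=2$, i.e. $\|q-q^\dag\|_{\mathcal X^2_{u^\dag}(\Omega)^*}\le \hat C\|e\|_{H^1}$ for all $q\in\mathcal A$. We then use the interpolation inequality \eqref{Gamma:interp2}. Under Assumption \ref{Assump: FEM}, Lemma \ref{thm: well-posed N} gives $\overline p=\infty$, so $\kappa=(m+1)\gamma$, matching the claimed exponents. This yields
\[
\|q^\dag-\qda\|_{L^2}\le C\|e\|_{H^1}^{\frac1{1+\kappa}}\|q^\dag-\qda\|_{H^1}^{\frac{\kappa}{1+\kappa}}.
\]
We cannot use \eqref{eq:cse:Lp} directly, because $\qda$ is not uniformly bounded in $H^1$. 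Inserting the $H^1$ bound from Step 1 and the $e$ bound from Step 2 gives \eqref{discrete:1p}.

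The main obstacle is Step 2: obtaining an $H^1$ estimate of $u(\qda)-u^\dag$ from $L^2$ data. The inverse-estimate trick must be applied only to the genuinely discrete difference $u_h(\qda)-u_h(\mathcal I_hq^\dag)$; the nondiscrete pieces are handled by the a priori FEM bounds.
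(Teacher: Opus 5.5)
Your proposal is correct, but it reaches the key dual-norm bound by a different route than the paper.

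\textbf{The paper's route.} The paper never introduces the continuous state $u(\qda)$. It subtracts the continuous variational equation for $u^\dag$ from the discrete one for $u_h(\qda)$, tests with $P_h v$, and splits the result into three terms:
\begin{itemize}
\item $\mathbf{J}_1$, the gradient of $u_h(\qda)-u^\dag$, handled by the inverse estimate applied to $u_h(\qda)-P_h u^\dag$;
\item $\mathbf{J}_2$, the nonlinear term, bounded by the $L^2$ state error $\eta$;
\item $\mathbf{J}_3$, the projection error $(I-P_h)v$, of order $h$.
\end{itemize}
It then reruns the argument of Theorem \ref{thm: cse} by hand. This is where the additive terms $\eta+h$ in \eqref{discrete:1p} come from.

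\textbf{Your route.} You bound $\|u(\qda)-u^\dag\|_{H^1}$ by the telescoping split, using the uniform-in-$\mathcal A$ estimate \eqref{opM:error:H1} at $q=\qda\in\mathcal A$. You then apply \eqref{cse:Gamma} with $p=2$ as a black box. This is shorter and more modular. It also yields the slightly sharper intermediate bound $\|q^\dag-\qda\|_{\mathcal X^2_{u^\dag}(\Omega)^*}\le C\min\{h+h^{-1}(\delta+\sqrt\alpha),1\}$, which implies \eqref{discrete:w1p} via $\min\{a+b,1\}\le a+\min\{b,1\}$.

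It also sidesteps two technical points in the paper's Step 2:
\begin{itemize}
\item $P_h$ maps into $S_h$ rather than $S_h^0$, so $P_h v$ is not literally an admissible test function in the discrete problem;
\item the paper needs the $H^1$-stability of $P_h$ to bound $\|\nabla P_h v\|_{L^2}$.
\end{itemize}

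\textbf{What the paper's route buys.} It does not need an $H^1$ finite element error estimate for $u(q)-u_h(q)$ at the unknown, data-dependent coefficient $q=\qda$. It only uses approximation results at $q^\dag$ and $\mathcal I_h q^\dag$ (Lemma \ref{lemma:FEM:app2}) together with the uniform bounds \eqref{opM:error:H1:2}. So it would survive in settings where an error estimate uniform over the whole admissible set is unavailable. Since Lemma \ref{thm:fem:app} does provide that uniformity here, your argument is fully justified.
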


		\begin{proof} 
			The proof is divided into the   following steps:\\
			
			\noindent (Step 1.) Let us prove the existence of a constant $C>0$,  independent of $h,\delta,$ and $\alpha$, such that
            \begin{equation}\label{first:error:coro1}
				\|u_h(\qda)-P_hu(q^\dag)\|_{L^2(\Omega)}\leq C\eta = C(h^2+\delta+\sqrt{\alpha})  \quad \forall h \in(0,h_0) \  \forall \alpha,\delta>0.
			\end{equation}
            To this aim, let  $h \in(0,h_0)$ and $ \alpha,\delta>0$ be arbitrarily fixed. Since $\qda \in \mathcal A_h$ is a minimzer of \eqref{tik dis}, we obtain from  \eqref{preserve:A}   that 
			\begin{equation}\label{mini0}
				\begin{split}
				    &\quad \, \|u_h(q^{\delta}_{h,\alpha})-y^\delta\|_{L^2(\Omega)}^2+ \alpha (\|\qda\|^2_{L^2(\Omega)}+\|\nabla \qda\|_{L^2(\Omega)}^2)= \mathcal{J}^\delta_{\alpha,h}(q^{\delta}_{h,\alpha}) \\
                    &\le \mathcal{J}^\delta_{\alpha,h}(\mathcal I_h q^\dag) =  \|u_h(\mathcal I_h q^\dag)-y^\delta\|_{L^2(\Omega)}^2+ \alpha(\|\mathcal I_h q^\dag\|^2_{L^2(\Omega)} +\|\nabla \mathcal I_h q^\dag\|_{L^2(\Omega)}^2). 
				\end{split}
			\end{equation}
			By the triangle inequality,  \eqref{noisy},\eqref{Pih:grad:stability}, \eqref{preserve:A}, and \eqref{l2:error2},  the above right hand side can be estimated by
			\begin{equation}\label{first:error1.5}
				\begin{split}
					&\quad \,  \big\|u_h(\mathcal I_h q^\dag)-y^\delta\big\|_{L^2(\Omega)}^2+ \alpha(\|\mathcal I_h q^\dag\|^2_{L^2(\Omega)}+  \big\|\nabla \mathcal I_h q^\dag\big\|_{L^2(\Omega)}^2)\\
					&\leq 2 \big\|u_h(\mathcal I_h q^\dag)-u(q^\dag)\big\|_{L^2(\Omega)}^2+2 \big\| u(q^\dag)-y^\delta \big\|_{L^2(\Omega)}^2
					+\alpha(\|\mathcal I_h q^\dag\|^2_{L^2(\Omega)} +\big\|\nabla \mathcal I_h q^\dag \big\|_{L^2(\Omega)}^2)\\
		&\!\!\!\!\!\!\!\!\underbrace{\leq}_{\eqref{noisy},\eqref{l2:error2}} \!\!\!\! C(h^{4}+ \delta^{2})+ \alpha(\|\mathcal I_h q^\dag\|^2_{L^2(\Omega)} +\big\|\nabla \mathcal I_h q^\dag\big\|_{L^2(\Omega)}^2) \underbrace{\leq}_{\eqref{Pih:grad:stability},\eqref{preserve:A}} C(h^{4}+ \delta^{2} + \alpha )  \le C \eta^2
				\end{split}
			\end{equation} 
			with a constant $C>0$,  independent of $h,\delta,$ and $\alpha$. Applying  the above estimate to \eqref{mini0} implies
			\begin{equation}\label{first:error2}
				\|u_h(q^{\delta}_{h,\alpha})-y^\delta\|_{L^2(\Omega)}^2+ \alpha (\|\qda\|^2_{L^2(\Omega)}+\|\nabla \qda\|_{L^2(\Omega)}^2)  \leq   C\eta^2. 
			\end{equation}
			 Now, using the triangle inequality, the desired estimate \eqref{first:error:coro1} is obtained as follows:  
			\begin{align}
				&  \|u_h(\qda)-P_hu(q^\dag)\|_{L^2(\Omega)}\leq  \|u_h(\qda)-y^\delta\|_{L^2(\Omega)} +\|y^\delta- u(q^\dag)\|_{L^2(\Omega)} + 
				\|u(q^\dag)-P_hu(q^\dag)\|_{L^2(\Omega)}\\
				&  \underbrace{\le}_{\eqref{noisy},\eqref{L2projection},\eqref{first:error2}} C\eta+ \delta+ h^2 \|u(q^\dag)\|_{H^{2}(\Omega)} \le (C+1+ \|u(q^\dag)\|_{H^{2}(\Omega)})\eta. 
			\end{align}
As a consequence of \eqref{first:error:coro1}, $u^\dag=u(q^\dag) \in H^2(\Omega)$, and \eqref{L2projection}, we conclude that \eqref{discrete u:1p} is valid.\\
			
            \noindent {(Step 2.)}  Let us prove the existence of a constant $C>0$,  independent of $h,\delta,$ and $\alpha$, such that
			\begin{equation}\label{discrete:w1p}
				\|q^\dag -\qda \|_{{\mathcal{X}^{2}_{u^\dag}(\Omega)^*} }\leq C ( \eta+h+\min\left\{ h+ h^{-1}(\delta + \sqrt{\alpha}) ,1\right\}).
			\end{equation}
			To this end, let   $h \in(0,h_0)$ and $ \alpha,\delta>0$ be arbitrarily fixed.  We notice  that   the variational formulations of $u(q^\dag)$  and   $u_h(\qda)$
            $$ 
			\begin{aligned}
            a(u(q^\dag), v)+(q^\dag u(q^\dag)^m, v)_{L^2(\Omega)}&=( f, v)_{L^2(\Omega)} \quad\forall v\in H^1_0(\Omega)\\
			    	a(u_h(\qda), v_h)+(\qda u_h(\qda)^m, v_h)_{L^2(\Omega)}&=( f, v_h)_{L^2(\Omega)} \quad\forall v_h\in S_{h}^0
			\end{aligned}
            $$
			imply
			\begin{equation}\label{discre:cse:1}
				\begin{split}
					 \int_{\Omega} (q^\dag-\qda) u(q^\dag)^m v_h\,  dx 
					= a(u_h(\qda)-u(q^\dag), v_h)  +(\qda (u_h(\qda)^m-u(q^\dag)^m),v_h)_{L^2(\Omega)} \quad \forall v_h\in S_h^0. 
				\end{split}
			\end{equation}
			Now, let $v\in H^1_0(\Omega)$ be arbitrarily fixed. 		Then, applying the identity 
			\begin{align}
				\int_{\Omega} (q^\dag-\qda) u(q^\dag)^m v\, dx
				=  \int_{\Omega} (q^\dag-\qda) u(q^\dag)^m P_h v \, dx  +\int_{\Omega} (q^\dag-\qda) u(q^\dag)^m (I-P_h)v \, dx
			\end{align}
			to \eqref{discre:cse:1} with $v_h = P_h v$ yields that 
			\begin{equation}\label{discre:cse:2}
				\begin{split}
					\int_{\Omega} (q^\dag-\qda) u(q^\dag)^m v \,  dx
					&=a(u_h(\qda)-u(q^\dag), P_h v) +(\qda (u_h(\qda)^m-u(q^\dag)^m),P_h v)\\
					& \quad\, + \int_{\Omega} (q^\dag-\qda) u(q^\dag)^m (I-P_h)v\,  dx=: {\bf J_1}+{\bf J_2}+{\bf J_3}. 
				\end{split}
			\end{equation}
			In the sequel, we estimate these three terms separately. To estimate ${\bf J_1}$, we first deduce from  \eqref{L2projection} and the inverse estimate  \eqref{inverse:property}   that  
			\begin{equation}\label{discre:cse:3}
				\begin{split}
					{\bf J_1} 
					&\leq \|\nabla (u_h(\qda)-u(q^\dag))\|_{L^2(\Omega)}\|\nabla P_h v\|_{L^{2}(\Omega)} \\ 
                    &\leq C  \big( \|\nabla (u_h(\qda)-P_h u(q^\dag))\|_{L^2(\Omega)}
					  +\|\nabla (P_h u(q^\dag)-u(q^\dag))\|_{L^2(\Omega)}\big)\|v\|_{H^{1}(\Omega)}\\
					&\leq C  \big( h^{-1}\|(u_h(\qda)-P_h u(q^\dag))\|_{L^2(\Omega)}+h\|u(q^\dag)\|_{H^{2}(\Omega)}\big)\|v\|_{H^{1}(\Omega)}  \\
                    &\!\!\!\underbrace{\leq}_{\eqref{first:error:coro1}} C\left(h+ h^{-1}(\delta + \sqrt{\alpha})\right)\|v\|_{H^{1}(\Omega)}.   
				\end{split}
			\end{equation}
			On the other hand, Lemma \ref{thm:fem:app} along with $\qda\in \mathcal{A}$ yields that $\|u_h(\qda)\|_{H^1(\Omega)}\leq C_{\mathcal{A}}$,  and consequently
			$$
			\|\nabla(u_h(q)-u(q^\dag))\|_{L^2(\Omega)}\leq   C_{\mathcal{A}} +\|u(q^\dag)\|_{H^{1}(\Omega)}.
			$$
			Combined with \eqref{discre:cse:3}, this estimate shows that 
			\begin{equation}\label{discre:cse:J}
				{\bf J_1} \leq C\min\left\{ h+ h^{-1}(\delta + \sqrt{\alpha}) ,1\right\}\|v\|_{H^{1}(\Omega)}. 
			\end{equation}
			For the second term  ${\bf J_2}$, we deduce that
			\begin{equation}\label{discre:cse:5}
				\begin{split}
					{\bf J_2}&\leq C  \| u_h(\qda)^m-u(q^\dag)^m\|_{L^2(\Omega)}\|P_h v\|_{L^{2}(\Omega)} \\
                    &\!\! \underbrace{\leq}_{\eqref{difference binomial}} \|\sum_{k=0}^{m-1}u_h(\qda)^{m-1-k} u(q^\dag)^k \|_{L^\infty(\Omega)}  \| u_h(\qda)-u(q^\dag)\|_{L^2(\Omega)}\|P_h v\|_{L^{2}(\Omega)}\\
				&\!\!\!\!\!\!\!\underbrace{\leq}_{\eqref{opM:error:H1:2},\eqref{stability:Ritz and L2}} C (m+1) C_{\mathcal{A}}^{m-1}\|u_h(\qda)-u(q^\dag)\|_{L^2(\Omega)} \|v\|_{L^{2}(\Omega)}  \underbrace{\leq}_{\eqref{first:error:coro1}} C \eta \|v\|_{H^{1}(\Omega)}.
				\end{split}
			\end{equation}
			To estimate the final term ${\bf J_3}$, we utilize    \eqref{L2projection}  to obtain 
			\begin{equation}\label{discre:cse:6}
				{\bf J_3}\leq \| (q^\dag-\qda) u(q^\dag)^m \|_{L^2(\Omega)} \|(I-P_h) v\|_{L^{2}(\Omega)}\leq 2 \overline{q}\|u(q^\dag)\|^m_{C(\overline \Omega)}  C  h \|v\|_{H^{1}(\Omega)}.
			\end{equation} 
			Since $v\in H^1_0(\Omega)$  was chosen arbitrarily,  it follows from \eqref{discre:cse:2}, \eqref{discre:cse:J}, \eqref{discre:cse:5} and \eqref{discre:cse:6} that 
			\begin{equation}\label{discre:cse:7}
				\int_{\Omega} (q^\dag-\qda) u(q^\dag)^m v \,  dx\leq C \left( \eta+h+\min\left\{ h+ h^{-1}(\delta + \sqrt{\alpha}) ,1\right\}\right)\|v\|_{H^{1}(\Omega)} \quad\forall v\in  H^{1}_0(\Omega).
			\end{equation}
			Applying the same argument as in Theorem \ref{thm: cse} to \eqref{discre:cse:7} gives 
			the desired estimate \eqref{discrete:w1p}. \\
            
			\noindent {(Step 3.)} From \eqref{first:error2}, it follows that 
			\begin{equation} \label{final theo0}
				\big\|q^\dag-\qda \big\|_{H^{1}(\Omega)}\leq  \|q^\dag\|_{H^{1}(\Omega)}+\|\qda \big\|_{H^{1}(\Omega)}\leq   \|q^\dag\|_{H^{1}(\Omega)} +C \alpha^{-\frac{1}{2}} \eta. 
			\end{equation}
            Finally, invoking the interpolation inequality \eqref{Gamma:interp2} of Lemma \ref{lemma:W dual} with $p=2$, $\overline p= \infty$, and $\kappa= m\gamma+\gamma$ yields
\[\begin{split}
            \|q^\dag-\qda\|_{L^{2}(\Omega)}&\leq
				C\|q^\dag-\qda\|_{\mathcal{X}^{2}_{u^\dag}(\Omega)^*}^{\frac{1}{1+\kappa}}\|q^\dag-\qda\|_{H^1(\Omega)}^\frac{\kappa}{1+\kappa} \\
                &\le C ( \eta+h+\min\left\{ h+ h^{-1}(\delta + \sqrt{\alpha}) ,1\right\})^{\frac{1}{1+\kappa}} (1+\alpha^{-\frac{1}{2}} \eta)^\frac{\kappa}{1+\kappa}\\
                &= C ( \eta+h+\min\left\{ h+h^{-1}(\delta + \sqrt{\alpha}) ,1\right\})^{\frac{1}{1+(m+1)\gamma}} (1+\alpha^{-\frac{1}{2}} \eta)^\frac{(m+1)\gamma}{1+(m+1)\gamma}.
                \end{split}
 \]          
Thus, the claim is valid. 
			\end{proof}
		{
		\begin{remark}\label{remark:estimator}
			Under the settings  $h\sim \sqrt{\delta}$ and $\sqrt{\alpha} \sim \delta$, Theorem \ref{final theo} implies the following error estimates
			$$
             \|u^\dag-u(\qda)\|_{L^2(\Omega)}\leq C \delta \quad \textrm{and} \quad 
             \quad  \|q^\dag -\qda\|_{L^{2}(\Omega)}\leq C \delta^{\frac{1}{2(1+(m+1)\gamma)}}.
            $$

\end{remark}
            }
		\begin{remark}\label{remark:comp:FEM}
        Theorem \ref{final theo} significantly improves the existing error estimate by Jin et al. \cite{jin2022convergence} for the linear case $m=1$ with regards to two aspects. First, as pointed out in Remark \ref{H1 ass only}, the error estimation \eqref{discrete:1p} is readily obtained under the $H^1(\Omega)$-regularity assumption on the true coefficient, omitting the $H^2(\Omega)$-regularity assumption from the prior contribution. Secondly, the achieved error estimate \eqref{discrete:1p} improves the convergence order \cite[Corollary 3.2]{jin2022convergence} to the power of two. This improved convergence rate was readily observed in the numerical tests by \cite{jin2022convergence}. Our numerical tests (see below) confirm our theoretical finding. 
        \end{remark}

		\section{Numerical simulation and discussions}\label{sec:numerics} 
		In this section, we present two numerical tests to validate our theoretical analysis.  All implementations were done in Python using the open-source FEniCS \cite{logg2010dolfin} on a Mac mini (Apple M4 Pro chip, 24 GB RAM) running macOS.  Furthermore, the discrete minimization problem \eqref{tik dis} is solved by the BFGS-B algorithm available through the \texttt{scipy.optimize} package in Python with the initial guess $q_0\equiv 1.0$. Regarding the data for our tests, we set $\sigma \equiv 1$, $\underline q= 0$, $\overline q =2$, $m=1$, and the noisy data $y^\delta$ is generated by 
		$$
		y^\delta = u^\dag + \varepsilon \|u^\dag\|_{L^\infty(\Omega)}\xi
		$$
		with $\xi$ being the standard uniform distribution. Two scenarios for the true coefficient $q^\dag$ and the corresponding true solution $u^\dag$ are considered as follows:

		\begin{example}\label{numer:example}
			~\begin{enumerate}
				
				\item[\textup{(a)}] $\Omega :=(0,1)$, $u^\dag = \sin (\pi x)$, $q^\dag(x)=1.5-|x-0.5|$. 
				
				\item[\textup{(b)}] $\Omega :=(0,1)^2$, $u^\dag = \sin(\pi x_1)\sin(\pi x_2)$, $q^\dag(x):=1+ 0.5(1 - 2\max\{|x_1- 0.5|, |x_2 -0.5|\})$.
				
			\end{enumerate}
			
		\end{example}
        \noindent
 Note that $q^\dag\in H^1(\Omega)\backslash H^2(\Omega)$ holds in both cases of Example \ref{numer:example}. The discretization for Example \ref{numer:example} uses a uniform mesh of mesh size $h$. Moreover, the noisy level and the regularization parameter are set to be 
\[
\begin{cases}
\delta = h^2 \quad \text{and} \quad \alpha =10^{-2}\times \delta^2 & \text{for case (a)}, \\
\delta = h^2 \quad \text{and} \quad \alpha = 5 \times\delta^2 & \text{for case (b)}.
\end{cases}
\]
This choice corresponds to Remark \ref{remark:estimator}.
 

	 Tables \ref{tab:a}-\ref{tab:b} depict the computed numerical error
		$$
        e_q(\delta):=\|u^\dag - u_h(\qda)\|_{L^2(\Omega)} \quad \textrm{and} \quad  e_u(\delta):=\|q^\dag - \qda\|_{L^2(\Omega)}
        $$
        as well as the corresponding experimental order of convergence 
\begin{equation}
 \textup{ECO}_{u}:=\frac{\log(e_u(\delta_1))-\log(e_u(\delta_1))}{\log\delta_1 -\log\delta_2}\quad\textup{and}\quad
 \textup{ECO}_{q}:=\frac{\log(e_q(\delta_1))-\log(e_q(\delta_1))}{\log\delta_1 -\log\delta_2}\ 
\end{equation}
for two consecutive noise level $\delta_1$ and $\delta_2$. 
        As pointed out in Remark \ref{remark:estimator}, our theoretical finding predicts $\textup{ECO}_u=1$ at best and $\textup{ECO}_q=0.5$ at best, respectively. The numerical results for the one-dimensional and two-dimensional cases of Example \ref{numer:example}  confirm these predictions. A clear convergence of the errors $e_u(\delta)$ and $e_q(\delta)$ is observed as $\delta \to 0$, with the experimental order of convergence closely matching the theoretical ones.  
		Finally,  the reconstructed coefficient $\qda$ is visually demonstrated in the plots (Figs. \ref{fig:three_cases} and  \ref{fig:q_comparison}), which illustrate its convergence to $q^\dag$ as  $h \to  0$.
\begin{table}[htbp]
    \centering
     \caption{Examples \ref{numer:example} (a): Convergence behavior  of numerical solutions   }
    \begin{tabular}{|c|c|c|c|c|c|}
        \hline
        $h$ & $\delta$ & $e_u(\delta)$ & $\textup{EOC}_u$ & $e_q(\delta)$ & $\textup{EOC}_q$ \\
        \hline
        $1.56\times10^{-2}$ & $2.44\times10^{-4}$ & $9.34\times10^{-5}$ & -- & $7.12\times10^{-2}$ & -- \\
        $7.81\times10^{-3}$ & $6.1\times10^{-5}$ & $2.59\times10^{-5}$ & 0.925 & $3.28\times10^{-2}$ & 0.560 \\
        $3.91\times10^{-3}$ & $1.53\times10^{-5}$ & $6.2\times10^{-6}$ & 1.031 & $1.31\times10^{-2}$ & 0.664 \\
        $1.95\times10^{-3}$ & $3.81\times10^{-6}$ & $1.55\times10^{-6}$ & 1.002 & $6.26\times10^{-3}$ & 0.530 \\
        \hline
    \end{tabular}
    \label{tab:a}
\end{table}

\begin{table}[htbp]
    \centering
     \caption{Examples \ref{numer:example} (b): Convergence behavior of numerical solutions}
    \begin{tabular}{|c|c|c|c|c|c|}
        \hline
        $h$ & $\delta$ & $e_u(\delta)$ & $\textup{EOC}_u$ & $e_q(\delta)$ & $\textup{EOC}_q$ \\
        \hline
        $1.41\times10^{-1}$ & $2.00\times10^{-2}$ & $3.44\times10^{-3}$ & -- & $2.41\times10^{-1}$ & -- \\
        $6.43\times10^{-2}$ & $4.13\times10^{-3}$ & $9.42\times10^{-4}$ & 0.822 & $1.14\times10^{-1}$ & 0.474 \\
        $4.16\times10^{-2}$ & $1.73\times10^{-3}$ & $3.75\times10^{-4}$ & 1.059 & $7.14\times10^{-2}$ & 0.537 \\
        $2.83\times10^{-2}$ & $8.00\times10^{-4}$ & $1.77\times10^{-4}$ & 0.972 & $5.16\times10^{-2}$ & 0.420 \\
        \hline
    \end{tabular}
    \label{tab:b}
\end{table}

		\begin{figure}[htbp]
			\centering
            	\caption{Exact and recovered coefficients for Example \ref{numer:example} (a) }
			\begin{subfigure}{0.32\textwidth}
				\centering
				\includegraphics[width=\linewidth]{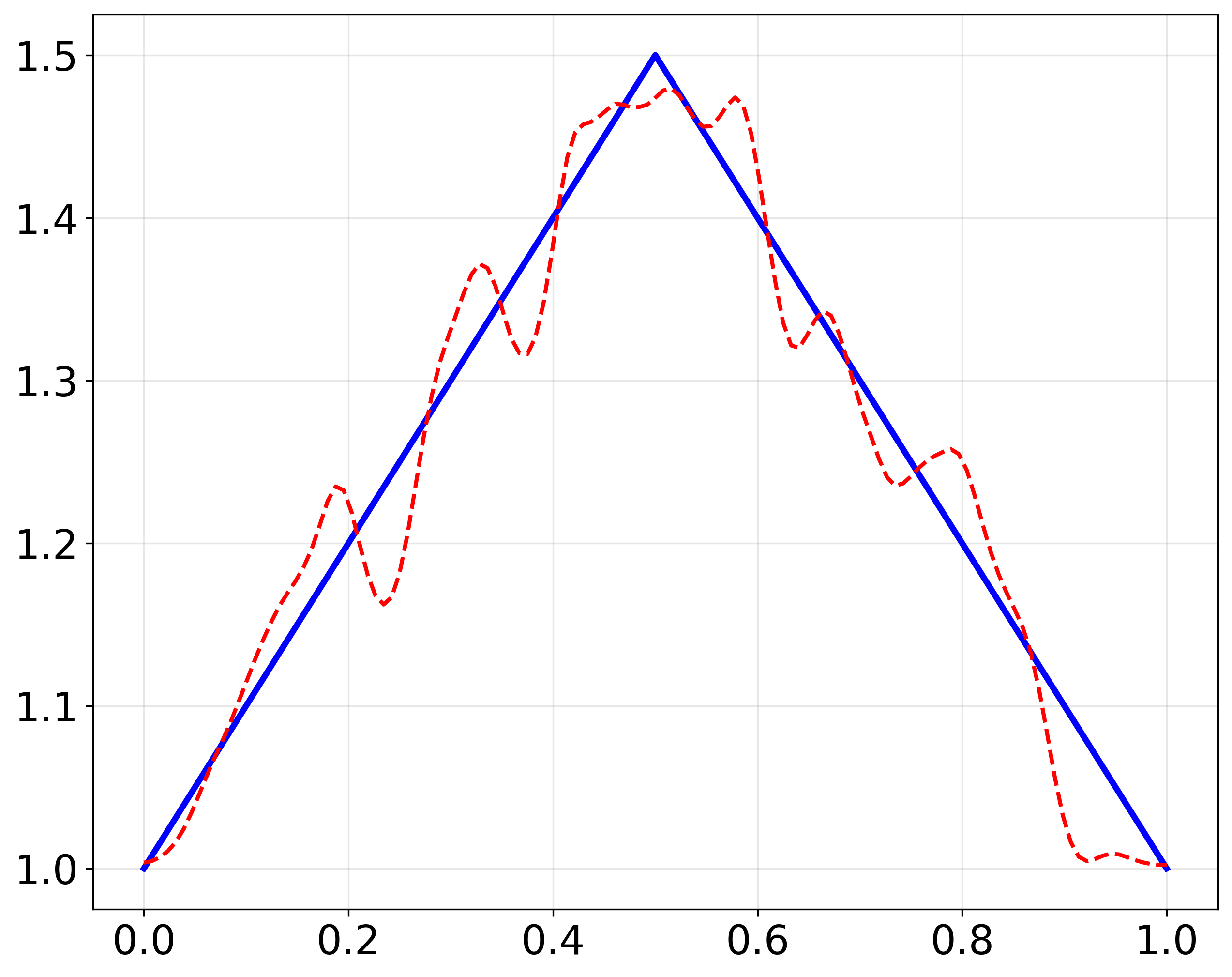}				\caption*{$h=7.81\times10^{-3}\,(N=128)$}
				\label{fig:case_80}
			\end{subfigure}
			\hfill
			\begin{subfigure}{0.32\textwidth}
				\centering
				\includegraphics[width=\linewidth]{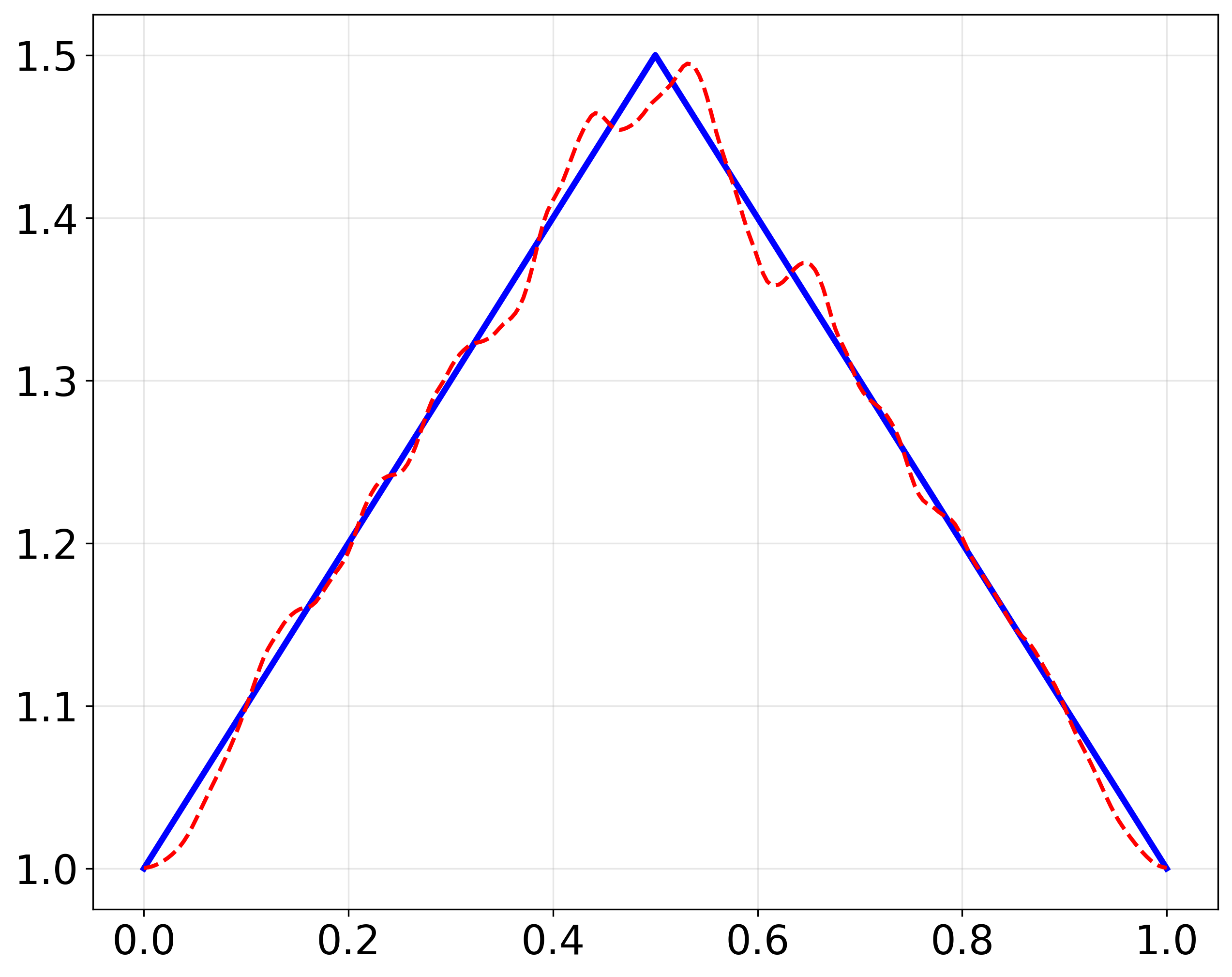}
				\caption*{$h=3.91\times10^{-3}\,(N=256)$}
				\label{fig:case_160}
			\end{subfigure}
			\hfill
			\begin{subfigure}{0.32\textwidth}
				\centering
				\includegraphics[width=\linewidth]{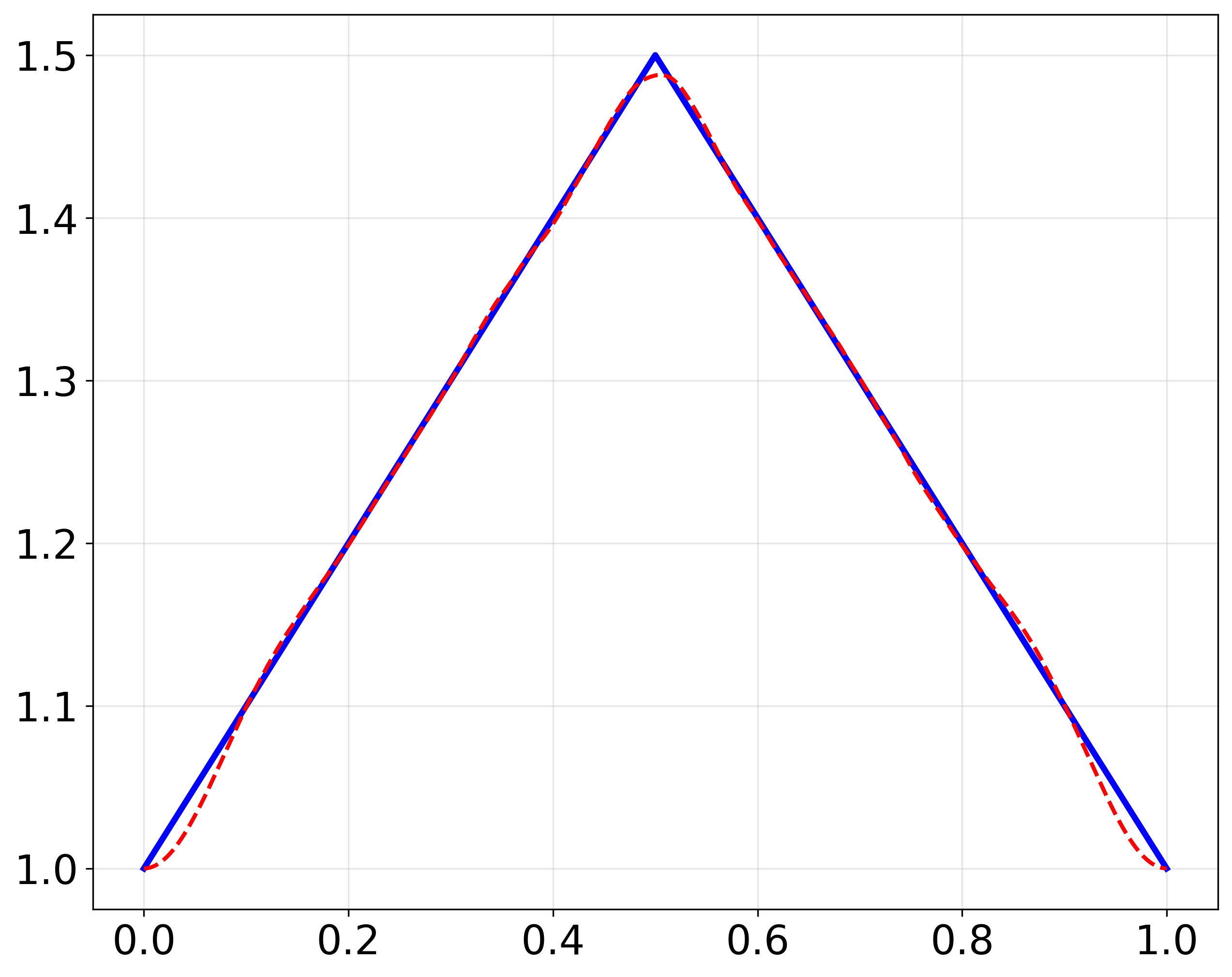}
				\caption*{$h=1.95\times10^{-3}\,(N=512)$}
				\label{fig:case_400}
			\end{subfigure}
		
			\label{fig:three_cases}
		\end{figure}

	\begin{figure}[htbp]\label{fig1}
    \centering
     \caption{Exact and recovered coefficients for Example \ref{numer:example} (b)}
    \begin{subfigure}{0.32\textwidth}
        \centering
        \includegraphics[width=\textwidth]{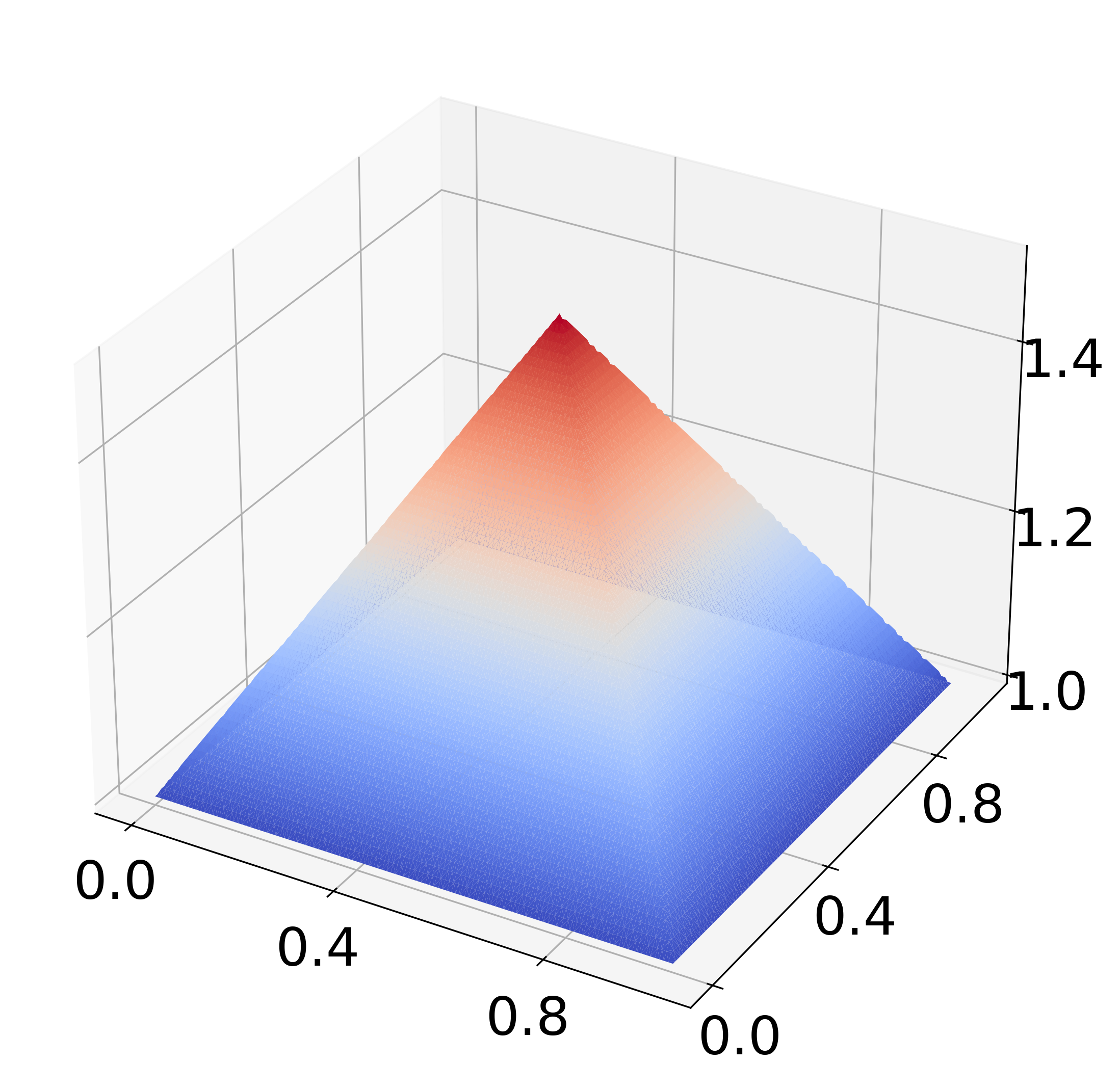}
        \caption*{{Exact coefficient $q^\dag$}}
        \label{fig:q_exact}
    \end{subfigure}
    \hfill
    \begin{subfigure}{0.32\textwidth}
        \centering
        \includegraphics[width=\textwidth]{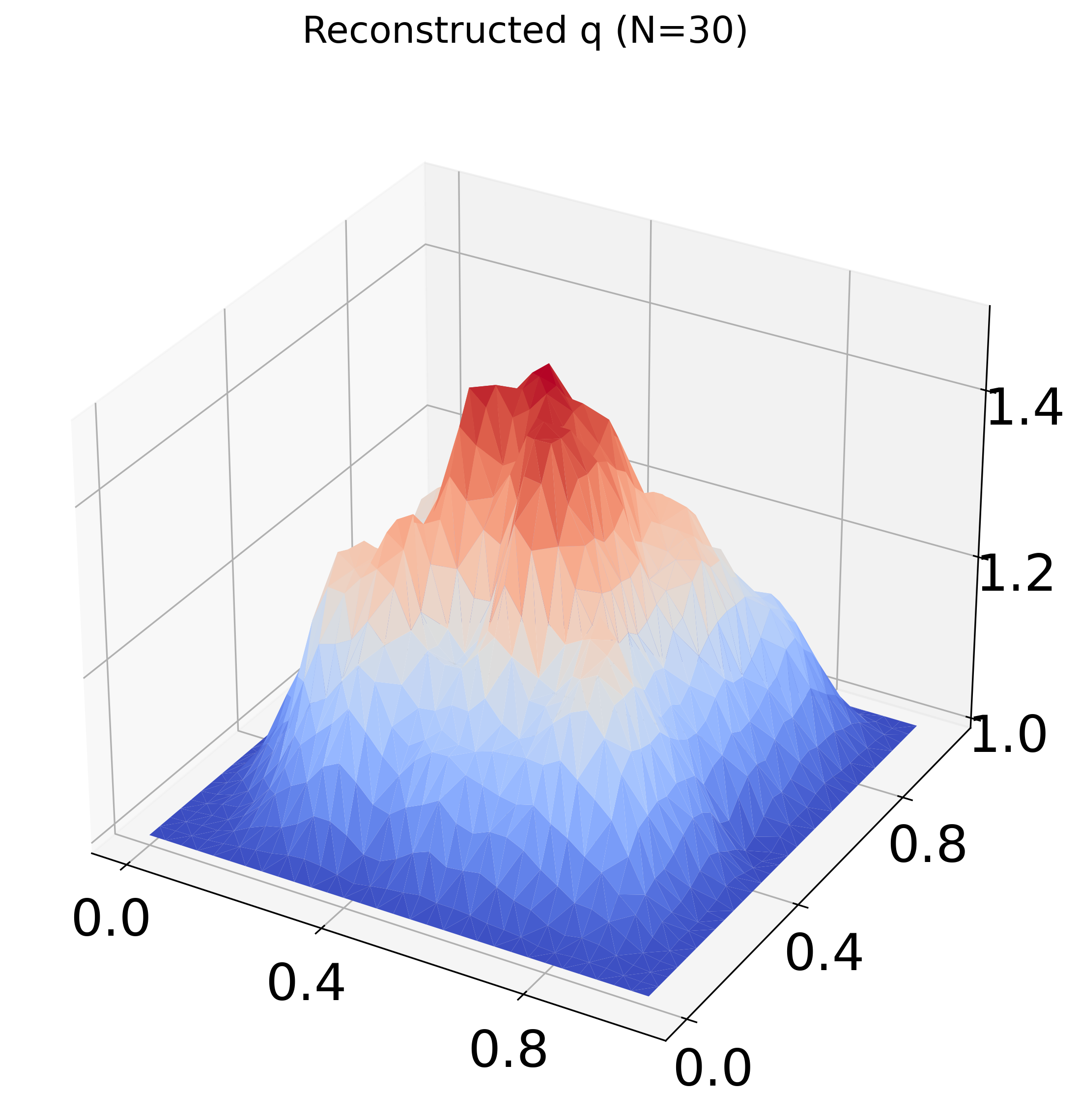}
        \caption*{Recovered coefficient  $h=4.16\times 10^{-2}$ ($N=34$)}
        \label{fig:q_inv_20}
    \end{subfigure}
    \hfill
    \begin{subfigure}{0.32\textwidth}
        \centering
        \includegraphics[width=\textwidth]{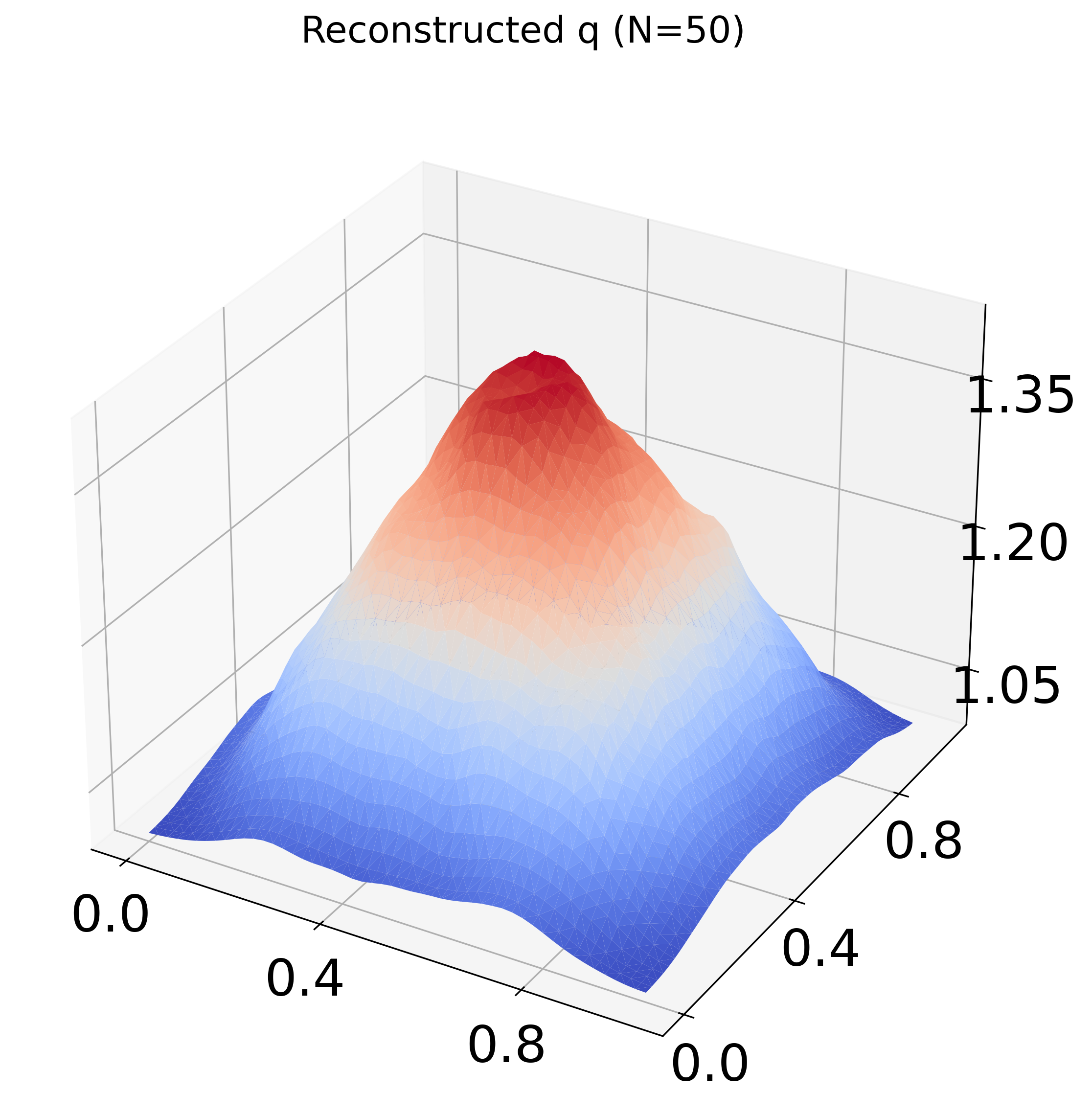}
        \caption*{Recovered coefficient   $h=2.83\times 10^{-2}$ ($N=50$)}
        \label{fig:q_inv_30}
    \end{subfigure}
    \label{fig:q_comparison}
\end{figure}

\newpage

		\noindent\textbf{Acknowledgments.} The work of De-Han Chen and Irwin Yousept are supported by  DFG research grant YO 159/5-1, project number: 513566305.
		Yi-Hsuan Lin is partially supported by the National Science and Technology Council (NSTC), Taiwan, under the project 113-2115-M-A49-017-MY3. Y.-H. L. acknowledges financial supports from the Alexander von Humboldt Foundation through the Henriette Herz Scouting Programme and hosted by Universit\"at Duisburg-Essen		

		\bibliography{refs} 
		
		\bibliographystyle{siam}
		
	\end{document}